\pdfoutput=1
\documentclass[10pt]{article} 




\usepackage{amsfonts}
\usepackage{amsmath}
\usepackage{amsthm}
\usepackage{amssymb}

\usepackage{mdframed} 
\usepackage{thmtools}

\definecolor{shadecolor}{gray}{0.9}
\declaretheoremstyle[
headfont=\normalfont\bfseries,
notefont=\mdseries, notebraces={(}{)},
bodyfont=\normalfont,
postheadspace=0.5em,
spaceabove=1pt,
mdframed={
  skipabove=8pt,
  skipbelow=8pt,
  hidealllines=true,
  backgroundcolor={shadecolor},
  innerleftmargin=4pt,
  innerrightmargin=4pt}
]{shaded}

\declaretheorem[style=shaded,within=section]{definition}
\declaretheorem[style=shaded,sibling=definition]{theorem}

\declaretheorem[style=shaded,sibling=definition]{assumption}

\declaretheorem[sibling=definition]{remark}
\declaretheorem[sibling=definition]{example}
\declaretheorem[style=shaded,sibling=definition]{lemma}


\usepackage{colordvi,epsfig}
\usepackage[table]{xcolor}   
\usepackage{algorithm}
\usepackage[noend]{algpseudocode}

\usepackage{verbatim,tikz}
\usepackage{caption,subcaption}   
\usepackage[maxbibnames=99, firstinits=true,doi=false,isbn=false,url=false,backend=bibtex]{biblatex}	

\bibliography{./jabref/jabref_library}

\graphicspath{{./figures/}}  
\usepackage{fullpage}

\newcommand{\ignore}[1]{}
\newcommand{\R}{\mathbb{R}}


      
\newcommand{\iu}{{i\mkern1mu}}

\newcommand{\eqdef}{\overset{\text{def}}{=}} 
\newcommand{\diag}{{\rm Diag}} 
\newcommand{\support}{{\rm supp}} 


\newcommand{\cD}{{\cal D}}

\newcommand{\cG}{{\cal G}}

\newcommand{\cL}{{\cal L}}

\newcommand{\cN}{{\cal N}}
\newcommand{\cO}{{\cal O}}

\newcommand{\mA}{{\bf A}}
\newcommand{\mB}{{\bf B}}

\newcommand{\mD}{{\bf D}}

\newcommand{\mH}{{\bf H}}
\newcommand{\mI}{{\bf I}}
\newcommand{\mJ}{{\bf J}}

\newcommand{\mM}{{\bf M}}
\newcommand{\mN}{{\bf N}}

\newcommand{\mP}{{\bf P}}

\newcommand{\mR}{{\bf R}}
\newcommand{\mS}{\mathbf{S}}

\newcommand{\mW}{{\bf W}}
\newcommand{\mX}{{\bf X}}
\newcommand{\mY}{{\bf Y}}
\newcommand{\mZ}{{\bf Z}}

\newcommand{\Proj}{{\bf \Pi}} 
\newcommand{\Jac}{{ \bf \nabla F}} 
\newcommand{\ones}{e}

\newcommand{\EE}[2]{\mathbb{E}_{#1}\left[#2\right] }
\newcommand{\E}[1]{\mathbb{E}\left[#1\right] } 
\newcommand{\ED}[1]{\mathbb{E}_{\cD}\left[#1\right] } 

\newcommand{\Prb}[1]{\mathbb{P}\left[#1\right] }

\newcommand{\norm}[1]{\left \| #1 \right\|}
\newcommand{\dotprod}[1]{\left< #1\right>}
\newcommand{\Tr}[1]{\mbox{Tr}\left( #1\right)}

\providecommand{\Range}[1]{\mbox{Range}\left( #1\right)}

\usepackage[colorinlistoftodos,bordercolor=orange,backgroundcolor=orange!20,linecolor=orange,textsize=scriptsize]{todonotes}

\newcommand{\LGconst}{L^{\cG}_{\max}}
\newcommand{\LGi}{L^{\cG}_i}

\linespread{1.2}

\usepackage{hyperref}

\title{\bf Stochastic Quasi-Gradient Methods: \\ Variance Reduction  via Jacobian Sketching\footnote{
The first  results of this  paper were obtained in Fall 2015 and most key results were obtained by Fall 2016. All key results were obtained by Fall 2017. The first author gave a series of talks on the results (before the paper was released online) in November 2016 (Machine learning seminar at T\'{e}l\'{e}com ParisTech), December 2016 (CORE seminar, Universit\'{e} catholique de Louvain), March 2017 (Optimization, machine learning, and pluri-disciplinarity workshop,  Inria Grenoble - Rhone-Alpes), May 2017 (SIAM Conference on Optimization, Vancouver), September 2017 (Optimization 2017, Faculdade de Ciencias of the Universidade de Lisboa), and November 2017 (PGMO Days 2017, session on Continuous Optimization for Machine Learning, EDF'Lab Paris-Saclay). 
}}

\author{Robert M. Gower\footnote{T\'{e}l\'{e}com ParisTech, France.}  $\qquad$ Peter Richt\'arik\footnote{King Abdullah University of Science and Technology (KAUST), Saudi Arabia --- University of Edinburgh, United Kingdom --- Moscow Institute of Physics and Technology (MIPT), Russia.} $\qquad$  Francis Bach\footnote{INRIA - ENS - PSL Research University, France. 
}}

\date{April 25, 2018}

\begin{document}
\maketitle

\begin{abstract}
We develop a new family of  variance reduced stochastic gradient descent methods for minimizing the average of a very large number of smooth functions. Our method---JacSketch---is motivated by novel developments in randomized numerical linear algebra, and operates by maintaining a stochastic estimate of a Jacobian matrix composed of the gradients of individual functions. In each iteration, JacSketch  efficiently updates the Jacobian matrix by first obtaining a random linear measurement of the true Jacobian through (cheap) sketching, and then projecting the previous estimate onto the solution space of a linear matrix equation whose solutions are consistent with the measurement. The Jacobian estimate is then used to compute a variance-reduced unbiased estimator of the gradient, followed by  a stochastic gradient descent step.  Our strategy is analogous to the way quasi-Newton methods maintain an estimate of the Hessian, and hence our method can be seen as a {\em stochastic quasi-gradient method}.  Indeed, quasi-Newton methods project the current Hessian estimate onto a solution space of a linear equation consistent with a certain linear (but non-random) measurement of the true Hessian.  Our method can also be seen as stochastic gradient descent applied to a  {\em controlled stochastic optimization reformulation} of the original problem, where the control comes from the Jacobian estimates.

We prove that for smooth and strongly convex functions, JacSketch converges linearly with a meaningful rate dictated by a single  convergence theorem which applies to general sketches. We also provide a refined convergence theorem which applies to a smaller class of sketches, featuring a novel proof technique  based on a  {\em stochastic Lyapunov function}. This  enables us to obtain sharper complexity results for variants of JacSketch with importance sampling. By specializing our general approach to specific  sketching strategies, JacSketch reduces to  the celebrated stochastic average gradient (SAGA) method, and its several existing and many new minibatch, reduced memory, and importance sampling variants. Our rate for SAGA with importance sampling is the current best-known rate for this method, resolving a conjecture by Schmidt et al (2015).  The rates we obtain for minibatch SAGA are also superior to  existing rates. Moreover, we obtain the first  minibatch SAGA method with importance sampling.
\end{abstract}

\newpage

\tableofcontents

\newpage
\section{Introduction}\label{sec:intro}

We consider the problem of minimizing the average of a large number of differentiable functions
\begin{equation} \label{eq:prob}
x^*=\arg\min_{x \in \R^d} \left[f(x) \eqdef \frac{1}{n}\sum_{i=1}^n f_i(x) \right],
\end{equation}
where $f$ is $\mu$--strongly convex and $L$--smooth. 
In solving~\eqref{eq:prob}, we restrict our attention to 
first-order methods that use a (variance-reduced) stochastic estimate of the gradient $g^k \approx \nabla f(x^k)$  to take a step towards minimizing~\eqref{eq:prob} by iterating
\begin{equation} \label{eq:xupdate}x^{k+1} = x^k - \alpha g^{k},\end{equation}
where $\alpha>0$ is a  stepsize.

In the context of machine learning, \eqref{eq:prob} is an abstraction of the {\em empirical risk minimization} problem; $x$ encodes the parameters/features of a (statistical) model,  and $f_i$ is the loss of example/data point $i$ incurred by model $x$. The goal is to find the model $x$ which minimizes the average loss on the $n$ observations.

Typically, $n$ is so large that algorithms which rely on  scanning through all $n$ functions in each iteration are too costly. The need for incremental methods for the training phase of machine learning models has revived the interest in the  stochastic gradient descent (SGD) method~\cite{RobbinsMonro:1951}. SGD sets  $g^k=\nabla f_i(x^k)$, where $i$ is an index chosen from $[n]\eqdef \{1,2,\dots,n\}$ uniformly at random. SGD therefore requires only a single data sample to complete a step and make progress towards the solution. Thus SGD scales well in the number of data samples, which is important in several machine learning applications since there many be a large number of data samples. On the downside,  the variance of the stochastic estimates of the gradient produced by SGD does not vanish during the iterative process, which suggests that a decreasing stepsize regime needs to be put into place if SGD is to converge. Furthermore, for SGD to work efficiently, this decreasing stepsize regime needs to be tuned for each application area, which is costly.

\subsection{Variance-reduced methods}

 Stochastic variance-reduced versions of SGD offer a solution to this high variance issue, which improves the theoretical convergence rate and solves the issue with ad hoc stepsize regimes. The first variance reduced method for empirical risk minimization is the stochastic average gradient (SAG) method of Schmidt, Le Roux and Bach~\cite{SAG}. The analysis of SAG is notoriously difficult, which is perhaps due to the estimator of gradient  being biased. Soon afterwards,  the SAG gradient estimator was modified into an unbiased one, which resulted in the  SAGA method  \cite{SAGA_Nips}.  SAGA maintains a  matrix of the latest gradients computed for each datapoint $i$, and uses this matrix to construct a stochastic estimate of the gradient. The analysis of SAGA is dramatically simpler than that of SAG. Another  popular method is SVRG of Johnson and Zhang \cite{Johnson2013} (see also S2GD \cite{S2GD}). SVRG enjoys the same theoretical complexity bound as SAGA, but has a much smaller memory footprint. It is based on an inner-outer loop procedure. In the outer loop, a full pass over data is performed to compute the gradient of $f$ at the current point. In the inner loop, this gradient is modified with the use of cheap stochastic gradients, and steps are taken in the direction of the modified gradients.  A   notable recent addition to the family of variance reduced methods, developed by Nguyen et al~\cite{SARAH},  is known as SARAH. Unlike other methods, SARAH does not use an estimator that is unbiased in the last step. Instead, it is unbiased over a long history of the method.

A fundamentally different way of designing variance reduced methods is to use coordinate descent \cite{Richtarik2014a, PCDM2} to solve the dual. This is what the SDCA  method \cite{SDCA} and its various extensions \cite{dmSDCA} do. The key advantage of this approach is that 
the dual often has a seperable structure in the coordinate space, which in turn means that each iteration of coordinate descent is cheap. Furthermore, SDCA is a variance-reduced method by design since the coordinates of the gradient tend to zero as one approaches the solution.  One of the downsides of SDCA is that it requires calculating Fenchel duals and their derivatives. This issue was later solved by introducing approximations and mapping the dual iterates to the primal space as pointed out in~\cite{SAGA_Nips}. This resulted in primal variants of SDCA such as dual-free SDCA~\cite{dfSDCA}. A primal-dual variant which enables the use of arbitrary minibatch strategies was developped by Qu et al~\cite{Qu2015b}, and is known as QUARTZ.

\subsection{Gaps in our understanding of SAGA}

Despite  significant research into variance-reduced stochastic gradient descent methods for solving \eqref{eq:prob}, there are still big gaps in our understanding of variance reduction. For instance, the current theory supporting the  SAGA algorithm is far from complete. 

 SAGA with uniform probabilities  enjoys the iteration complexity $\cO ( (n+ \tfrac{L_{\max}}{\mu} ) \log \tfrac{1}{\epsilon})$, where $L_{\max} \eqdef \max_i L_i$ and $L_i$ is the smoothness constant of $f_i$. While importance sampling versions of SAGA have proved in practice to produce a speed-up over uniform SAGA~\cite{Schmidtnonuni},  a proof of this speed-up has been elusive. It was conjectured by Schmidt et al.\ \cite{Schmidtnonuni} that a properly designed importance sampling strategy for SAGA should lead to the rate $\cO( (n+ \tfrac{\bar{L}}{\mu} ) \log \tfrac{1}{\epsilon})$, where $\bar{L}=\tfrac{1}{n}\sum_i L_i$. However, no such result was proved. This rate is achieved by, for instance, importance sampling variants of SDCA and QUARTZ \cite{Qu2015b}. However, the analysis only applies to a more specialized version of problem \eqref{eq:prob} (e.g., one needs  an explicit strongly convex regularizer). 
 Second, existing minibatch variants of SAGA do not enjoy the same rate as that offered by methods such as SDCA and QUARTZ. Are the above  issues with  SAGA unavoidable, or  is it the case that our understanding of the method is far from complete? Lastly, no minibatch variant of SAGA with importance sampling is known. 
 
 One of the contributions of this paper is giving positive answers to all of the above questions.

\subsection{Jacobian sketching: a new approach to variance reduction} \label{sec:intro_jacsketch}

Our key contribution in this paper is the introduction of a   novel approach---which we call {\em Jacobian sketching}---to designing and understanding variance-reduced stochastic gradient descent methods for solving  \eqref{eq:prob}. We refer to our method by the name {\em JacSketch}.  We shall now briefly introduce some of the key insights motivating our our approach. 

Let $F:\R^d\to \R^n$ be defined by \begin{equation} \label{eq:F(x)}F(x) \eqdef (f_1(x), \ldots, f_n(x)) \in \R^n,\end{equation} and further let \begin{equation}\label{eq:jac_def}\Jac(x) \eqdef [\nabla f_1(x), \ldots, \nabla f_n(x)] \in \R^{d\times n},\end{equation} be the Jacobian of $F$ at $x$. 

The starting point of our new approach is the following trivial observation: the gradient of $f$ at $x$ can be computed from the Jacobian $\Jac(x)$ by a simple {\em linear transformation:}
\begin{equation}\label{eq:ubif98gf8}
\frac{1}{n}\Jac(x) \ones = \nabla f(x),
\end{equation}
where $\ones$ is the vector of all ones in $\R^n$. This alone is not useful to come up with a better way of estimating the gradient. Indeed, 
formula \eqref{eq:ubif98gf8} has two issues. First, the Jacobian is {\em not} available. If we wanted to compute it, we would need to pay the cost of one pass through the data. Second, even if the Jacobian was available, merely multiplying it by the vector of all ones would cost $\cO(nd)$ operations, which is again a cost equivalent to one pass over data.

Now, let us replace the vector of all ones in \eqref{eq:ubif98gf8} by $e_i \in \R^n$, the unit coordinate/basis vector in $\R^n$. If the index $i$ is chosen randomly from $[n]$, then \begin{equation} \label{eq:bu9d9f}\Jac(x)e_i = \nabla f_i(x),\end{equation} which is a stochastic gradient of $f$ at $x$. In other words, by performing a {\em random linear transformation} of the Jacobian, we have arrived at the classical stochastic estimate of the gradient. This approach does not suffer from the first issue mentioned above as the Jacobian is {\em not needed} at all in order to compute $\nabla f_i(x)$. Likewise, it does not suffer from the second issue; namely, the cost of computing the stochastic gradient is merely $\cO(d)$, and we can avoid a costly pass through the data.\footnote{For the purposes of this narrative it suffices to assume that stochastic gradients can be sampled at cost $\cO(d)$. } 

However, this approach suffers from a new issue: by constructing the estimate this way, we {\em do not learn} from the (random) information collected about the Jacobian in prior iterations, through having access to random linear transformations thereof.   In this paper we take the point of view  that this  is the reason why SGD suffers from large variance.  Our approach towards alleviating this problem is to maintain and update  an estimate $\mJ \in \R^{d \times n}$ of the Jacobian  $\Jac(x).$

Given $x^k \in \R^d$, ideally we would like $\mJ$ to satisfy 
\begin{equation}\label{eq:trivial_system} \mJ = \Jac(x^k),\end{equation}
that is, we would like it to be equal to the true Jacobian.
However, at the same time we do not wish to pay the price of computing it.  Hence, assuming we have an estimate $\mJ^k\in \R^{d\times n}$ of the Jacobian available, 
we instead pick a random matrix $\mS_k\in \R^{n\times \tau}$ from some distribution $\cD$ of matrices\footnote{We will not bother about the distribution from which it is picked at the moment. It suffices to say that virtually all distributions are supported by our theory. However, if we wish to obtain  a practical method, some distributions will make much more sense than others.} and consider the following {\em sketched} version of the linear system \eqref{eq:trivial_system}, with unknown  $\mJ $:
\begin{equation}\label{eq:sketching_equation} \mJ \mS_k = \Jac(x^k) \mS_k \in \R^{d\times \tau}.\end{equation}
This equation generalizes both \eqref{eq:ubif98gf8} and \eqref{eq:bu9d9f}. The left hand side contains the sketched system matrix $\mS_k$ and the unknown matrix $\mJ$, and the right hand side contains a quantity we can measure (through a random linear measurement of the Jacobian, which we assume is cheap). Of course, the true Jacobian solves \eqref{eq:sketching_equation}. However, in general, and in particular when $\tau \ll n$ which is the regime we want to be in for practical reasons, the system \eqref{eq:sketching_equation} will have infinite  $\mJ$ solutions.

We pick a unique solution $\mJ^{k+1}$ as the closest solution of~\eqref{eq:sketching_equation} to our previous estimate $\mJ^k$,  with respect to a weighted Frobenius norm with a positive definite weight matrix $\mW\in \R^{n\times n}$:
\begin{eqnarray} \mJ^{k+1}  = &&  \arg \min_{\mJ\in \R^{d\times n}} \|\mJ - \mJ^k\|_{\mW^{-1}} \label{eq:sketch_n_project}\\
 && \text{subject to} \quad \mJ \mS_k  =  \Jac(x^k) \mS_k, \nonumber
\end{eqnarray}
where 
\begin{equation} \label{eq:fro_norm}\norm{\mX}_{\mW^{-1}} \eqdef \sqrt{\Tr{ \mX \mW^{-1} \mX^\top}}.\end{equation}

In doing so, we have built  a learning mechanism whose goal is to maintain good estimates of the Jacobian throughout the run of method \eqref{eq:xupdate}. These  estimates can  be used to efficiently estimate the gradient by performing a linear transformation similar to \eqref{eq:ubif98gf8}, but with $\Jac(x)$ replaced by the latest estimate of the Jacobian.   In practice, it is important to design sketching matrices   so that the Jacobian sketch $\Jac(x)\mS_k$ can be calculated efficiently. 

The ``sketch-and-project'' strategy \eqref{eq:sketch_n_project}  for updating our Jacobian estimate is analogous to the way quasi-Newton methods update the estimate of the Hessian (or inverse Hessian) \cite{Goldfarb1970, Goldfarb1972, Gower2016, ASMI}. From this perspective, our method can be viewed as  a  {\em stochastic quasi-gradient  method}.\footnote{The term ``quasi-gradient methods'' was popular in the 1980s  \cite{Ermoliev1983}, and refers to  algorithms for solving certain stochastic optimization problems which rely on stochastic estimates of function values and their derivatives. In this paper  we give the term a different meaning by drawing a direct link with quasi-Newton methods.} 


Problem \eqref{eq:sketch_n_project} admits the explicit closed-form solution (see Lemma~\ref{lem:sol}): 
\begin{equation}\label{eq:jacobsolWintro}
\mJ^{k+1} = \mJ^{k} +(\Jac(x^k)-\mJ^{k}) \Proj_{\mS_k},
\end{equation}
where
\begin{equation}\label{eq:PSdef} \Proj_\mS \eqdef  \mS (\mS^\top \mW \mS)^{\dagger} \mS^\top \mW,\end{equation}
is  a projection matrix, and $\dagger$ denotes the Moore-Penrose pseudoinverse. 

\begin{quote} \em The key insight of our work is to propose an efficient Jacobian learning mechanism based on ideas borrowed from recent results in randomized numerical linear algebra. \end{quote}

Having established our update of the Jacobian estimate, we now need to use this to form an estimate of the gradient. Unfortunately, using $\mJ^{k+1}$ in place of $\Jac(x^k)$ in~\eqref{eq:ubif98gf8} leads to a biased gradient estimate (something we explore later in Section~\ref{sec:sketchproject}). To obtain an unbiased estimator of the gradient, we introduce a {\em stochastic relaxation parameter} $\theta_{\mS_k}$ and use
\begin{equation} \label{eq:g^k=intro}g^k \eqdef \frac{1-\theta_{\mS_k}}{n} \mJ^k \ones + \frac{\theta_{\mS_k}}{n} \mJ^{k+1} \ones = \frac{1}{n}\mJ^k \ones + \frac{1}{n}(\Jac(x^k) - \mJ^k) \theta_{\mS_k} \Proj_{\mS_k} \ones,\end{equation}
as an approximation of the gradient.
%
 Taking expectations in \eqref{eq:g^k=intro} over $\mS^k\sim \cD$ (for this we use the notation $\ED{\cdot} \equiv \EE{\mS_k\sim \cD}{\cdot}$), we get
\begin{equation}\label{eq:g9dg98ddkhjk}\ED{g^k} =  \frac{1}{n}\mJ^k \ones + \frac{1}{n}(\Jac(x^k) - \mJ^k) \ED{\theta_{\mS_k} \Proj_{\mS_k} \ones}.\end{equation}
Provided that \begin{equation}\label{eq:bias-corr-eq-intro}\ED{\theta_{\mS_k} \Proj_{\mS_k} \ones} = \ones,\end{equation} we have 
$\ED{g^k} \overset{\eqref{eq:g9dg98ddkhjk}}{=} \frac{1}{n}\Jac(x^k) \ones \overset{\eqref{eq:ubif98gf8}}{=} \nabla f(x^k)$, and hence, $g^k$ is a unbiased estimate of the gradient.
If \eqref{eq:bias-corr-eq-intro} holds, we say that $\theta_{\mS_k}$ is a  {\em bias-correcting random variable} and $\mS^k$ is an {\em unbiased sketch.}  Our new {\em JacSketch} method is method \eqref{eq:xupdate} with $g^k$ computed as via \eqref{eq:g^k=intro} and Jacobian estimate updated via \eqref{eq:jacobsolWintro}. This method is formalized in Section~\ref{sec:controlled} as Algorithm~\ref{alg:SketchJac}.

This strategy  indeed works, as we show in detail in this paper. Under appropriate conditions (on the stepsize $\alpha$, properties of $f$ and randomness behind the sketch matrices $\mS_k$ and so on), the variance of $g^k$ diminishes to zero (e.g., see Lemma~\ref{lem:gradient_bounddeltaXX}), which means that JacSketch is a variance-reduced method. We perform an analysis for smooth and strongly convex functions $f$, and obtain a linear convergence result (Theorem~\ref{theo:convgen}). We summarize our complexity results in detail in Section~\ref{sec:intro-summary-of-results}.

\subsection{SAGA as a special case of JacSketch} \label{sec:SAGA-intro}

Of particular importance in this paper are {\em minibatch sketches}, which are sketches of the form $\mS_k = \mI_{S_k}$, where $S_k$ is a random subset of $[n]$, and $\mI_{S_k}$ is a random column submatrix of the $n\times n$ identity matrix with columns indexed by $S_k$. For minibatch sketches, JacSketch corresponds to minibatch variants of SAGA. Indeed, in this case, and if $\mW=\diag(w_1,\dots,w_n)$, we have $\Proj_{\mS_k} \ones = e_{S_k}$, where
$e_{S} = \sum_{i\in S} e_i$ (see Lemma~\ref{prop:bus98g90s09}). Therefore, 
\begin{equation} \label{eq:g^k-SAGA} g^{k} = \frac{1}{n} \mJ^k \ones + \frac{\theta_{\mS_k}}{n} \sum_{i\in S_k} (\nabla f_i(x^k)- \mJ^k_{:i}) . \end{equation}
In view of \eqref{eq:jacobsolWintro}, and since $\Proj_{\mS_k} = \mI_{S_k} \mI_{S_k}^\top$ (see Lemma~\ref{prop:bus98g90s09}), the Jacobian estimate gets updated as follows
\begin{equation} \label{eq:SAGA-Jacobian_update} \mJ^{k+1}_{:i} = \begin{cases} \mJ^k_{:i} & \quad i\notin S_k,\\
 \nabla f_i(x^k)&  \quad i\in S_k. \end{cases}
\end{equation}

Standard uniform SAGA is obtained by setting $S_k = \{i\}$ with probability $1/n$ for each $i\in [n]$, and letting $\theta_{\mS_k} \equiv n$. SAGA with arbitrary probabilities is obtained by instead choosing  $S_k = \{i\}$ with probability $p_i>0$ for each $i\in [n]$, and letting $\theta_{\mS_k} \equiv \tfrac{1}{p_i}$. However, virtually all minibatching and importance sampling strategies can be treated as special cases of our general approach.

The theory we develop answers the open questions raised earlier. In particular, we answer the conjecture of Schmidt et al.\  \cite{Schmidtnonuni}  about the rate of SAGA with importance sampling in the affirmative. In particular, we establish the iteration complexity $(n+ \frac{4\bar{L}}{ \mu} ) \log \tfrac{1}{\epsilon}.$ This complexity is obtained for {\em different} importance sampling distributions than that currently proposed in the literature for SAGA.
In order to achieve this, we develop a new analysis technique which makes use of a {\em stochastic Lyapunov function} (see Section~\ref{sec:theopart}). That is, our Lyapunov function has a random element which is independent of the randomness inherited from the iterates of the method. This is unlike any other Lyapunov function used in the analysis of stochastic methods we are aware of.  Further,  we prove that SAGA converges with any initial matrix $\mJ^0$ in place of the matrix of gradients of functions $f_i$ at the starting point. We also show that our results give better rates for minibatch SAGA than are currently known, even for uniform minibatch strategies. We also allow for a family of completely new uniform minibatching strategies which were not considered  in connection with SAGA before, and consider also SAGA with importance sampling for minibatches\footnote{For some prior results on importance sampling for minibatches, in the context of QUARTZ, see \cite{Csiba_imp_minibatch}.} (based on a partition of $[n]$). 
  Lastly,  as a special case, our method recovers standard gradient descent, together with the sharp iteration complexity of  $\frac{4 L}{\mu}\log \tfrac{1}{\epsilon}$.


Our general approach also enables a novel {\em reduced memory} variant of SAGA as a special case. Let $\mS_k=e_{S_k}$, and choose $\mW = \mI.$ Since $\Proj_{\mS_k} \ones = e_{S_k}$, the formula for $g^k$ is the same as in the case of SAGA, and is given by~\eqref{eq:g^k-SAGA}. What is notably different about this sketch (compared to $\mI_{S_k}$) is that, since $\Proj_{\mI_{S_k}} = \frac{1}{|S_k|}e_{S_k} e_{S_k}^\top,$ the update of the Jacobian estimate~\eqref{eq:jacobsol} is given by
\[\mJ^{k+1} \overset{\eqref{eq:jacobsolWintro}}{=} \mJ^{k} -\frac{1}{|S_k|} \sum_{i \in S_k}\left(
\mJ^{k}_{:i} -\nabla f_i(x^k)\right) e_{S_k}^\top. \]

Thus, {\em the same update is applied to all the columns of $\mJ^k$ that belong to $S_k$.} Equivalently, this update can be written as
\begin{equation}\label{eq:jacreducedmem} \mJ^{k+1}_{:j} =
\begin{cases}
 \frac{1}{|S_k|} \sum_{i\in S_k} \nabla f_{i}(x^k) & \mbox{if } j \in S_k,\\
\mJ^{k}_{:j} &  \mbox{if } j \notin S_k.
\end{cases} \end{equation}

In particular, if $S_k$ only ever picks sets which correspond to a partition of $[n]$, and we initialize $\mJ^0$ so that all the columns belonging to the same partition are the same, then they will  be the same within in each partition for all $k$. In such a case, we do not need to maintain all the identical copies. Instead, we can update and use a condensed/compressed version  of the Jacobian, with one column per partition set only, to reduce the total memory usage. This method, with non-uniform probabilities, is analyzed in our framework in Section~\ref{subsec:calitercomspec}.

\subsection{Sketch and project}

In the case when $\Jac(x)$ is a constant matrix (i.e., does not depend on $x$), randomized iterative methods of the form \eqref{eq:sketch_n_project} for solving linear systems such as \eqref{eq:trivial_system} were recently proposed and analyzed by Gower and Richt\'{a}rik \cite{Gower2015}. For obvious reasons, an iteration of the form \eqref{eq:sketch_n_project} was therein called {\em sketch and project}. In the present context, they show that under weak assumptions on the distribution $\cD$ from which the matrices are sampled (in an i.i.d.\ fashion), the sketch and project method converges linearly to the unique solution of \eqref{eq:trivial_system}. For instance, if $\mS_k$ are unit coordinate vectors  in $\R^n$ chosen uniformly at random, then the theory in \cite{Gower2015, Gower2015c} suggests that the method converges in just $\cO(n \log \tfrac{1}{\epsilon})$ steps in expectation. This rate is to be expected as this choice of  sketching vectors enables us to learn an entire column of the true Jacobian in each iteration. Under this particular choice of the sketching matrix, the sketch and project method for solving  \eqref{eq:trivial_system} is closely related to the   randomized Kaczmarz method of Strohmer and Vershynin \cite{Strohmer2009}.

It has long been known, and was explored in detail by Needell, Srebro and Ward \cite{Needell2014}, that the randomized Kaczmarz method is a specific instantiation of SGD, applied to a suitable least-squares type function. In the context of sketch and project methods with arbitrary sketching matrices $\mS_k$, this was explored by Richt\'{a}rik and Tak\'{a}\v{c} \cite{ASDA}, who also demonstrated that the sketch and project method, and hence also our Jacobian learning iteration \eqref{eq:sketch_n_project}, can be interpreted 
as stochastic gradient descent applied to a suitable stochastic optimization problem.
Therefore, and quite surprisingly:

\begin{quote} \em In our Jacobian sketching framework, variance reduction is obtained by applying SGD to the problem of learning the Jacobian. So, our method uses SGD  in two different ways: as a method for performing the  step toward minimizing the loss (this is standard), and as a method for learning the Jacobian which is then used to lower the variance of the search direction (this is our new insight).
\end{quote}

As a follow up to \cite{Gower2015}, Gower and Richt\'{a}rik  further extended their analysis in \cite{Gower2015c} to arbitrary consistent linear systems (i.e., beyond systems with a single solution, such as \eqref{eq:trivial_system}). Therein they show that the sketch and project method converges linearly to the projection of the starting iterate onto the solution space of the system, and also uncover a dual interpretation of the method as stochastic dual subspace ascent. Related ideas were later used to design stochastic algorithms for inverting matrices \cite{Gower2016} and computing the pseudoinverse of a rectangular matrix~\cite{Gower2016end}. For a  compendium of some of the above papers on sketch and project, see also \cite{GowerThesis}.

An accelerated (in the sense of Nesterov) sketch and project method   was proposed and analyzed in \cite{ASDA}. However, the analysis was restricted to a  weak type of convergence. This was remedied  by Tu et al.\ \cite{TuVWGJR17} for positive definite systems and a special class of sketchings,  by Richt\'{a}rik and Tak\'{a}\v{c} \cite{ASDA2} for general linear systems and general sketchings, and further extended to Euclidean setting and applied to matrix inversion and quasi-Newton updates by Gower et al.\ \cite{ASMI}. A sketch and project method with the heavy ball momentum was studied in \cite{BasicMomentum}.


\subsection{Controlled stochastic reformulation}

Loosely motivated by \cite{ASDA}, we shall explore an alternative narrative to the sketch-and-project motivation described above. In particular,  the development of JacSketch can instead be motivated through the lens of {\em controlled stochastic reformulations} of \eqref{eq:prob}. 

Let us now very briefly outline the main idea. First, we will use the distribution $\cD$ from which the sketching matrices are drawn to define a stochastic optimization reformulation of  problem~\eqref{eq:prob}. That is, we write $f$ as an expectation over some carefully constructed functions $f_\mS(x)$ instead, where the expectation is taken over $\mS\sim \cD$. We then add a ``smart'' zero function, also of the form of an expectation of some functions over $\cD$, to this reformulation. However, this zero perturbation depends on $\mJ^k$. While this does not change the objective function, it affects the stochastic gradients in a positive way: it reduces their variance. We then apply an SGD step to this perturbed (or ``controlled'') reformulation, followed by an update of the Jacobian (through sketch and project). This is iterated until convergence, and results in JacSketch. This alternative narrative is provided in Section~\ref{sec:controlled}.


\subsection{Summary of complexity results}\label{sec:intro-summary-of-results}
  
   \begin{table}
 { \footnotesize
\begin{center}
\begin{tabular}{|c|c|c|c|c|}
\hline
ID & \bf Method & \begin{tabular}{c} \bf Sketch  $\mS\in \R^{n\times \tau}$ \\ \bf $\mW \succ 0$ \end{tabular} &   \bf Iteration complexity ($\times \log \tfrac{1}{\epsilon}$) & \bf Reference \\
 \hline
 \hline
1 & JacSketch & \begin{tabular}{c}any unbiased \\ any \end{tabular} & $ \max \left\{ \frac{4\cL_1}{\mu}, \, \frac{1}{\kappa} + \frac{4\rho \cL_2}{\kappa \mu n^2 }  \right\}$ & Thm~\ref{theo:convgen} \\ 
\hline
2& \begin{tabular}{c} JacSketch \\ 
(with any probabilities \\
for $\tau$--partition)
\end{tabular}& \begin{tabular}{c} $\mI_S$ \\ $\mI$ \end{tabular} & $ \max_{C \in \support(S)}  \left( \frac{1}{p_C}+ \frac{\tau}{n p_C }\frac{ 4 L_{C}}{\mu } \right) $ & Thm~\ref{theo:convpart} \\  
\hline
\hline
3 & Gradient descent & \begin{tabular}{c} $\mI $ \\ $\mI$\end{tabular} &  $\tfrac{4 L}{\mu} $ &  \begin{tabular}{c}Thm~\ref{theo:convgen} \\ \eqref{eq:bu98dg8sbh*YH} \end{tabular} \\
 \hline
4& Gradient descent & \begin{tabular}{c} $\mI $ \\ $\mI$ \end{tabular} &  $\frac{4L}{\mu}$ &  \begin{tabular}{c}Thm~\ref{theo:convpart} \\ \eqref{eq:ih80089h0fh89s8*^b} \end{tabular} \\
 \hline
 \hline 
 5&\begin{tabular}{c} SAGA \\ (with uniform sampling) \end{tabular} & \begin{tabular}{c}$\mI_S$  \\ $\mI$ \end{tabular} & $n+ \frac{4 L_{\max}}{\mu}$ & \begin{tabular}{c} Thm~\ref{theo:convgen}  \\ \eqref{eq:bu98dg8sbh*YH097809} \end{tabular}\\
 \hline
6&\begin{tabular}{c} SAGA \\ (with uniform sampling) \end{tabular} &  \begin{tabular}{c} $\mI_S$  \\ $\mI$ \end{tabular} & $n+ \frac{4 L_{\max}}{\mu}$ & \begin{tabular}{c} Thm~\ref{theo:convpart}  \\ \eqref{eq:ihb98sg9s08gb} \end{tabular}\\
 \hline
7& \begin{tabular}{c} SAGA \\ (with importance sampling) \end{tabular} & \begin{tabular}{c} $\mI_S$  \\  --- \end{tabular} & no improvement on uniform sampling & \begin{tabular}{c} Thm~\ref{theo:convgen} \\  \end{tabular}\\
\hline
8&\begin{tabular}{c}  SAGA \\  (with importance sampling) \end{tabular} & \begin{tabular}{c} $\mI_S$  \\ $\mI$ \end{tabular} & {$n+ \frac{4 \bar{L}}{\mu}$} & \begin{tabular}{c} Thm~\ref{theo:convpart} \\ \eqref{eq:itercomplexopt} \end{tabular}\\
\hline
\hline
9 & \begin{tabular}{c}  Minibatch SAGA \\  ($\tau$--uniform sampling)  \end{tabular}  &  \begin{tabular}{c}   $\mI_S$  \\ $\diag(w_i)$ \end{tabular} &  $ \max \left\{\frac{4 \LGconst }{\mu}, \,
\frac{n}{\tau} + \frac{4 \rho}{\mu n}\max_{i} \left( \frac{L_i}{w_i} \right)     \right\} $ & \begin{tabular}{c} Thm~\ref{theo:convgen} \\ \eqref{eq:itercompluni} \end{tabular} \\ 
\hline
10& \begin{tabular}{c}   Minibatch SAGA \\  ($\tau$--nice sampling)  \end{tabular}  &  \begin{tabular}{c}  $\mI_S$  \\ $\mI$ \end{tabular} & {$\max \left\{ \frac{4 \LGconst}{\mu}, \,
 \frac{n}{\tau} + \frac{n-\tau}{(n-1)\tau} \frac{4 L_{\max}}{\mu}      \right\} $ } & \begin{tabular}{c} Thm~\ref{theo:convgen} \\ \eqref{eq:no8sh09hs9h0(((} \end{tabular} \\
 \hline
11& \begin{tabular}{c}   Minibatch SAGA \\   ($\tau$--nice sampling)  \end{tabular}  &   \begin{tabular}{c} $\mI_S$ \\ $ \diag(L_i)$ \end{tabular} & {$\max \left\{ \frac{4 \LGconst}{\mu}, \, \frac{n}{\tau} + 
 \frac{n-\tau}{n \tau} \frac{4 (\bar{L}+L_{\max})}{\mu}      \right\}$} & \begin{tabular}{c} Thm~\ref{theo:convgen} \\ \eqref{eq:nh09s8h09sh9JJ} \end{tabular}\\
 \hline 
12 & \begin{tabular}{c}  Minibatch SAGA \\  ($\tau$--partition sampling)  \end{tabular}  &  \begin{tabular}{c}  $\mI_S$ \\ $\mI$ \end{tabular} & $ \frac{n}{\tau} + \frac{4 L_{\max}}{\mu}  $ & \begin{tabular}{c} Thm~\ref{theo:convgen}\\ \eqref{eq:nbo9s80s9h} \end{tabular}\\
 \hline
13 & \begin{tabular}{c}  Minibatch SAGA \\  ($\tau$--partition sampling)  \end{tabular}  &  \begin{tabular}{c}  $\mI_S$ \\ $ \diag(L_i)$ \end{tabular} & $
 \frac{n}{\tau} + \frac{4 \max_{C\in \support(S)} \frac{1}{ \tau}\sum_{i\in C} L_i}{\mu}       $ & \begin{tabular}{c} Thm~\ref{theo:convgen} \\ \eqref{eq:bs98g9ofgebd} \end{tabular}\\
  \hline
14 & \begin{tabular}{c} Minibatch SAGA \\ (importance $\tau$--partition \\  sampling) 
\end{tabular} &  \begin{tabular}{c}  $\mI_S$ \\ $\mI$ \end{tabular} &{ $\frac{n}{\tau}+ \frac{4 \frac{1}{|\support(S)|}\sum_{C\in \support(S)} L_C}{\mu}$} & \begin{tabular}{c} Thm~\ref{theo:convpart} \\ \eqref{eq:itercomplexoptmini} \end{tabular}\\
 \hline 
\end{tabular}

\end{center}
}
\caption{Special cases of our JacSketch method, and the  associated iteration complexity. All methods converge linearly. In the iteration complexity column we list the number of iterations sufficient to obtain an $\epsilon$ accurate solution, ignoring a  $\log \tfrac{1}{\epsilon}$ factor.}
\label{tbl:complexity_summary}
\end{table}  

All  convergence results obtained in this paper are summarized in Table~\ref{tbl:complexity_summary}. 

Our convergence results depend on several constants which we will now briefly introduce. The precise definitions can be found in the main text. For $\emptyset \neq C\subseteq [n]=\{1,2,\dots,n\}$, define $f_C(x) \eqdef \frac{1}{|C|}\sum_{i\in C} f_i(x)$. We assume $f_C$ is $L_C$--smooth\footnote{A formal definition can be found in Assumption~\ref{ass:L_C-smoothness}.}. We let $L_i=L_{\{i\}}$,   $L = L_{[n]}$, $L_{\max} = \max_i L_i$ and $\bar{L}=\tfrac{1}{n}\sum_i L_i$. Note that $L_i \leq L_{\max}$, $\bar{L} \leq L_{\max} \leq n \bar{L}$, $L_C \leq \tfrac{1}{|C|}\sum_{i\in C} L_i$ and $L \leq \bar{L}$. For a sampling\footnote{In this paper, a sampling is a random set-valued mapping with the sets being subsets of $[n]$.} $S\subseteq [n]$, we let $\support(S) = \{C \subseteq [n] \;:\; \Prb{S = C} >0\}$. That is, the support of a sampling is the number of sets which are selected by this sampling with positive probability.  Finally, $\LGconst = \max_i \tfrac{1}{c_1}\sum_{C \in \support(S), i \in C} L_C$, where $c_1$ is the cardinality of the set $\{ C \;:\; C \in \support(S), i \in C\}$ (which is assumed to be the same for all $i$). So, $\LGconst$ is the maximum over $i$ of averages of values $L_C$ for those sets  $C$ which are picked by $S$ with positive probability and which contain $i$. Clearly, $\LGconst \leq L_{\max}$ (see Theorem~\ref{lem:two_ineqXXX}).

\paragraph{General theorem.} Theorem~\ref{theo:convgen} is our most general result, allowing for any(unbiased) sketch $\mS$ (see  \eqref{eq:bias-corr-eq-intro}), and any weight matrix $\mW\succ 0$. The resulting iteration complexity given by this theorem is
\[  \max \left\{ \frac{4\cL_1}{\mu}, \, \frac{1}{\kappa} + \frac{4\rho \cL_2}{\kappa \mu n^2 }  \right\} \times  \log \left(\frac{1}{\epsilon}\right) , \]
and is also presented in the first row of Table~\ref{tbl:complexity_summary}. This result depends on two {\em expected smoothness} constants $\cL_1$ (measuring the expected smoothness of the stochastic gradient of our stochastic reformulation; see Assumption~\ref{ass:ES1}) and $\cL_2$ (measuring the expected smoothness of the Jacobian; see Assumption~\ref{ass:ES2}). The complexity also depends on the {\em stochastic condition number} $\kappa$ (see \eqref{eq:kappa}) and the {\em sketch residual} $\rho$ (see \eqref{eq:rhointro} and \eqref{eq:itercomplexgen}). We devote considerable effort to give simple formulas for these constants under some specialized settings (for special combinations of sketches $\mS$ and weight matrices $\mW$). In fact, the entire Section~\ref{sec:minibatch_sketches} is devoted to this. In particular, all rows of Table~\ref{tbl:complexity_summary} where the last column mentions Theorem~\ref{theo:convgen} arise as special cases of the general iteration complexity in the first row.

\begin{itemize}
\item {\bf Gradient descent.} As a starting point, in row 3 we highlight that one can recover gradient descent as a special case of JacSketch with the choice $\mS = \mI$ (with probability 1) and $\mW=\mI$. We get the rate $\tfrac{4L}{\mu} \log \tfrac{1}{\epsilon}$, which is tight. 

\item {\bf SAGA with uniform sampling.} Let us now focus on a slightly more interesting special case: row 5. We see that SAGA with uniform probabilities appears as a special case, and enjoys the rate $(n+ \tfrac{4L_{\max}}{\mu}) \log \tfrac{1}{\epsilon}$, recovering an existing result. 

\item {\bf SAGA with importance sampling.} Unfortunately, the generality of Theorem~\ref{theo:convgen} comes at a cost: we are not able to obtain an importance sampling version of SAGA as a special case which would have a better iteration complexity than uniform SAGA. This will be remedied by our second complexity theorem, which we shall discuss later below. 

\item {\bf Minibatch SAGA. } Rows 9-13 correspond to minibatch versions of SAGA. In particular, row 9 contains a general statement (albeit still a special case of the statement in row 1), covering virtually all minibatch strategies. Rows 10-13 specialize this result to two particular minibatch sketches (i.e., $\mS=\mI_S$), each with two choices of $\mW$. The first sketch corresponds to samplings $S$ which choose from among all subsets of $[n]$ uniformly at random. This sampling is known in the literature as $\tau$-nice sampling \cite{PCDM, ESO}. The second sketch corresponds to $S$ being a $\tau$--partition sampling. This sampling picks uniformly at random subsets of $[n]$ which form a partition of $[n]$, and are all of cardinality $\tau$. Notice that the complexities in rows 10 and 11 are comparable (each can be slightly better than the other, depending on the values of the smoothness constants $\{L_i\}$). On the other hand, in the case of $\tau$--partition, the choice $\mW=\diag(L_i)$ is better than $\mW = \mI$: the complexity in row 13 is better than that in row 12. This is because
$\max_{C\in \support(S)} \frac{1}{\tau} \sum_{i\in C} L_i \leq L_{\max}.$ 
  
\item {\bf Optimal minibatch size for SAGA. } Our analysis for mini-batch SAGA also gives  the first iteration complexities that interpolate between the $(n+\frac{4L_{\max}}{\mu})\log \tfrac{1}{\epsilon}$  complexity of SAGA and the $\frac{4L}{\mu}\log \tfrac{1}{\epsilon}$  complexity  of gradient descent, as $\tau$ increases from $1$ to $n$. Indeed, consider the complexity in rows~10, 11 and 13 for $\tau =1$ and $\tau =n.$  Our iteration complexity of mini-batch SAGA is the first result that is precise enough to inform an optimal mini-batch size (see Section~\ref{subsec:optimalminibatch}).  In contrast, the previous best complexity result for mini-batch SAGA~\cite{HofmannLLM152015} interpolates between $(n+\frac{4L_{\max}}{\mu}) \log \tfrac{1}{\epsilon}$ and $\frac{4L_{\max}}{\mu} \log \tfrac{1}{\epsilon}$ as $\tau$ increases from $1$ to $n$, and thus is not precise enough as to inform the best minibatch size. We make a more detailed comparison between our results and~\cite{HofmannLLM152015} in Section~\ref{sec:hofmann}. 
\end{itemize}

\paragraph{Specialized theorem.} We now move to the second main complexity result of our paper: Theorem~\ref{theo:convpart}. The general complexity statement is listed in row 2 of Table~\ref{tbl:complexity_summary}: \begin{equation} \label{eq:nbo089hf09} \max_{C\in \support(S)} \left( \frac{1}{p_C} + \frac{\tau}{np_C} \frac{4 L_C}{\mu}\right) \times \log \left(\frac{1}{\epsilon}\right),\end{equation} where $p_C = \Prb{S=C}$. This theorem is a refined result specialized to minibatch sketches ($\mS=\mI_S$) with $\tau$--partition samplings $S$. This is a sampling which picks subsets of $[n]$ of size $\tau$  forming a partition of $[n]$, uniformly at random. Our focus on $\tau$--partition samplings  enables us to provide stronger iteration complexity guarantees for non-uniform probabilities.  

\begin{itemize}
\item {\bf Gradient descent.} As a starting point, we point out that just like Theorem~\ref{theo:convgen},  Theorem~\ref{theo:convpart} also recovers the correct complexity of gradient descent as a special case (this is when $S=[n]$ with probability 1); this can be seen in row 4.  Indeed, in this case we have $S=[n]$ with probability 1 (hence, $p_{[n]}=1$), $\support(S) = \{[n]\}$, $\tau=n$ and $L_{[n]} = L$. Hence, \eqref{eq:nbo089hf09} specializes to $\frac{4 L}{\mu} \log \frac{1}{\epsilon}$.

\item {\bf SAGA with importance sampling.} The first remarkable special case of \eqref{eq:nbo089hf09}  is summarized in row~8, and corresponds to SAGA with importance sampling. The complexity obtained, $(n+ \tfrac{4\bar{L}}{\mu}) \log \tfrac{1}{\epsilon}$, answers a conjecture of Schmidt et al.\  \cite{Schmidtnonuni} in the affirmative. In this case, the support of $S$ are the singletons $\{1\}$, $\{2\}, \dots, \{n\}$, $p_{\{i\}} =  p_i$ for all $i$, $\tau=1$ and $L_{\{i\}} = L_i$. Optimizing the complexity bound over the probabilities $p_1,\dots,p_n$, we obtain the importance sampling
$p_i = \frac{\mu n + 4\tau L_i}{\sum_j \mu n + 4\tau L_j}.$

\item {\bf Minibatch SAGA with importance sampling.} In row 14 we state the complexity for a minibatch SAGA method with importance sampling. This is the first result for this method in the literature. Note that by comparing rows 13 and 14, we can conclude that the complexity of minibatch SAGA with importance sampling is better than for minibatch SAGA with uniform probabilities. Indeed, this is because\footnote{We prove inequality \eqref{eq:inew_intro} in the appendix; see Lemma~\ref{lem:two_ineq}.}
\begin{equation} \label{eq:inew_intro}\frac{1}{|\support(S)|}\sum_{C\in \support(S)} L_C \leq \bar{L} \leq \max_{C\in \support(S)} \frac{1}{\tau} \sum_{i\in C} L_i.\end{equation}

\end{itemize}

\subsection{Outline of the paper}

We present an alternative narrative  motivating the development of JacSketch in Section~\ref{sec:controlled}. This narrative is based on a novel technical tool which we call {\em controlled stochastic optimization reformulations} of problem~\eqref{eq:prob}.  We then develop a general convergence theory of JacSketch in Section~\ref{sec:theogen}. This theory admits practically any sketches $\mS$ (including minibatch sketches mentioned in the introduction) and weight matrices $\mW$. The main result in this section is Theorem~\ref{theo:convgen}.  In Section~\ref{sec:minibatch_sketches} we specialize the general results to minibatch sketches. Here we also compute the various constants appearing in the general complexity result for JacSketch for specific classes of minibatch samplings. In Section~\ref{sec:theopart} we develop an alternative theory for JacSketch, one based on a novel {\em stochastic Lyapunov function}. The main result in this section is Theorem~\ref{theo:convpart}. Computational experiments are included in Section~\ref{sec:experiments}.  

 \subsection{Notation}

We will introduce notation when and as needed. If the reader would like to recall any notation, for ease of reference we have a notation glossary in Section~\ref{sec:notation_glossary}. As a general rule, all matrices are written in upper-case bold letters. By $\log t$ we refer to the natural logarithm of $t$.
\section{Controlled Stochastic Reformulations} \label{sec:controlled}

In this section we provide an alternative narrative behind the development of JacSketch; one through the lens of what we call {\em controlled stochastic reformulations}. These reformulations are a novel technical tool enabling us to view JacSketch from a novel perspective.

We design our family of methods so that two keys properties are satisfied, namely {\em unbiasedness},
$
\E{g^k}  =  \nabla f(x^k),
$
 and {\em diminishing variance:}
$
\E{\norm{g^k - \nabla f(x^k) }_2^2} \longrightarrow 0
$ as $x^k \rightarrow x^*$. These are  both favoured statistical properties. Moreover, currently only methods that have diminishing variance exhibt fast linear convergence (exponential decay of the error) on strongly convex problems. On the other hand, unbiasedness is not necessary for a fast method in practice since several biased stochastic gradient methods such as SAG~\cite{SAG} perform well in practice. Still, the absence of bias greatly facilitates the analysis of JacSketch.

\subsection{Stochastic reformulation using sketching} \label{sec:sgdreform}

It will be useful to formalize the condition mentioned in Section~\ref{sec:intro_jacsketch} which leads to  $g^k$ being an unbiased estimator of the gradient. 

\begin{assumption}[Unbiased sketch]\label{def:sketch}
Let $\mW\succ 0$ be a weighting matrix and let $\mD$ be the distribution from which the sketch matrices $\mS$ are drawn.
	 There exists a random variable $\theta_{\mS}$ such that
\begin{equation}\label{eq:unbiased}
\ED{\theta_{\mS} \Proj_\mS} \ones  =  \ones.
\end{equation}

When this assumption is satisfied, we say that $(\mS, \theta_\mS, \mW)$ constitutes an ``unbiased sketch'',  and we call $\theta_{\mS}$ the  bias-correcting random variable. When the triple is obvious from the context, sometimes we shall simply say that $\mS$ is an unbiased sketch.
\end{assumption}

The first key insight of this section is that  besides producing unbiased estimators of the gradient, unbiased sketches produce {\em unbiased estimators of the loss function} as well.   Indeed, by simply observing that $f(x) = \frac{1}{n}\dotprod{F(x),\ones}$, we get 
\[f(x) \overset{\eqref{eq:prob}}{=}   \frac{1}{n}\sum_{i=1}^n f_i(x)    = \frac{1}{n}\dotprod{F(x),\ones}  
\overset{\eqref{eq:unbiased}}{=}   \frac{1}{n}\dotprod{F(x),\ED{\theta_{\mS} \Proj_\mS \ones}} = \ED{\frac{1}{n}\dotprod{F(x),\theta_{\mS} \Proj_\mS \ones}}.
\]
 In other words, we can rewrite the finite-sum optimization problem \eqref{eq:prob} as an equivalent stochastic optimization problem where the randomness comes from $\cD$ rather than from the representation-specific uniform distribution over the $n$ loss functions:
 \begin{equation}\label{eq:probstochvarh}
 \min_{x \in \R^d} f(x) = \ED{f_{\mS}(x)}, \qquad \text{where} \qquad f_{\mS}(x) \eqdef \frac{\theta_{\mS}}{n}\dotprod{F(x), \Proj_\mS \ones}.
 \end{equation}
The stochastic optimization problem \eqref{eq:probstochvarh} is  a 
 {\em stochastic reformulation} of the original problem \eqref{eq:prob}.
 Further, the stochastic gradient of this reformulation is given by
\begin{equation}
\label{eq:stochgradplain}
\nabla f_{\mS}(x)   = \frac{\theta_{\mS}}{n}\Jac(x) \Proj_\mS \ones.
\end{equation}
With these simple observations, our options  at designing stochastic gradient-type algorithms for  \eqref{eq:prob} have suddenly broadened dramatically. Indeed, we can now solve the problem, at least in principle, by applying SGD to any stochastic reformulation:
\begin{equation}\label{eq:SGD_stoch_reform}x^{k+1} = x^k - \alpha \nabla f_{\mS_k} (x^k).  \end{equation}
 But now we have a parameter to play with, namely, the distribution of $\mS$. The choice of this parameter will influence both the iteration complexity of the resulting method as well as the cost of each iteration. We now give a few examples of possible choices of $\cD$ to illustrate this.
 
\begin{example}[gradient descent] \label{ex:1} Let $\mS$ be equal to $\mI$  (or any other $n\times n$ invertible matrix) with probability 1 and let $\mW \succ 0$ be chosen arbitrarily.  Then  $\theta_{\mS} \equiv 1$ is bias-correcting since
\[\ED{ \theta_\mS \Proj_\mS \ones } =\Proj_{\mS} \ones \overset{\eqref{eq:PSdef}}{=}  \mS(\mS^\top \mW \mS)^{\dagger} \mS^\top \mW \ones = \mS \mS^{-1}\mW^{-1} (\mS^{\top})^{-1} \mS^\top \mW \ones = \mI \ones = \ones.\] 
With this setup, the SGD method \eqref{eq:SGD_stoch_reform} becomes {\em gradient descent:}
\begin{equation} \label{eq:GD098098} x^{k+1} = x^k -  \alpha \nabla f_{\mS_k} (x^k)  \overset{\eqref{eq:ubif98gf8}+\eqref{eq:stochgradplain}}{=} x^k -  \alpha \nabla f (x^k).  \end{equation}
\end{example}

\begin{example}[SGD with non-uniform sampling] \label{ex:2}
Let $\mS=e_i$ (unit basis vector in $\R^n$) with probability  $p_i>0$   and let $\mW = \mI$.  Then  $\theta_{e_i} = 1/p_i$   is bias-correcting since
\[\ED{ \theta_\mS \Proj_\mS \ones } \overset{\eqref{eq:PSdef}}{=}  \sum_{i=1}^n p_i \frac{1}{p_i} e_i(e_i^\top e_i)^{-1} e_i^\top \ones   =\sum_{i=1}^n  e_i  e_i^\top \ones = \mI \ones = \ones.\] 

Let  $S_k= \{i_k\}$ be picked at iteration $k$. Then the SGD method \eqref{eq:SGD_stoch_reform} becomes {\em SGD with non-uniform sampling:}
\begin{equation} \label{eq:GD0jbs99}  x^{k+1} = x^k -  \alpha \nabla f_{\mS_k} (x^k)  \overset{\eqref{eq:stochgradplain}}{=} x^k -  \frac{\alpha}{n p_{i_k}} \nabla f_{i_k} (x^k).  \end{equation}
Note that with this setup, and when $p_i=1/n$ for all $i$, the stochastic reformulation is identical to the original finite-sum problem. This is the case because $f_{e_i}(x) = f_i(x)$.
\end{example}

\begin{example}[minibatch SGD]
Let $\mS=e_{S} = \sum_{i\in S}e_i$, where $S=C\subseteq [n]$ with probability $p_C$. Let $\mW = \mI$.  Assume that the cardinality of the set $\{C \subseteq [n]\;:\;C\in \support(S), \; i\in C  \}$ does not depend on $i$  (and is equal to $c_1>0$). Then  $\theta_{e_S} = 1/(c_1 p_S)$  is bias-correcting since
\[\ED{ \theta_\mS \Proj_\mS \ones } \overset{\eqref{eq:PSdef}}{=}  \sum_{C\in \support(S)} p_C \frac{1}{c_1 p_C} e_C(\underbrace{e_C^\top e_C}_{|C|})^{-1} \underbrace{e_C^\top \ones}_{|C|}   = \sum_{C\in \support(S)}  \frac{1}{c_1} e_C  = \ones.\] 
Note that $\Proj_{e_S}\ones = e_S$. Assume that set $S_k$ is picked in iteration $k$.Then the SGD method \eqref{eq:SGD_stoch_reform} becomes {\em minibatch SGD with non-uniform sampling:}
\begin{equation} \label{eq:ug9d082j0nd} x^{k+1} = x^k -  \alpha \nabla f_{\mS_k} (x^k)  \overset{\eqref{eq:stochgradplain}}{=} x^k -  \frac{\alpha}{n c_1}\sum_{i\in S_k}\frac{1}{p_{S_k}} \nabla f_i (x^k).  \end{equation}
Finally, note that gradient descent \eqref{eq:GD098098} is a special case of \eqref{eq:ug9d082j0nd} if we set $p_{[n]} = 1$ and $p_{C}=0$ for all other subsets $C$ of $[n]$. Likewise, SGD with non-uniform probabilities \eqref{eq:GD0jbs99} is a  special case of \eqref{eq:ug9d082j0nd} if we set $p_{ \{i\} } = p_i>0$ for all $i$ and $p_{C}=0$ for all other subsets $C$ of $[n]$.
\end{example}

\subsection{The controlled stochastic reformulation}
\label{sec:sgdcontrolreform}

Though SGD applied to the stochastic reformulation can generate several known algorithms in special cases,  there is no reason to believe that the gradient estimates $g^k$ will have diminishing variance (excluding the extreme case such as gradient descent). Here we handle this issue using {\em control variates}, a commonly used tool to reduce variance in Monte Carlo methods~\cite{hickernell2005}.

Given a random function $z_{\mS}(x)$, we introduce the \emph{controlled stochastic reformulation}:
  \begin{equation}\label{eq:probstochcon}
  \min_{x \in \R^d} f(x) = \ED{f_{\mS,z}(x)}, \qquad \text{where} \qquad f_{\mS,z}(x) \eqdef f_{\mS}(x) -z_{\mS}(x) + \ED{z_{\mS}(x)}.
  \end{equation}

Since
\begin{equation}\label{eq:controlgrad} \nabla f_{\mS,z}(x) \eqdef  \nabla f_{\mS}(x) - \nabla z_{\mS}(x) + \ED{\nabla z_{\mS}(x)}\end{equation}
is an unbiased estimator of the gradient $\nabla f(x)$, we can  apply SGD to the controlled stochastic reformulation instead, which leads to the method
\[x^{k+1} = x^k -\alpha ( \nabla f_{\mS_k}(x) - \nabla z_{\mS_k}(x) + \ED{\nabla z_{\mS}(x)}).\]
Reformulation  \eqref{eq:probstochvarh} and method \eqref{eq:SGD_stoch_reform} is recovered as a special case with the choice $z_\mS(x) \equiv 0$. However, we now have the extra freedom to choose $z_{\mS}(x)$ so as to control the variance of this stochastic gradient. In particular, if $\nabla z_{\mS}(x)$ and $ \nabla f_{\mS}(x)$ are sufficiently correlated, then~\eqref{eq:controlgrad} will have a smaller variance than $  \nabla f_{\mS}(x).$ For this reason, we choose a linear model for $z_{\mS}(x)$ that mimicks the stochastic function $f_{\mS}(x).$ 

Let $\mJ \in \R^{d \times n}$ be a matrix of parameters of the following linear model
\begin{equation}\label{eq:covariate}
z_{\mS}(x) \eqdef \frac{\theta_{\mS}}{n} \dotprod{\mJ^\top x,   \Proj_\mS\ones}, \qquad \nabla z_{\mS}(x) =  \frac{\theta_{\mS}}{n} \mJ\, \Proj_\mS\ones. \end{equation}
 Note that this linear model has the same structure as $f_{\mS}(x)$ in~\eqref{eq:probstochvarh} except that $F(x)$ has been replaced by the linear function $\mJ^\top x.$ 
If  $\mS$ is an unbiased sketch (see \eqref{eq:unbiased}), we get  $\ED{\nabla z_{\mS}(x)} = \frac{1}{n} \mJ \ones $, which plugged into~\eqref{eq:probstochcon} and~\eqref{eq:controlgrad} together with the definition \eqref{eq:probstochvarh} of $f_\mS$  gives the following unbiased estimate of $f(x)$ and $\nabla f(x)$:
  \begin{equation}\label{eq:controlfuncJ} 
  f_{\mS,\mJ}(x) \eqdef f_{\mS,z}(x) = \frac{\theta_{\mS}}{n} \dotprod{F(x)- \mJ^\top x,  \Proj_\mS\ones}  + \frac{1}{n}\dotprod{\mJ^\top x, \ones},\end{equation}
 and
 \begin{equation}\label{eq:controlgradJ} 
 \nabla f_{\mS,\mJ}(x) \eqdef \nabla f_{\mS,z}(x) = \frac{\theta_{\mS}}{n} (\Jac(x)-\mJ)   \Proj_\mS\ones  + \frac{1}{n} \mJ \ones.\end{equation}

We collect this observation that~\eqref{eq:controlgradJ} is unbiased in the following lemma for future reference.
\begin{lemma}
	If $\mS$ is an unbiased sketch (see Definition~\ref{def:sketch}), then 
	\begin{equation}\label{eq:unbiasedgrad}
	\ED{ \nabla f_{\mS,\mJ}(x)} = \nabla f(x), \end{equation}
	for every $\mJ \in \R^{d \times n}$ and $x\in \R^d$.
	That is,  \eqref{eq:controlgradJ} is an unbiased estimate of the gradient~\eqref{eq:prob}. 
\end{lemma}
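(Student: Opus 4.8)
The plan is to verify the identity by direct computation, taking the expectation $\ED{\cdot}$ of the expression in \eqref{eq:controlgradJ} and using the unbiasedness property \eqref{eq:unbiased} of the sketch. First I would write out
\[
\ED{\nabla f_{\mS,\mJ}(x)} = \ED{\frac{\theta_{\mS}}{n}(\Jac(x)-\mJ)\Proj_\mS \ones + \frac{1}{n}\mJ\ones}.
\]
The second term $\frac{1}{n}\mJ\ones$ is deterministic (does not depend on $\mS$), so it pulls out of the expectation unchanged. For the first term, since $\Jac(x)$ and $\mJ$ are fixed matrices independent of $\mS$, linearity of expectation lets me pull the constant matrix $(\Jac(x)-\mJ)$ out, leaving $(\Jac(x)-\mJ)\cdot \frac{1}{n}\ED{\theta_{\mS}\Proj_\mS \ones}$.

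The key step is then to invoke Assumption~\ref{def:sketch} (the unbiased sketch hypothesis), which gives exactly $\ED{\theta_{\mS}\Proj_\mS}\ones = \ones$. Substituting this in, the first term becomes $\frac{1}{n}(\Jac(x)-\mJ)\ones = \frac{1}{n}\Jac(x)\ones - \frac{1}{n}\mJ\ones$. Adding back the second term $\frac{1}{n}\mJ\ones$, the $\mJ$ contributions cancel, leaving $\frac{1}{n}\Jac(x)\ones$. Finally, applying \eqref{eq:ubif98gf8}, namely $\frac{1}{n}\Jac(x)\ones = \nabla f(x)$, yields the claim.

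There is really no substantive obstacle here — the lemma is a bookkeeping consequence of how $f_{\mS,\mJ}$ was constructed in \eqref{eq:controlfuncJ}–\eqref{eq:controlgradJ} (indeed the preceding text already derived \eqref{eq:controlgradJ} precisely so that this would hold), so the only thing to be careful about is making explicit that $\Jac(x)$, $\mJ$, and hence also $\frac{1}{n}\mJ\ones$, are non-random with respect to the distribution $\cD$ over $\mS$, which is what legitimizes pulling them through $\ED{\cdot}$. One could also present this even more compactly by starting from the first equality in \eqref{eq:controlgradJ} (the defining relation $\nabla f_{\mS,\mJ} = \nabla f_{\mS,z}$ with the linear control variate \eqref{eq:covariate}) and citing the general unbiasedness of control variates: $\ED{\nabla f_{\mS,z}(x)} = \ED{\nabla f_{\mS}(x)} - \ED{\nabla z_{\mS}(x)} + \ED{\nabla z_{\mS}(x)} = \ED{\nabla f_{\mS}(x)}$, and then noting $\ED{\nabla f_{\mS}(x)} = \frac{1}{n}\Jac(x)\ED{\theta_{\mS}\Proj_\mS \ones} = \frac{1}{n}\Jac(x)\ones = \nabla f(x)$ by \eqref{eq:stochgradplain}, \eqref{eq:unbiased}, and \eqref{eq:ubif98gf8}. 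Either route is a few lines; I would favor the second since it highlights that the result is just the general control-variate identity specialized to this setting.
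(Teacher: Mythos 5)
Your proposal is correct and matches the paper's reasoning: the paper establishes this lemma via the derivation immediately preceding it, namely that the control-variate construction $\nabla f_{\mS,z} = \nabla f_{\mS} - \nabla z_{\mS} + \ED{\nabla z_{\mS}}$ is automatically unbiased and $\ED{\nabla f_{\mS}(x)} = \tfrac{1}{n}\Jac(x)\ED{\theta_{\mS}\Proj_{\mS}\ones} = \nabla f(x)$ by the unbiased-sketch condition, which is exactly your second route (and your direct cancellation computation is the same calculation the paper carries out in Section~\ref{sec:intro_jacsketch}). No gaps; both of your routes are just bookkeeping with linearity of expectation and \eqref{eq:unbiased}, as you note.
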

Now it remains to choose the matrix $\mJ$, which  we do by minimizing the variance of our gradient estimate.

\subsection{The Jacobian estimate, variance reduction and the sketch residual}
\label{sec:jacupdate}

Since~\eqref{eq:controlgradJ} gives an unbiased estimator of $\nabla f(x)$  for all $\mJ \in \R^{d \times n}$, we can attempt to choose $\mJ$ that minimizes its variance. Minimizing the variance of~\eqref{eq:controlgradJ} in terms of $\mJ$ will, for all sketching matrices of interest, lead to $\mJ = \Jac(x).$ This follows because
 \begin{eqnarray}
 \ED{\norm{\nabla f_{\mS,\mJ}(x) - \nabla f(x)}_2^2} &\overset{\eqref{eq:controlgradJ}}{=} & 
 \ED{\norm{\frac{1}{n} \mJ( \mI-\theta_{\mS} \Proj_\mS)\ones -\frac{1}{n}\Jac(x)( \mI -\theta_{\mS} \Proj_\mS)\ones  }_2^2} \notag\\
 &= &  \frac{1}{n^2}\ED{\norm{(\mJ-\Jac(x))(\mI -\theta_{\mS} \Proj_\mS)\ones  }_2^2} \nonumber\\
 & =& \frac{1}{n^2}\Tr{(\mJ-\Jac(x))^\top (\mJ-\Jac(x)) \mB},\nonumber\\
 &= & \frac{1}{n^2} \|\mJ-\Jac(x)\|_{\mB}^2,\label{eq:minofvariance}
 \end{eqnarray} 
 where
 \begin{eqnarray} \mB &\eqdef & \ED{( \mI -\theta_{\mS} \Proj_\mS)\ones \ones^\top ( \mI-\theta_{\mS} \Proj_\mS^\top)} \overset{\eqref{eq:unbiased}}{=} \ED{\theta_{\mS}^2 \Proj_{\mS} \ones \ones^\top \Proj_{\mS}^\top} - \ones \ones^\top \succeq 0,\label{eq:Bmat}\end{eqnarray}
 and we have used the weighted Frobenius norm with weight matrix $\mB$ (see \eqref{eq:fro_norm}).  
 
 For most distributions $\cD$ of interest, the matrix $\mB$ is positive definite\footnote{Excluding such trivial cases as when $\mS$ is an invertible matrix and $\theta_{\mS} =1$ with probability one, in which case $\mB =0$}.  Letting $v_\mS \eqdef ( \mI -\theta_{\mS} \Proj_\mS)\ones$, we can bound the largest eigenvalue of matrix $\mB$ via Jensen's inequality as follows: \[\lambda_{\max}(\mB) \overset{\eqref{eq:Bmat}}{=} \lambda_{\max}(\ED{v_\mS v_\mS^\top}) \leq  \ED{\lambda_{\max}(v_\mS v_\mS^\top)} =  \ED{\|v_\mS\|_2^2}.  \]
 Combined with \eqref{eq:minofvariance}, we get the the following bound on the variance of $\nabla f_{\mS,\mJ}$:
 \[\ED{\norm{\nabla f_{\mS,\mJ}(x) - \nabla f(x)}_2^2} \leq \frac{\ED{\|v_\mS\|_2^2}}{n^2} \|\mJ - \Jac(x)\|_\mI^2.\]
This suggests that the variance is low when $\mJ$ is close to the true Jacobian $\Jac(x)$, and when the second moment of $v_\mS$ is small. If $\mS$ is an unbiased sketch, then $\ED{v_\mS}=0$, and hence $\ED{\|v_\mS\|_2^2}$ is the variance of $v_\mS$. So, the lower the variance of $\tfrac{1}{n}\theta_\mS \Proj_\mS e$ as an estimator of  $\tfrac{1}{n}e$, the lower the variance of $\nabla f_{\mS,\mJ}(x)$ as an estimator of $\nabla f(x)$.

Let us now return to the identity  \eqref{eq:minofvariance} and its role in choosing $\mJ$. Minimizing the variance in a single step is overly ambitious, since it requires setting $\mJ = \Jac(x)$, which is costly. So instead, we propose to minimize~\eqref{eq:minofvariance} iteratively.  But first, to make~\eqref{eq:minofvariance} more manageable, we upper-bound it using a norm defined by the weight matrix $\mW$ as follows
 \begin{equation}\label{eq:BupperWbnd}
 \|\mJ-\Jac(x)\|_{\mB}^2 \quad \leq \quad \rho \,\|\mJ-\Jac(x) \|_{\mW^{-1}}^2,
 \end{equation}
 where 
	\begin{equation}\label{eq:rhointro}
	\rho \eqdef \lambda_{\max}\left(\mW^{1/2} \mB \mW^{1/2}\right) \geq 0
	\end{equation}
is the largest eigenvalue of $\mW^{1/2} \mB \mW^{1/2}$.		We refer to the constant $\rho$ as the \emph{sketch residual}, and it is a key constant affecting the convergence rate of JacSketch as captured by Theorem~\ref{theo:convgen}. 	The sketch residual $\rho$ represents how much information is ``lost'' on average due to sketching and due to how well $\mW^{-1}$ approximates $\mB$.  We develop formulae and estimates of the sketch residual for several specific sketches of interest in Section~\ref{sec:computeconsts}.

\begin{example}[Zero sketch residual] Consider the setup from Example~\ref{ex:1} (gradient descent). That is, let $\mS$ be invertible with probability one and let $\theta_{\mS}=1$ be the bias-reducing variable. Then $\Proj_\mS \ones = \ones$ and hence
$\mB=0$, which means that $\rho =0$. 
\end{example}

\begin{example}[Large sketch residual] Consider the setup from Example~\ref{ex:2} (SAGA with non-uniform probabilities).  That is, let $\mS=e_i$ (unit basis vector in $\R^n$) with probability  $p_i>0$   and let $\mW = \mI$.  Then  $\theta_{e_i} = 1/p_i$  is a bias-reducing variable, and it is easy to show that $\mB  = \diag(1/p_1,\dots,1/p_n) - \ones \ones^\top$. If we choose $p_i=1/n$ for all $i$, then 
$\rho =n$. 
\end{example}
		
	We have switched from the $\mB$ norm to a user-controlled $\mW^{-1}$ norm because minimizing under the $\mB$ norm will prove to be impractical because $\mB$ is a dense matrix for most all practical sketches. With this norm change we now have the option to set $\mW$ as a sparse matrix (e.g., the identity, or a diagonal matrix), as we explain in Remark~\ref{rem:W} further down. However,  the theory we develop allows for any symmetric positive definite matrix $\mW$. 
	    

 We can now minimize~\eqref{eq:BupperWbnd} iteratively by only using  a single sketch of the true Jacobian at each iteration. Suppose we have a current estimate $\mJ^k$ of the true Jacobian and a sketch of the true Jacobian $\Jac(x^k) \mS_k$. With this we can calculate an improved Jacobian estimate using a projection step
 \begin{equation}
 \label{eq:jacupdateFdualintro} 
 \mJ^{k+1}= 
 \underset{\mJ \in \R^{d \times n}}{\arg} \underset{\mY \in \R^{ m \times \tau}}{\min\phantom{g}} \frac{1}{2}\norm{\mJ - \Jac(x^k)}_{\mW^{-1}}^2 \quad \mbox{subject to} \quad \mJ = \mJ^{k} + \mY \mS_k^\top \mW,
 \end{equation}
the solution of which, as it turns out,  depends on $\Jac(x^k)$  through its sketch  $\Jac(x^k) \mS_k$ only.  That is, we choose the next Jacobian estimate $\mJ^{k+1}$ as close as possible to the true Jacobian $\Jac(x^k)$ while restricted  to a matrix subspace that passes through $\mJ^k$. Thus in light of~\eqref{eq:BupperWbnd}, the variance is decreasing.   The explicit solution to~\eqref{eq:jacupdateFdualintro} is given by
 \begin{equation}\label{eq:jacobsol} 
 \mJ^{k+1} = \mJ^{k} -(\mJ^{k}-\Jac(x^k)) \Proj_{\mS_k}.
  \end{equation}
 See Lemma~\ref{lem:sol} in the appendix for the proof. Note that, as alluded to before, $\mJ^{k+1}$ depends on $\Jac(x^k)$ through its sketch only. Note that~\eqref{eq:jacobsol} updates the Jacobian estimate  by re-using the  sketch  $\Jac(x^k) \mS_k$ which we also use when calculating the stochastic gradient~\eqref{eq:controlgradJ}.

 Note that \eqref{eq:jacobsol} gives the same formula for $\mJ^{k+1}$ as \eqref{eq:jacobsolWintro} which we obtained by solving \eqref{eq:sketch_n_project}; i.e., by projecting $\mJ^k$ onto the solution set of \eqref{eq:sketching_equation}. This is not a coincidence. In fact, the optimization problems \eqref{eq:sketch_n_project} and \eqref{eq:jacupdateFdualintro}  are mutually dual. This is formally stated in Lemma~\ref{lem:sol} which can be found in the appendix.  In the context of solving linear systems, this was observed in \cite{Gower2015}. Therein,  \eqref{eq:sketch_n_project} is called the sketch-and-project method, whereas \eqref{eq:jacupdateFdualintro}  is called the {\em constrain-and-approximate} problem. In this sense, the Jacobian sketching narrative we followed in Section~\ref{sec:intro_jacsketch} is dual to the Jacobian sketching narrative we are pursuing here.

 
 \begin{remark}[On the weight matrix and the cost] \label{rem:W}
 Loosely speaking, the denser the weighting matrix $\mW$, the higher the computational cost  for updating the Jacobian using~\eqref{eq:jacobsol}. Indeed, the sparsity pattern of $\mW$ controls how many elements of the previous Jacobian estimate $\mJ^k$ need to be updated. This can be seen by re-arranging~\eqref{eq:jacobsol} as
\begin{equation}\label{eq:jacuprearrangY}
 \mJ^{k+1} = \mJ^{k} + \mY_k \mS_k^\top  \mW, \end{equation}
where
$ \mY_k = (\Jac(x^k) \mS_k-\mJ^{k} \mS_k) (\mS_k^\top \mW \mS_k)^{\dagger} \in \R^{d \times \tau}.$
Although we have no control over the sparsity of $\mY_k$, the matrix $\mS_k^\top \mW$ can be sparse when both $\mS_k$ and $\mW$ are sparse. This will be key in keeping the update~\eqref{eq:jacuprearrangY} at a cost propotional to $d \times \tau$, as oppossed to $n\times d$ when $\mW$ is dense.
This is why we consider a diagonal matrix $\mW = \diag(w_1,\ldots, w_n)$ in all of the special complexity results in Table~\ref{tbl:complexity_summary}. While it is clear that some   non-diagonal sparse matrices $\mW$ could also be used,  we leave such considerations to future work.
 \end{remark}

\subsection{JacSketch Algorithm}

   Combining formula~\eqref{eq:controlgradJ} for the stochastic gradient of the controlled stochastic reformulation with formula~\eqref{eq:jacobsol} for the update of the Jacobian estimate, we arrive at our JacSketch algorithm (Algorithm~\ref{alg:SketchJac}).

\begin{algorithm}
	\begin{algorithmic}[1]
		\State \textbf{Input:} $\left(\cD, \mW, \theta_{\mS} \right)$
		
		\State \textbf{Initialize:}  $x^0\in \R^d$, Jacobian estimate $\mJ^0 \in \R^{d \times n}$, stepsize $\alpha>0$ 		
		
		\For {$k =  0, 1, 2, \dots$}
		\State Sample a fresh copy $\mS_k\sim \cD$
		
		\State Calculate $\Jac(x^k) \mS_k$ \Comment{Sketch the Jacobian}

		\State $ \mJ^{k+1} = \mJ^{k} + (\Jac(x^k) - \mJ^{k}) \Proj_{\mS_k} = \mJ^k(\mI - \Proj_{\mS_k}) + \Jac(x^k)\Proj_{\mS_k}$
		 \label{ln:jacupdate}		
		\Comment Update Jacobian estimate

		\State $g^{k} =   \frac{1}{n}\mJ^k \ones +\frac{\theta_{\mS_k}}{n} (\Jac(x^k)-\mJ^{k} ) \Proj_{\mS_k} \ones = \frac{1-\theta_{\mS_k}}{n} \mJ^k \ones + \frac{\theta_{\mS_k}}{n} \mJ^{k+1} \ones$       \label{ln:gradupdate} \Comment Update gradient estimate

		\State $x^{k+1} = x^k - \alpha g^{k}$ \label{ln:xupdate}		
		\Comment Take a step
		 
		\EndFor
	\end{algorithmic}
	\caption{JacSketch: Variance Reduced Gradient Method via Jacobian Sketching}
	\label{alg:SketchJac}
\end{algorithm}
   
Typically, one should not implement the algorithm as presented above.  That is, we do not suggest that, say, in step 6, one explicitly multiplies $ \Jac(x^k)$ by $\Proj_{\mS_k}$ and $\mJ^k$ by $\Proj_{\mS_k}$ and then subtracts the latter from the former. Nor do we suggest that the result is then multiplied by $\ones$ and $\theta_{\mS_k}$ in step 7, and so on. The most efficient implementation of JacSketch will depend heavily on the the structure of $\mW$, distribution $\cD$ and so on. For instance, in the special case of minibatch SAGA, as presented in Section~\ref{sec:SAGA-intro}, the update of the Jacobian \eqref{eq:SAGA-Jacobian_update} has a particularly simple form. That is, we maintain a single matrix $\mJ\in \R^{d\times n}$ and keep replacing its columns by the appropriate stochastic gradients, as computed. Moreover, in the case of linear predictors, as is well known, a much more memory-efficient implementation is possible. In particular, if $f_i(x) = \phi_i(a_i^\top x)$ for some loss function $\phi_i$ and a data vector $a_i\in \R^d$ and all $i$, then $\nabla f_i(x) = \phi_i'(a_i^\top x) a_i$, which means that the gradient always points in the same direction. In such a situation, it is sufficient to keep track of the loss derivatives $ \phi_i'(a_i^\top x)$ only. Similar comments can be made about the step \eqref{eq:g^k-SAGA}  for computing the gradient estimate $g^k$.

From the point of view of the controlled stochastic reformulation, JacSketch can also be written in the form of Algorithm~\ref{alg:SketchJac2}.

\begin{algorithm}
	\begin{algorithmic}[1]
		\State \textbf{Input:} $\left(\cD, \mW, \theta_{\mS} \right)$
		
		\State \textbf{Initialize:}  $x^0\in \R^d$, Jacobian estimate $\mJ^0 \in \R^{d \times n}$, stepsize $\alpha>0$ 		
		
		\For {$k =  0, 1, 2, \dots$}
		\State Sample a fresh copy $\mS_k\sim \cD$

		\State $ \mJ^{k+1}  = \mJ^k(\mI - \Proj_{\mS_k}) + \Jac(x^k)\Proj_{\mS_k}$
		 \label{ln:jacupdate}		
		\Comment Update linear model

		\State $g^{k} =  \nabla f_{\mS_k, \mJ^k}(x^k)$       \label{ln:gradupdate} \Comment compute stochastic gradient for the controlled stochastic reformulation		
					
		\State $x^{k+1} = x^k - \alpha g^{k}$ \label{ln:xupdate}		
		\Comment Take a step
		 
		\EndFor
	\end{algorithmic}
	\caption{JacSketch: Variance Reduced Gradient Method via Jacobian Sketching}
	\label{alg:SketchJac2}
\end{algorithm}

\subsection{A window into biased estimates and SAG}
\label{sec:sketchproject}
We will now take a small detour from the main flow of the paper to develop an alternative viewpoint of Algorithm~\ref{alg:SketchJac} and also make a bridge to biased methods such as SAG~\cite{SAG}. 

The simple observation that 
\begin{equation} \label{eq:Jac1grad}
\nabla f(x^k) = \frac{1}{n}\Jac(x^k)\ones,\end{equation}
suggests that $\hat{g}^k = \frac{1}{n} \mJ^{k+1} \ones$, where $ \mJ^{k+1} \approx \Jac(x^k)$ would give a good estimate of the gradient. To decrease the variance of $\hat{g}^k$, we can also use the same update of the Jacobian estimate~\eqref{eq:jacobsol}  since
 \begin{eqnarray}
 \E{\norm{\hat{g}^k - \nabla f(x^k)}_2^2} &= & 
\frac{1}{n^2} \E{\norm{( \mJ^{k+1} -\Jac(x^k)) \ones }_2^2} \notag \\
 &= &  \frac{1}{n^2} \E{\norm{( \mJ^{k+1} -\Jac(x^{k})) \mW^{-1/2} \mW^{1/2} \ones }_2^2}\nonumber \\
 &\leq  & \frac{\ones^\top \mW \ones}{ n^2} \E{\norm{ \mJ^{k+1} -\Jac(x^{k})}_{\mW^{-1}}^2}.\nonumber
 \end{eqnarray}
 Thus, if $\E{\norm{ \mJ^{k+1} -\Jac(x^{k})}_{\mW^{-1}}^2}$ converges to zero, so will $ \E{\norm{\hat{g}^k - \nabla f(x^k)}_2^2}.$
Though unfortunately, the combination of the gradient estimate  $\hat{g}^k = \frac{1}{n} \mJ^{k+1} \ones$ and a Jacobian estimate updated via~\eqref{eq:jacobsol} will almost always  give a biased estimator.
For example, if we define $\cD$ by setting $\mS= e_i$ with probability $\frac{1}{n}$ and let $\mW = \mI$, then we recover the celebrated SAG method~\cite{SAG} and its biased estimator of the gradient.

The issue with using $\frac{1}{n} \mJ^{k+1} \ones$  as an estimator of the gradient is that it decreases the variance too aggressively,  neglecting the bias. However, this can be fixed  by trading off variance for bias. One way to do this is to introduce  the random variable $\theta_{\mS}$ as a {\em stochastic relaxation parameter}
\begin{equation}\label{eq:gk}
\hat{g}^{k} =   \frac{1-\theta_{\mS_k}}{n}\mJ^{k}\ones +\frac{\theta_{\mS_k}}{n}  \mJ^{k+1} \ones.
\end{equation}

If $\theta_{\mS}$ is bias correcting, we recover the unbiased SAGA estimator \eqref{eq:g^k=intro}. By allowing $\theta_{\mS}$ to be closer to one, however, we will get more bias and lower variance. We leave this strategy of building biased estimators for future work. It is conceivable that SAG could be analyzed using reasonably small modifications of the tools developed in this paper. Doing this would be important due to at least four reasons: i) SAG was the first variance-reduced method for problem~\eqref{eq:prob}, ii) the existing analysis of SAG is not satisfying, iii) one may be able to obtain a better rate, iv) one may be able to develop and analyze novel variants of SAG.
\section{Convergence Analysis for General Sketches}
\label{sec:theogen}

In this section we establish a convergence theorem (Theorem~\ref{theo:convgen}) which applies to general sketching matrices $\mS$ (that is, arbitrary distributions $\cD$ from which they are sampled). By design, we keep the setting in this section general, and only deal with specific instantiations and special cases in Section~\ref{sec:minibatch_sketches}.

\subsection{Two expected smoothness constants} \label{sec:L1-L2}

We first formulate two {\em expected smoothness} assumptions tying together $f$, its Jacobian $\Jac(x)$ and the distribution $\cD$ from which we pick sketch matrices $\mS$. These assumptions, and the associated expected smoothness constants, play a key role in the convergence result.

Our first assumption concerns the expected smoothness of the stochastic gradients $\nabla f_{\mS}$ of the stochastic reformulation \eqref{eq:probstochvarh}.\footnote{A similar relation to \eqref{eq:ES1}  holds for the stochastic optimization reformulation of linear systems studied by Richt\'{a}rik and Tak\'{a}\v{c}~\cite{ASDA}. Therein, this relation holds as an identity with $\cL_1=1$ (see Lemma 3.3 in \cite{ASDA}). However, the function $f_{\mS}$ considered there is entirely different and, moreover,  $f(x^*)=0$ and $\nabla f_{\mS}(x^*)=0$ for all $\mS$. }

\begin{assumption}[Expected smoothness of the stochastic gradient] \label{ass:ES1} There is a constant $\cL_1>0$ such that
\begin{equation} \label{eq:ES1} \ED{ \norm{ \nabla f_{\mS}(x) - \nabla f_{\mS}(x^*)}_2^2 } \leq 2  \cL_1 (f(x)-f(x^*)), \qquad \forall x\in \R^d.\end{equation}
\end{assumption}

It is easy to see from \eqref{eq:stochgradplain} and \eqref{eq:controlgradJ}  that  \begin{equation}\label{eq:8gd088gs899}\norm{\nabla f_{\mS}(x) - \nabla f_{\mS}(y)}_2^2 = \tfrac{1}{n^2}\| (\Jac(x) - \Jac(y))\theta_{\mS} \Proj_{\mS} \ones \|_2^2 =\norm{\nabla f_{\mS, \mJ}(x) - \nabla f_{\mS,\mJ}(y)}_2^2\end{equation} for all $\mJ\in \R^{d\times n}$ and $x,y\in \R^d$, and hence the expected smoothness assumption can equivalently be understood from the point of view of the controlled stochastic reformulation. The above assumption is not particularly restrictive. Indeed, in Theorem~\ref{lem:Lfhatuni} we provide formulae for $\cL_1$ for smooth functions $f$ and for a class of minibatch samplings $\mS=\mI_S$. These formulae can  be seen as proofs that Assumption~\ref{ass:ES1} is satisfied for a large class of practically relevant sketches $\mS$ and functions $f$.  However, as we have seen when discussing the complexity results summarized  in Table~\ref{tbl:complexity_summary}, these formulae are also useful in our interpretation of the resulting convergence rates of variants of JacSketch.

Our second expected smoothness assumption concerns the Jacobian of $F$.

\begin{assumption}[Expected smoothness of the Jacobian] \label{ass:ES2}  There is a constant $\cL_2>0$ such that 		
\begin{equation}
	\ED{\norm{(\Jac(x)-\Jac(x^*)) \Proj_{\mS}  }_{\mW^{-1}}^2 } \leq  2\cL_2 (f(x) -f(x^*)), \qquad \forall x\in \R^d, \label{eq:ES2}
 \end{equation}
 	where the norm is the weighted Frobenius norm defined in \eqref{eq:fro_norm}.
\end{assumption}

It is easy to see (see Lemma~\ref{lem:bo98gd}, Eq.~\eqref{eq:ih0shg09s}) that for any matrix $\mM\in \R^{d\times n}$, we have
$\ED{\|\mM \Proj_{\mS}\|_{\mW^{-1}}^2} = \|\mM\|_{\ED{\mH_{\mS}}}^2,$ where \begin{equation}\label{eq:H_S}\mH_\mS \eqdef \mS (\mS^\top \mW \mS^\top)^\dagger \mS^\top \overset{\eqref{eq:PSdef} }{=} \Proj_{\mS} \mW^{-1}.\end{equation} Therefore, \eqref{eq:ES2} can be equivalently written in the form 
\begin{equation}
\norm{\Jac(x)-\Jac(x^*)   }_{\ED{\mH_{\mS}}}^2  \leq  2\cL_2 (f(x) -f(x^*)), \qquad \forall x\in \R^d, \label{eq:ES2-equiv}
 \end{equation}
which suggests that the above condition indeed measures the variation/smoothness of the Jacobian under a specific weighted Frobenius norm. To the best of our knowledge, the above expected smoothness conditions are new, and have not been considered in the literature before. 

\subsection{Stochastic condition number} \label{sec:kappa}

By the {\em stochastic condition number} associated with $\mW$ and $\cD$ we mean the constant defined by
		\begin{equation} \label{eq:kappa}
	 \kappa = \kappa(\cD,\mW) \eqdef \lambda_{\min} (\ED{\Proj_\mS}).
		\end{equation}
In the next lemma we show that $0 \leq \kappa \leq 1$ for all distributions $\cD$ for which the expectation \eqref{eq:kappa} exists.

\begin{lemma} \label{lem:stochcondition}
For all distributions $\mathcal{D},$ we have the bounds $
0 \leq \kappa \leq 1.
$
\end{lemma}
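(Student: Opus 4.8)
The plan is to show the two inequalities $\kappa \geq 0$ and $\kappa \leq 1$ separately, both relying on the fact (established in Lemma~\ref{lem:sol} and its surrounding discussion) that $\Proj_\mS = \mS(\mS^\top \mW \mS)^\dagger \mS^\top \mW$ is a projection matrix, more precisely the $\mW$-orthogonal projector onto $\Range(\mS)$. First I would record the key structural fact: the matrix $\mW^{1/2}\Proj_\mS \mW^{-1/2}$ is symmetric, and in fact it equals $\mW^{1/2}\mS(\mS^\top \mW \mS)^\dagger \mS^\top \mW^{1/2}$, which is the standard (Euclidean) orthogonal projector onto $\Range(\mW^{1/2}\mS)$. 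Call this matrix $\mP_\mS$; then $\mP_\mS = \mP_\mS^\top = \mP_\mS^2$, so its eigenvalues are all $0$ or $1$, and hence $0 \preceq \mP_\mS \preceq \mI$. Since $\Proj_\mS$ and $\mP_\mS$ are similar via $\mW^{1/2}$, they share eigenvalues, but for the eigenvalue bounds on the \emph{expectation} it is cleaner to work with the symmetric matrix $\mP_\mS$ throughout.

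For the lower bound, I would note that $\ED{\mP_\mS} \succeq 0$ because it is an expectation (convex combination, in the finite case, or integral) of positive semidefinite matrices $\mP_\mS \succeq 0$; the cone of PSD matrices is closed under such averaging. Taking $\mW^{-1/2}(\cdot)\mW^{-1/2}$ congruence, this gives $\ED{\Proj_\mS} = \mW^{-1/2}\ED{\mP_\mS}\mW^{-1/2}$ up to the appropriate conjugation, but since congruence preserves the sign of eigenvalues (Sylvester's law of inertia), $\lambda_{\min}(\ED{\Proj_\mS}) \geq 0$. Actually it is even more direct: $\ED{\Proj_\mS}$ has the same eigenvalues as $\ED{\mP_\mS} \succeq 0$, so $\kappa = \lambda_{\min}(\ED{\Proj_\mS}) = \lambda_{\min}(\ED{\mP_\mS}) \geq 0$. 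For the upper bound, from $\mP_\mS \preceq \mI$ for every realization of $\mS$, monotonicity of expectation in the Löwner order gives $\ED{\mP_\mS} \preceq \mI$, hence $\lambda_{\max}(\ED{\mP_\mS}) \leq 1$, and in particular $\lambda_{\min}(\ED{\mP_\mS}) \leq \lambda_{\max}(\ED{\mP_\mS}) \leq 1$. Therefore $\kappa \leq 1$.

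The only mildly delicate point — and the step I would expect to need the most care — is justifying that $\lambda_{\min}(\ED{\Proj_\mS}) = \lambda_{\min}(\ED{\mP_\mS})$, i.e., that passing through the $\mW^{1/2}$ similarity commutes with taking the expectation, together with the measurability/integrability hypothesis implicit in ``the expectation \eqref{eq:kappa} exists.'' Since $\mW$ is deterministic, $\ED{\mP_\mS} = \ED{\mW^{1/2}\Proj_\mS \mW^{-1/2}} = \mW^{1/2}\ED{\Proj_\mS}\mW^{-1/2}$ by linearity of expectation, so $\ED{\Proj_\mS}$ and $\ED{\mP_\mS}$ are genuinely similar and share their spectrum; this is routine once stated. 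Everything else is a one-line consequence of the spectral theorem for the projector $\mP_\mS$ and of monotonicity of the Löwner order under expectation, so I would keep the write-up to a few sentences.
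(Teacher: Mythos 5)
Your proposal is correct and follows essentially the same route as the paper: the paper also observes that $\mW^{1/2}\mH_\mS\mW^{1/2} = \mW^{1/2}\Proj_\mS\mW^{-1/2}$ (your $\mP_\mS$) is the orthogonal projector onto $\Range(\mW^{1/2}\mS)$, uses $0 \preceq \mP_\mS \preceq \mI$, takes expectation, and transfers the eigenvalue bounds back to $\ED{\Proj_\mS}$ via the $\mW^{1/2}$ similarity. The only cosmetic difference is that the paper phrases the conclusion through $\lambda_{\max}(\mI - \mW^{1/2}\ED{\mH_\mS}\mW^{1/2}) = 1 - \lambda_{\min}(\mW^{1/2}\ED{\mH_\mS}\mW^{1/2})$, while you state the Löwner-order and similarity steps explicitly.
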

\begin{proof}
It is not difficult to show that $\mW^{1/2} \mH_\mS \mW^{1/2}$ is the orthogonal projection matrix that projects onto $\Range{\mW^{1/2} \mS}$. Consequently, $0 \preceq \mW^{1/2} \mH_\mS \mW^{1/2} \preceq \mI$ and, after taking expectation, we get
$
0 \preceq \mW^{1/2}\ED{\mH_\mS} \mW^{1/2} \preceq \mI.
$
Finally, this implies that
\begin{equation} \label{eq:lambdamaxminkappa}
 0\leq \lambda_{\max}(\mI - \mW^{1/2}\ED{\mH_\mS} \mW^{1/2}) = 1 - \lambda_{\min}( \mW^{1/2}\ED{\mH_\mS} \mW^{1/2}) \leq 1.
\end{equation}
 \hfill \qed
\end{proof}

In our convergence theorem we will assume that $\kappa>0$. This can be achieved by choosing a suitable distribution $\cD$ and it holds trivially for all the examples we develop. The condition $\kappa>0$ essentially says that the distribution is sufficiently rich. This condition number was first proposed in \cite{Gower2015} in the context of randomized algorithms for solving linear systems. We refer the reader to that work for details on sufficient assumptions about $\cD$ guaranteeing $\kappa>0$.  Below we give an  example.

\begin{example} Let $\mW\succ 0$, and let $\cD$ be given by setting $\mS = e_i$ with probability $p_i>0$. Then 
\begin{eqnarray*} \kappa  &\overset{\eqref{eq:kappa}}{=}&  \lambda_{\min}\left( \mW^{1/2}\ED{\Proj_{\mS}}\mW^{-1/2}\right) \quad = \quad \lambda_{\min}\left(\sum_{i=1}^n \frac{p_i}{e_i^\top \mW e_i} \mW^{1/2} e_i  e_i^\top \mW^{1/2} \right).
\end{eqnarray*}
Since the vectors $\mW^{1/2}e_i$ span $\R^n$ and $p_i>0$ for all $i$, the matrix is positive definite and hence $\kappa>0$. In particular, when $\mW=\mI$, then the expected projection matrix is equal to $\diag(p_1,\dots,p_n)$ and $\kappa = \min_i p_i>0$. If instead of unit basis vectors $\{e_i\}$ we use vectors that span $\R^n$, using similar arguments we can also conclude that $\kappa>0$.
\end{example}

\subsection{Convergence theorem} \label{sec:theoremONE}

Our main convergence result, which we shall present shortly, holds for $\mu$-strongly convex functions. However, it turns out we can establish the result for a somewhat larger family of functions. This family is described next.
 
\begin{assumption}[One point strong convexity] \label{ass:str_convex_at_solution} Function $f$ for some $\mu>0$ satisfies
\begin{equation} \label{eq:strconv}
 f(x^*) \geq f(x) + \dotprod{\nabla f(x), x^*-x} + \frac{\mu}{2} \norm{x^*-x}_{2}^2, \qquad \forall x \in \R^d.
\end{equation}
\end{assumption}

We are now ready to present the main result of this section. 

\begin{theorem}[Convergence of JacSketch for General Sketches] \label{theo:convgen} Let $\mW\succ 0$. Let $f$ satisfy Assumption~\ref{ass:str_convex_at_solution}. Let Assumption~\ref{def:sketch} be satisfied (i.e, $\mS$ is an unbiased sketch and $\theta_{\mS}$ is the associated bias-correcting random variable).   Let the expected smoothness assumptions be satisfied: Assumption~\ref{ass:ES1}  and Assumption~\ref{ass:ES2}. Assume that $\kappa>0$.  Let the sketch residual be defined as  in \eqref{eq:rhointro}, i.e, 
	\begin{equation}\label{eq:theorhoXX}
	\rho = \rho(\theta_{\mS},\cD, \mW) \overset{\eqref{eq:rhointro}}{=} \lambda_{\max}\left( \mW^{1/2}\left( \ED{\theta_{\mS}^2 \Proj_{\mS} \ones \ones^\top \Proj_{\mS}} -\ones \ones^\top\right) \mW^{1/2}\right) \geq 0.
	\end{equation}

Choose any $x^0\in \R^d$ and $\mJ^0\in \R^{d\times n}$. Let $\{x^k,\mJ^k\}_{k\geq 0}$ be the random iterates produced by JacSketch (Algorithm~\ref{alg:SketchJac}). Consider the Lyapunov function
\begin{equation}\label{eq:lyapgen}
\Psi^k \eqdef \norm{x^k-x^*}_2^2 +  \frac{\alpha}{2 \cL_2} \norm{\mJ^{k} -\Jac(x^*)}_{\mW^{-1}}^2.
\end{equation}
   If the stepsize satisfies
	\begin{equation}
	0 \leq \alpha \leq \min\left\{ \frac{1}{4 \cL_1 }, \, \frac{\kappa}{4 \cL_2 \rho/n^2   +\mu  }\right\},
	\label{eq:alphaboundXX}
	\end{equation}
	then 
	\begin{equation} \label{eq:conv2XX}
	\E{\Psi^{k}} \quad \leq \quad (1-\mu \alpha)^k \cdot \Psi^0,
	\end{equation}
If we choose $\alpha$ to be equal to the upper bound in \eqref{eq:alphaboundXX}, then
	\begin{equation}\label{eq:itercomplexgen}
	k\geq \max \left\{ \frac{4\cL_1}{\mu}, \; \frac{1}{\kappa} + \frac{4\rho \cL_2}{\kappa \mu n^2}  \right\} \log\left (\frac{1}{\epsilon} \right) \quad \Rightarrow \quad \E{\Psi^{k}}  \leq \epsilon \Psi^0.\end{equation}
\end{theorem}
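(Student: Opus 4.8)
The plan is to prove a one-step contraction of the Lyapunov function, $\ED{\Psi^{k+1}} \leq (1-\mu\alpha)\,\Psi^k$, conditionally on $(x^k,\mJ^k)$ (so that $\mS_k$, which is independent of $\mJ^k$, is the only randomness). Taking total expectations and unrolling then gives \eqref{eq:conv2XX}, and \eqref{eq:itercomplexgen} follows from $(1-\mu\alpha)^k \leq e^{-\mu\alpha k}$ together with the identity $\tfrac{1}{\mu\alpha} = \max\{4\cL_1/\mu,\; 1/\kappa + 4\rho\cL_2/(\kappa\mu n^2)\}$, obtained by substituting the extremal stepsize from \eqref{eq:alphaboundXX}. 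First I would expand the iterate term: since $x^{k+1} = x^k - \alpha g^k$ and $g^k$ is unbiased (see \eqref{eq:unbiasedgrad}, with $g^k = \nabla f_{\mS_k,\mJ^k}(x^k)$),
\[
\ED{\norm{x^{k+1}-x^*}_2^2} = \norm{x^k - x^*}_2^2 - 2\alpha\dotprod{\nabla f(x^k),\, x^k - x^*} + \alpha^2 \ED{\norm{g^k}_2^2},
\]
and one-point strong convexity (Assumption~\ref{ass:str_convex_at_solution}) bounds $\dotprod{\nabla f(x^k),x^k-x^*} \geq f(x^k)-f(x^*) + \tfrac{\mu}{2}\norm{x^k-x^*}_2^2$.

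The two substantive estimates are the bound on $\ED{\norm{g^k}_2^2}$ and the recursion for the Jacobian term. For the former I would split $g^k = \big(\nabla f_{\mS_k,\mJ^k}(x^k) - \nabla f_{\mS_k,\mJ^k}(x^*)\big) + \nabla f_{\mS_k,\mJ^k}(x^*)$ and use $\norm{a+b}_2^2 \leq 2\norm{a}_2^2 + 2\norm{b}_2^2$. By \eqref{eq:8gd088gs899} the first piece does not depend on $\mJ^k$ and is bounded by $2\cdot 2\cL_1(f(x^k)-f(x^*))$ via Assumption~\ref{ass:ES1}; for the second piece I would use $\nabla f(x^*)=0$ to rewrite $\nabla f_{\mS,\mJ}(x^*) = \tfrac1n (\mJ-\Jac(x^*))(\mI-\theta_\mS\Proj_\mS)\ones$, so that $\ED{\norm{\nabla f_{\mS_k,\mJ^k}(x^*)}_2^2} = \tfrac1{n^2}\norm{\mJ^k-\Jac(x^*)}_{\mB}^2 \leq \tfrac{\rho}{n^2}\norm{\mJ^k-\Jac(x^*)}_{\mW^{-1}}^2$ by the definition of $\mB$ in \eqref{eq:Bmat} and the sketch-residual inequality \eqref{eq:BupperWbnd}. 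This yields $\ED{\norm{g^k}_2^2} \leq 4\cL_1(f(x^k)-f(x^*)) + \tfrac{2\rho}{n^2}\norm{\mJ^k-\Jac(x^*)}_{\mW^{-1}}^2$. For the Jacobian term, the update in Algorithm~\ref{alg:SketchJac} gives $\mJ^{k+1}-\Jac(x^*) = (\mJ^k-\Jac(x^*))(\mI-\Proj_{\mS_k}) + (\Jac(x^k)-\Jac(x^*))\Proj_{\mS_k}$; since $\mW^{1/2}\Proj_{\mS_k}\mW^{-1/2}$ is an orthogonal projector (as in the proof of Lemma~\ref{lem:stochcondition}), the two summands are $\mW^{-1}$-orthogonal, giving a Pythagorean identity. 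The first squared term equals $\norm{\mJ^k-\Jac(x^*)}_{\mW^{-1}}^2 - \norm{(\mJ^k-\Jac(x^*))\Proj_{\mS_k}}_{\mW^{-1}}^2$, and $\ED{\norm{(\mJ^k-\Jac(x^*))\Proj_{\mS_k}}_{\mW^{-1}}^2} = \norm{\mJ^k-\Jac(x^*)}_{\ED{\mH_{\mS_k}}}^2 \geq \kappa\,\norm{\mJ^k-\Jac(x^*)}_{\mW^{-1}}^2$ by \eqref{eq:H_S} and $\lambda_{\min}(\mW^{1/2}\ED{\mH_\mS}\mW^{1/2}) = \kappa$; the second squared term is at most $2\cL_2(f(x^k)-f(x^*))$ by Assumption~\ref{ass:ES2}. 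Hence $\ED{\norm{\mJ^{k+1}-\Jac(x^*)}_{\mW^{-1}}^2} \leq (1-\kappa)\norm{\mJ^k-\Jac(x^*)}_{\mW^{-1}}^2 + 2\cL_2(f(x^k)-f(x^*))$.

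Finally I would assemble $\ED{\Psi^{k+1}}$ and collect coefficients. The $\norm{x^k-x^*}_2^2$ coefficient is exactly $1-\mu\alpha$; the $(f(x^k)-f(x^*))$ coefficient is $-2\alpha + 4\alpha^2\cL_1 + \alpha = -\alpha(1-4\alpha\cL_1) \leq 0$ precisely when $\alpha \leq 1/(4\cL_1)$; and the $\norm{\mJ^k-\Jac(x^*)}_{\mW^{-1}}^2$ coefficient is $\tfrac{\alpha}{2\cL_2}\big(1-\kappa + 4\alpha\rho\cL_2/n^2\big)$, which is $\leq (1-\mu\alpha)\tfrac{\alpha}{2\cL_2}$ precisely when $\alpha(4\rho\cL_2/n^2 + \mu) \leq \kappa$ — these are exactly the two constraints in \eqref{eq:alphaboundXX}. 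Dropping the non-positive $f$-term leaves $\ED{\Psi^{k+1}} \leq (1-\mu\alpha)\Psi^k$, completing the argument as described above. I expect the bound on $\ED{\norm{g^k}_2^2}$ to be the main obstacle: one must split $g^k$ around the \emph{optimal} pair $(x^*,\Jac(x^*))$ so that each piece vanishes at optimality, then route one piece through the expected-smoothness constant $\cL_1$ and the other through the sketch residual $\rho$, getting the $1/n^2$ scaling and the switch from the $\mB$-norm to the $\mW^{-1}$-norm right. The strong-convexity step, the Pythagorean split of the Jacobian update, and the final coefficient matching are then essentially bookkeeping.
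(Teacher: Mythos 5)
Your proposal is correct and follows essentially the same route as the paper: the same Lyapunov function, the same one-step expansion via unbiasedness and one-point strong convexity, the same decomposition of $g^k$ around $(x^*,\Jac(x^*))$ yielding the bound $4\cL_1(f(x^k)-f(x^*)) + \tfrac{2\rho}{n^2}\norm{\mJ^k-\Jac(x^*)}_{\mW^{-1}}^2$ (Lemma~\ref{lem:gradient_bounddeltaXX}), the same orthogonality/contraction argument for the Jacobian term (Lemmas~\ref{lem:bo98gd} and~\ref{lem:jac-contractXX}), and identical coefficient matching leading to the two stepsize conditions in \eqref{eq:alphaboundXX}. The only cosmetic difference is that you obtain the $\rho$-term by invoking the $\mB$-norm identity \eqref{eq:minofvariance} together with \eqref{eq:BupperWbnd}, whereas the paper redoes that trace/eigenvalue computation inside the proof of Lemma~\ref{lem:gradient_bounddeltaXX}; the content is the same.
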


Recall that the  iteration complexity expression from \eqref{eq:itercomplexgen} is listed in row 1 of Table~\ref{tbl:complexity_summary}.

 The Lyapunov function we use is simply the sum of the squared distance between $x^k$ to the optimal $x^*$ and the distance of our Jacobian estimate $\mJ^k$ to the optimal Jacobian $\Jac(x^*).$ Hence, the theorem says that both the iterates $\{x^k\}$ and the Jacobian estimates $\{\mJ^k\}$ converge.

\subsection{Projection lemmas and the stochastic condition number $\kappa$}
\label{sec:projcond}

In this section we collect some basic results on  projections. Recall from \eqref{eq:PSdef} that $\Proj_\mS = \mS (\mS^\top \mW \mS)^{\dagger} \mS^\top  \mW$ and from \eqref{eq:H_S} that $\mH_\mS = \mS (\mS^\top \mW \mS)^{\dagger} \mS^\top $.
\begin{lemma} \label{lem:proj}
	\begin{equation}\label{eq:proj}
	\Proj_\mS \mW^{-1}(\mI-\Proj_\mS)^\top = 0.
	\end{equation}
Furthermore,
\begin{equation}\label{eq:projext}\ED{\Proj_\mS \mW^{-1}\Proj_\mS^\top} = \ED{\mH_\mS} \quad \text{and}\quad \ED{(\mI-\Proj_\mS) \mW^{-1} (\mI-\Proj_\mS)^\top} = \mW^{-1} - \ED{\mH_\mS}.\end{equation}
\end{lemma}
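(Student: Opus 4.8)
The plan is to prove the three identities in turn, relying on the basic structure of $\Proj_\mS = \mS(\mS^\top\mW\mS)^\dagger\mS^\top\mW$ and $\mH_\mS = \mS(\mS^\top\mW\mS)^\dagger\mS^\top$, together with standard properties of the Moore--Penrose pseudoinverse. The key algebraic fact I would isolate first is the \emph{projection identity} $\Proj_\mS \mS = \mS$, equivalently $\mS(\mS^\top\mW\mS)^\dagger\mS^\top\mW\mS = \mS$; this follows from the pseudoinverse identity $A^\dagger A A^\dagger = A^\dagger$ applied with $A = \mS^\top\mW\mS$, combined with the fact that $\mW^{1/2}\mS(\mS^\top\mW\mS)^\dagger\mS^\top\mW^{1/2}$ is the orthogonal projector onto $\Range(\mW^{1/2}\mS)$ (symmetric, idempotent), which is already invoked in the proof of Lemma~\ref{lem:stochcondition} above. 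I would also record the consequence $\Proj_\mS\mW^{-1}\Proj_\mS^\top = \mH_\mS$: indeed $\Proj_\mS\mW^{-1}\Proj_\mS^\top = \mS(\mS^\top\mW\mS)^\dagger\mS^\top\mW\mW^{-1}\mW\mS(\mS^\top\mW\mS)^\dagger\mS^\top = \mS(\mS^\top\mW\mS)^\dagger(\mS^\top\mW\mS)(\mS^\top\mW\mS)^\dagger\mS^\top = \mS(\mS^\top\mW\mS)^\dagger\mS^\top = \mH_\mS$, using $A^\dagger A A^\dagger = A^\dagger$ once more. Note $\mH_\mS$ is symmetric, so $\Proj_\mS\mW^{-1}\Proj_\mS^\top = \mH_\mS = \Proj_\mS\mW^{-1} = \mW^{-1}\Proj_\mS^\top$ as well (the last two from the definition $\mH_\mS = \Proj_\mS\mW^{-1}$).

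For \eqref{eq:proj}, I would expand $\Proj_\mS\mW^{-1}(\mI-\Proj_\mS)^\top = \Proj_\mS\mW^{-1} - \Proj_\mS\mW^{-1}\Proj_\mS^\top = \mH_\mS - \mH_\mS = 0$, using the identities just established. For the first part of \eqref{eq:projext}, the identity $\ED{\Proj_\mS\mW^{-1}\Proj_\mS^\top} = \ED{\mH_\mS}$ is then immediate by taking expectations of $\Proj_\mS\mW^{-1}\Proj_\mS^\top = \mH_\mS$. For the second part, I would expand
\begin{equation*}
(\mI-\Proj_\mS)\mW^{-1}(\mI-\Proj_\mS)^\top = \mW^{-1} - \Proj_\mS\mW^{-1} - \mW^{-1}\Proj_\mS^\top + \Proj_\mS\mW^{-1}\Proj_\mS^\top,
\end{equation*}
and then substitute $\Proj_\mS\mW^{-1} = \mW^{-1}\Proj_\mS^\top = \Proj_\mS\mW^{-1}\Proj_\mS^\top = \mH_\mS$, so that three of the four terms collapse: the right-hand side equals $\mW^{-1} - \mH_\mS - \mH_\mS + \mH_\mS = \mW^{-1} - \mH_\mS$. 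Taking expectations gives $\mW^{-1} - \ED{\mH_\mS}$.

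I do not anticipate a genuine obstacle here; the lemma is essentially bookkeeping with pseudoinverses. The one point requiring a little care is justifying the collapse identities $\Proj_\mS\mW^{-1} = \mW^{-1}\Proj_\mS^\top = \Proj_\mS\mW^{-1}\Proj_\mS^\top$, i.e. that $\mH_\mS$ is symmetric and idempotent-like under the $\mW^{-1}$ pairing; this rests on the symmetry of $\mW^{1/2}\mH_\mS\mW^{1/2}$, which is the orthogonal projector onto $\Range(\mW^{1/2}\mS)$ — the same fact the paper already uses. So the cleanest route is to first prove $\mW^{1/2}\mH_\mS\mW^{1/2} = (\mW^{1/2}\mH_\mS\mW^{1/2})^\top = (\mW^{1/2}\mH_\mS\mW^{1/2})^2$, deduce $\mH_\mS^\top = \mH_\mS$ and $\mH_\mS\mW\mH_\mS = \mH_\mS$, and then everything else is substitution. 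I would present the argument in that order: (i) projector properties of $\mW^{1/2}\mH_\mS\mW^{1/2}$; (ii) the collapse identities for $\Proj_\mS$ and $\mH_\mS$; (iii) the three displayed conclusions.
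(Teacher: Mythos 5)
Your proposal is correct and follows essentially the same route as the paper: the core step is the pseudoinverse identity $\mA^\dagger\mA\mA^\dagger=\mA^\dagger$ giving $\Proj_\mS\mW^{-1}\Proj_\mS^\top=\Proj_\mS\mW^{-1}=\mH_\mS$, from which \eqref{eq:proj} and both expectation identities follow by expansion and linearity. The only differences are cosmetic (you expand the second identity into four terms rather than reusing \eqref{eq:proj}, and justify symmetry of $\mH_\mS$ via the projector $\mW^{1/2}\mH_\mS\mW^{1/2}$ instead of the symmetry of $(\mS^\top\mW\mS)^\dagger$), so no changes are needed.
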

\begin{proof}
Using the pseudoinverse property $\mA^\dagger \mA \mA^\dagger = \mA^\dagger$ we have that
\begin{equation} \label{eq:temp8h8e8}\Proj_\mS \mW^{-1}\Proj_\mS^\top \overset{\eqref{eq:PSdef}}{=} \mS (\mS^\top \mW \mS)^{\dagger} \mS^\top  \mW \mS (\mS^\top \mW  \mS)^{\dagger} \mS^\top   \overset{\eqref{eq:H_S}}{=} \Proj_\mS \mW^{-1}  = \mH_\mS, \end{equation}
and as a consequence~\eqref{eq:proj} holds.
Moreover,
\begin{equation} \label{eq:temp8h8e82}(\mI-\Proj_\mS) \mW^{-1} (\mI-\Proj_\mS)^\top \overset{\eqref{eq:proj}}{=}  \mW^{-1} (\mI-\Proj_\mS)^\top  \overset{\eqref{eq:H_S}}{=} \mW^{-1} - \mH_\mS.\end{equation}
Finally, taking expectation over~\eqref{eq:temp8h8e8} and~\eqref{eq:temp8h8e82} gives~\eqref{eq:projext}. \hfill \qed 
\end{proof}

\begin{lemma} \label{lem:bo98gd}
	For any matrices $\mM, \mN \in \R^{d\times n}$ we have the identities
	\[\norm{\mM ( \mI-\Proj_\mS) + \mN \Proj_\mS}_{\mW^{-1}}^2 = \norm{\mM (\mI-\Proj_\mS)}_{\mW^{-1}}^2 +  \norm{\mN \Proj_\mS}_{\mW^{-1}}^2\]
	and
		\begin{equation}\label{eq:ih0shg09s}\ED{\norm{ \mN \Proj_\mS}_{\mW^{-1}}^2} =  \norm{\mN}_{\ED{\mH_\mS}}^2.\end{equation}
	Furthermore,
\begin{equation} \label{eq:orthoexpka} \ED{\norm{\mM ( \mI-\Proj_\mS) + \mN \Proj_\mS}_{\mW^{-1}}^2} \leq  (1-\kappa) \| \mM\|_{\mW^{-1}}^2 + \| \mN\|_{\ED{\mH_\mS}}^2. \end{equation}
\end{lemma}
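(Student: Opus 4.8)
\textbf{Proof plan for Lemma~\ref{lem:bo98gd}.}

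The plan is to prove the three claims in order, using the projection identities from Lemma~\ref{lem:proj} and the definition \eqref{eq:H_S} of $\mH_\mS$. First I would establish the Pythagorean-type decomposition. Expanding the weighted Frobenius norm via $\norm{\mX}_{\mW^{-1}}^2 = \Tr{\mX \mW^{-1} \mX^\top}$ and using bilinearity, the cross term is $2\Tr{\mM(\mI-\Proj_\mS)\mW^{-1}\Proj_\mS^\top \mN^\top}$. By \eqref{eq:proj} from Lemma~\ref{lem:proj}, namely $\Proj_\mS \mW^{-1}(\mI-\Proj_\mS)^\top = 0$, the transpose of this is $(\mI-\Proj_\mS)\mW^{-1}\Proj_\mS^\top = 0$, so the cross term vanishes and we are left with $\norm{\mM(\mI-\Proj_\mS)}_{\mW^{-1}}^2 + \norm{\mN\Proj_\mS}_{\mW^{-1}}^2$. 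This is the first identity.

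Next, for \eqref{eq:ih0shg09s}, I would write $\norm{\mN\Proj_\mS}_{\mW^{-1}}^2 = \Tr{\mN\Proj_\mS\mW^{-1}\Proj_\mS^\top\mN^\top}$ and invoke \eqref{eq:temp8h8e8} from the proof of Lemma~\ref{lem:proj} (or \eqref{eq:projext}), which gives $\Proj_\mS\mW^{-1}\Proj_\mS^\top = \mH_\mS$. Hence $\norm{\mN\Proj_\mS}_{\mW^{-1}}^2 = \Tr{\mN\mH_\mS\mN^\top}$; taking expectations and using linearity of trace yields $\ED{\norm{\mN\Proj_\mS}_{\mW^{-1}}^2} = \Tr{\mN\ED{\mH_\mS}\mN^\top} = \norm{\mN}_{\ED{\mH_\mS}}^2$.

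Finally, for the bound \eqref{eq:orthoexpka}, I would take expectations in the first identity: $\ED{\norm{\mM(\mI-\Proj_\mS)+\mN\Proj_\mS}_{\mW^{-1}}^2} = \ED{\norm{\mM(\mI-\Proj_\mS)}_{\mW^{-1}}^2} + \norm{\mN}_{\ED{\mH_\mS}}^2$, where the second term already used \eqref{eq:ih0shg09s}. For the first term, expand $\norm{\mM(\mI-\Proj_\mS)}_{\mW^{-1}}^2 = \Tr{\mM(\mI-\Proj_\mS)\mW^{-1}(\mI-\Proj_\mS)^\top\mM^\top}$ and use \eqref{eq:projext}, which gives $\ED{(\mI-\Proj_\mS)\mW^{-1}(\mI-\Proj_\mS)^\top} = \mW^{-1} - \ED{\mH_\mS}$. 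So the first term equals $\Tr{\mM(\mW^{-1}-\ED{\mH_\mS})\mM^\top} = \norm{\mM}_{\mW^{-1}}^2 - \norm{\mM}_{\ED{\mH_\mS}}^2$. Since $\mW^{1/2}\ED{\mH_\mS}\mW^{1/2}$ is PSD (it is an expectation of orthogonal projections, as noted in the proof of Lemma~\ref{lem:stochcondition}), $\norm{\mM}_{\ED{\mH_\mS}}^2 \geq 0$; more precisely, using $\mW^{1/2}\ED{\mH_\mS}\mW^{1/2} \succeq \kappa\mI$ from \eqref{eq:kappa} one gets $\norm{\mM}_{\ED{\mH_\mS}}^2 \geq \kappa\norm{\mM}_{\mW^{-1}}^2$, hence the first term is at most $(1-\kappa)\norm{\mM}_{\mW^{-1}}^2$. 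Combining gives \eqref{eq:orthoexpka}.

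The only mildly delicate point is the last step: verifying $\norm{\mM}_{\ED{\mH_\mS}}^2 \geq \kappa\norm{\mM}_{\mW^{-1}}^2$ from the eigenvalue bound. This follows by the substitution $\mN := \mM\mW^{-1/2}$ and the inequality $\Tr{\mN \mA \mN^\top} \geq \lambda_{\min}(\mA)\Tr{\mN\mN^\top}$ valid for symmetric PSD $\mA$, applied with $\mA = \mW^{1/2}\ED{\mH_\mS}\mW^{1/2}$ whose smallest eigenvalue is $\kappa$ by definition \eqref{eq:kappa} (noting $\ED{\Proj_\mS} = \ED{\mH_\mS}\mW$ so the eigenvalues of $\ED{\Proj_\mS}$ coincide with those of $\mW^{1/2}\ED{\mH_\mS}\mW^{1/2}$). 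No step presents a genuine obstacle; the proof is essentially a careful bookkeeping exercise built entirely on Lemma~\ref{lem:proj}.
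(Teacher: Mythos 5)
Your proposal is correct and follows essentially the same route as the paper's proof: the cross term is killed by \eqref{eq:proj}, expectations are handled via \eqref{eq:projext}, and the final bound comes from the eigenvalue characterization of $\kappa$. The only cosmetic difference is that you phrase the last step as $\norm{\mM}_{\ED{\mH_\mS}}^2 \geq \kappa\norm{\mM}_{\mW^{-1}}^2$, whereas the paper writes the equivalent operator inequality $\mW^{-1}-\ED{\mH_\mS} \preceq (1-\kappa)\mW^{-1}$.
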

\begin{proof}
First, note that
\begin{eqnarray*}\norm{\mM (\mI-\Proj_\mS) + \mN \Proj_\mS}_{\mW^{-1}}^2&=& \norm{\mM (\mI-\Proj_\mS)}_{\mW^{-1}}^2 +  \norm{\mN \Proj_\mS}_{\mW^{-1}}^2  + \Tr{ \mM^\top \mN\Proj_\mS \mW^{-1} (\mI-\Proj_\mS)^\top} \\
&\overset{\eqref{eq:proj}}{ =}& \norm{\mM (\mI-\Proj_\mS)}_{\mW^{-1}}^2 +  \norm{\mN \Proj_\mS}_{\mW^{-1}}^2 .
\end{eqnarray*}

By taking expectations in $\cD$, we get
\begin{eqnarray*}\ED{\norm{\mM (\mI-\Proj_\mS) + \mN \Proj_\mS}_{\mW^{-1}}^2} &=& \ED{\norm{\mM (\mI-\Proj_\mS)}_{\mW^{-1}}^2} +  \ED{\norm{\mN \Proj_\mS}_{\mW^{-1}}^2}\\	
&\overset{\eqref{eq:projext}}{ =}& \norm{\mM}^2_{\mW^{-1}-\ED{\mH_\mS}} + \norm{\mN}^2_{\ED{\mH_\mS}}\\
&\leq & (1-\kappa) \|\mM\|_{\mW^{-1}}^2 + \|\mN\|_{\ED{\mH_\mS}}^2,
\end{eqnarray*}
where in the last step we used the estimate
\begin{eqnarray*}
 \mW^{-1} - \ED{\mH_\mS} &=& \mW^{-1/2} (\mI - \mW^{1/2}\ED{\mH_\mS}\mW^{1/2}) \mW^{-1/2}\\ &\preceq&  \lambda_{\max} (\mI - \mW^{1/2}\ED{\mH_\mS} \mW^{1/2}) \mW^{-1} \quad  \overset{\eqref{eq:lambdamaxminkappa}}{=} \quad (1-\kappa) \, \mW^{-1}.\end{eqnarray*}
 \hfill \qed

\end{proof}

\subsection{Key lemmas} \label{sec:2KEYlemmas}

We first establish two lemmas. The first lemma provides an upper bound on the quality of new Jacobian estimate in terms of the quality of the current estimate and function suboptimality.  If the second term on the right hand side was not there, the lemma would be postulating a contraction on the quality of the Jacobian estimate. 

\begin{lemma} \label{lem:jac-contractXX}
Let Assumption~\ref{ass:ES2}  be satisfied. Then iterates of Algorithm~\ref{alg:SketchJac} satisfy
\begin{equation} 
 \ED{\norm{\mJ^{k+1} -\Jac(x^*)}_{\mW^{-1}}^2} \leq 
 (1-\kappa) \norm{\mJ^{k}-\Jac(x^*)}_{\mW^{-1}}^2 +  2\cL_2(f(x^k) -f(x^*)),
 \end{equation}
 where $\kappa$ is defined in \eqref{eq:kappa}.
\end{lemma}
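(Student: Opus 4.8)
The plan is to start from the update rule in Line~\ref{ln:jacupdate} of Algorithm~\ref{alg:SketchJac}, namely $\mJ^{k+1} = \mJ^k(\mI - \Proj_{\mS_k}) + \Jac(x^k)\Proj_{\mS_k}$, and subtract $\Jac(x^*)$ written in the ``split'' form $\Jac(x^*) = \Jac(x^*)(\mI-\Proj_{\mS_k}) + \Jac(x^*)\Proj_{\mS_k}$. This yields
\[
\mJ^{k+1} - \Jac(x^*) = (\mJ^k - \Jac(x^*))(\mI - \Proj_{\mS_k}) + (\Jac(x^k) - \Jac(x^*))\Proj_{\mS_k},
\]
which is precisely of the form $\mM(\mI-\Proj_{\mS_k}) + \mN\Proj_{\mS_k}$ with $\mM = \mJ^k - \Jac(x^*)$ and $\mN = \Jac(x^k) - \Jac(x^*)$.

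Next I would take the squared weighted Frobenius norm $\norm{\cdot}_{\mW^{-1}}^2$ of both sides and apply Lemma~\ref{lem:bo98gd}, specifically inequality \eqref{eq:orthoexpka}, after taking expectation $\ED{\cdot}$ over $\mS_k\sim\cD$. Since $\mJ^k$ and $x^k$ are fixed given the past, this gives
\[
\ED{\norm{\mJ^{k+1} - \Jac(x^*)}_{\mW^{-1}}^2} \leq (1-\kappa)\norm{\mJ^k - \Jac(x^*)}_{\mW^{-1}}^2 + \norm{\Jac(x^k) - \Jac(x^*)}_{\ED{\mH_{\mS}}}^2.
\]
Finally I would bound the second term using Assumption~\ref{ass:ES2} in its equivalent form \eqref{eq:ES2-equiv}, which states exactly that $\norm{\Jac(x^k) - \Jac(x^*)}_{\ED{\mH_{\mS}}}^2 \leq 2\cL_2(f(x^k) - f(x^*))$. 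Substituting this in completes the proof.

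There is no real obstacle here — the lemma is essentially an immediate consequence of the algebraic splitting identity, the projection estimate \eqref{eq:orthoexpka}, and the definition of $\cL_2$. The only point requiring a small amount of care is the rewriting of Assumption~\ref{ass:ES2} via the identity $\ED{\norm{\mM\Proj_{\mS}}_{\mW^{-1}}^2} = \norm{\mM}_{\ED{\mH_{\mS}}}^2$ (equation \eqref{eq:ih0shg09s} in Lemma~\ref{lem:bo98gd}), so that the conclusion of \eqref{eq:orthoexpka} and the hypothesis \eqref{eq:ES2} speak the same language; this is already recorded in the excerpt, so nothing new is needed.
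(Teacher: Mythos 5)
Your proposal is correct and follows essentially the same route as the paper: the same splitting $\mJ^{k+1}-\Jac(x^*) = (\mJ^k-\Jac(x^*))(\mI-\Proj_{\mS_k}) + (\Jac(x^k)-\Jac(x^*))\Proj_{\mS_k}$, followed by Lemma~\ref{lem:bo98gd} (inequality \eqref{eq:orthoexpka}) and Assumption~\ref{ass:ES2}. The only cosmetic difference is that you invoke the equivalent form \eqref{eq:ES2-equiv} while the paper applies \eqref{eq:ES2} directly; these coincide via \eqref{eq:ih0shg09s}.
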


\begin{proof}
Subtracting $\Jac(x^*)$ from both sides of~\eqref{eq:jacobsol} gives
\begin{eqnarray} \label{eq:jacDFstarstepWXX}
\mJ^{k+1} -\Jac(x^*) & \overset{\eqref{eq:jacobsol} }{=} & 
 \underbrace{(\mJ^{k}-\Jac(x^*))}_{\mM}(\mI- \Proj_{\mS_k}) + \underbrace{ (\Jac(x^k) -\Jac(x^*))}_{\mN}\Proj_{\mS_k}. 
\end{eqnarray}
Taking	norms on both sides,    then expectation with respect to $\mS_k$ and then using Lemma~\ref{lem:bo98gd}, we get
\begin{eqnarray*}
\ED{\norm{\mJ^{k+1} -\Jac(x^*)}_{\mW^{-1}}^2} & \overset{\eqref{eq:orthoexpka}}{\leq} & 
(1-\kappa) \norm{\mM}_{\mW^{-1}}^2 + \norm{\mN}_{\ED{\mH_{\mS_k}}}^2 \nonumber \\
  &  \overset{\eqref{eq:ES2}}{\leq } & (1-\kappa) \norm{\mJ^{k}-\Jac(x^*)}_{\mW^{-1}}^2 + 2\cL_2 (f(x^k) -f(x^*)).
\end{eqnarray*}
\hfill \qed
\end{proof}

We now bound the second moment of $g^k$. The lemma implies that as $x^k$ approaches $x^*$ and $\mJ^k$ approaches $\Jac(x^*)$, the variance of $g^k$ approaches zero. This is a key property of JacSketch which elevates it into the ranks of variance-reduced methods.

\begin{lemma}\label{lem:gradient_bounddeltaXX}  Let $\mS$ be an unbiased sketch. Let Assumption~\ref{ass:ES1} be satisfied (i.e., assume that inequality \eqref{eq:ES1} holds for some $\cL_1>0$). Then the second moment of the estimated gradient is bounded by
\begin{equation}\label{eq:gradbndsubdeltaXX}
\ED{\norm{g^k}_2^2 } \leq 4 \cL_1  (f(x^k) -f(x^*))
+  2 \frac{\rho}{n^2} \norm{\mJ^{k} -\Jac(x^*)}_{\mW^{-1}}^2,\end{equation}
where   $\rho$ is defined in \eqref{eq:theorhoXX}. 
\end{lemma}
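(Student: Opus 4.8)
The plan is to start from the defining formula for $g^k$ in Line~7 of Algorithm~\ref{alg:SketchJac}, namely
$g^k = \frac{1}{n}\mJ^k \ones + \frac{\theta_{\mS_k}}{n}(\Jac(x^k)-\mJ^k)\Proj_{\mS_k}\ones$, and rewrite it so that the ``true gradient part'' is isolated. Concretely, I would add and subtract $\frac{\theta_{\mS_k}}{n}(\Jac(x^k)-\Jac(x^*))\Proj_{\mS_k}\ones$ and $\frac{\theta_{\mS_k}}{n}(\mJ^k - \Jac(x^*))\Proj_{\mS_k}\ones$ appropriately, so that $g^k$ is expressed as a sum of two terms: one built from $\Jac(x^k)-\Jac(x^*)$ (which is $n\,\nabla f_{\mS_k}(x^k)$-like, recalling that $\nabla f_{\mS_k}(x^*) = \frac{\theta_{\mS_k}}{n}\Jac(x^*)\Proj_{\mS_k}\ones$ need not be zero but combines with the $\mJ^k$ term), and one built from $\mJ^k - \Jac(x^*)$. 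The cleaner route: observe from \eqref{eq:g^k=intro} and \eqref{eq:controlgradJ} that $g^k = \nabla f_{\mS_k,\mJ^k}(x^k)$, and write
$g^k = \underbrace{\nabla f_{\mS_k,\mJ^k}(x^k) - \nabla f_{\mS_k,\mJ^k}(x^*)}_{a} + \underbrace{\nabla f_{\mS_k,\mJ^k}(x^*)}_{b}$.
By \eqref{eq:8gd088gs899} the first term equals $\nabla f_{\mS_k}(x^k) - \nabla f_{\mS_k}(x^*)$ in norm, and the second term, using $\ED{\nabla f_{\mS_k,\mJ^k}(x^*)}=\nabla f(x^*)=0$ together with \eqref{eq:controlgradJ} at $x^*$, equals $\frac{\theta_{\mS_k}}{n}(\Jac(x^*)-\mJ^k)\Proj_{\mS_k}\ones + \frac{1}{n}\mJ^k\ones$, which is a zero-mean (in $\cD$) expression in $\mJ^k-\Jac(x^*)$.

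Then I would apply the elementary bound $\|a+b\|_2^2 \le 2\|a\|_2^2 + 2\|b\|_2^2$ and take $\ED{\cdot}$. For the $a$-term, $\ED{\|a\|_2^2} = \ED{\|\nabla f_{\mS_k}(x^k)-\nabla f_{\mS_k}(x^*)\|_2^2} \le 2\cL_1(f(x^k)-f(x^*))$ directly by Assumption~\ref{ass:ES1}, contributing $4\cL_1(f(x^k)-f(x^*))$ after the factor $2$. For the $b$-term, I would compute $\ED{\|b\|_2^2}$ explicitly. Writing $\mN \eqdef \mJ^k-\Jac(x^*)$, we have $b = -\frac{1}{n}\mN(\theta_{\mS_k}\Proj_{\mS_k} - \mI)\ones$ (the $\Jac(x^*)\ones$ pieces cancel once one uses that the $\mI$ contribution gives $\frac1n\mJ^k\ones$ — actually $b = \frac1n\mN\ones - \frac{\theta_{\mS_k}}{n}\mN\Proj_{\mS_k}\ones = -\frac1n \mN(\theta_{\mS_k}\Proj_{\mS_k}-\mI)\ones$), so
$\ED{\|b\|_2^2} = \frac{1}{n^2}\Tr{\mN^\top \mN\, \ED{(\mI-\theta_{\mS_k}\Proj_{\mS_k})\ones\ones^\top(\mI-\theta_{\mS_k}\Proj_{\mS_k})^\top}} = \frac{1}{n^2}\|\mN\|_{\mB}^2$,
exactly the matrix $\mB$ from \eqref{eq:Bmat}. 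Finally invoke \eqref{eq:BupperWbnd}, i.e. $\|\mN\|_{\mB}^2 \le \rho\|\mN\|_{\mW^{-1}}^2$, to get $\ED{\|b\|_2^2} \le \frac{\rho}{n^2}\|\mJ^k-\Jac(x^*)\|_{\mW^{-1}}^2$, and the factor $2$ from the $\|a+b\|^2$ split yields the claimed $2\frac{\rho}{n^2}\|\mJ^k-\Jac(x^*)\|_{\mW^{-1}}^2$.

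The main obstacle I anticipate is purely bookkeeping: making sure the decomposition $g^k = a + b$ is algebraically correct — in particular verifying that the $\Jac(x^*)$ terms cancel so that $b$ depends on $\mJ^k$ only through $\mN = \mJ^k - \Jac(x^*)$, and that $a$ reduces in norm to the stochastic-gradient difference covered by Assumption~\ref{ass:ES1} via \eqref{eq:8gd088gs899}. There is a small subtlety that $\nabla f_{\mS_k}(x^*)\ne 0$ in general (unlike in the linear-systems analogue mentioned in the footnote), so one cannot simply drop it; it must be absorbed into $b$. Once the decomposition is pinned down, the two expectation computations are routine: one is a direct application of Assumption~\ref{ass:ES1}, and the other is the same trace/Jensen-free computation already carried out in \eqref{eq:minofvariance}--\eqref{eq:Bmat} followed by \eqref{eq:BupperWbnd}. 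No new inequalities beyond $\|a+b\|^2\le 2\|a\|^2+2\|b\|^2$ are needed.
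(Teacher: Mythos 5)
Your proposal is correct and follows essentially the same route as the paper's proof: the decomposition $g^k = \bigl(\nabla f_{\mS_k,\mJ^k}(x^k)-\nabla f_{\mS_k,\mJ^k}(x^*)\bigr) + \nabla f_{\mS_k,\mJ^k}(x^*)$ is exactly the paper's add-and-subtract of $\tfrac{\theta_{\mS_k}}{n}\Jac(x^*)\Proj_{\mS_k}\ones$ (using $\Jac(x^*)\ones=0$), followed by the same $\|a+b\|_2^2\le 2\|a\|_2^2+2\|b\|_2^2$ split, Assumption~\ref{ass:ES1} via \eqref{eq:8gd088gs899} for the first term, and the $\mB$-norm computation for the second. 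The only cosmetic difference is that you bound the second term by citing \eqref{eq:Bmat} and \eqref{eq:BupperWbnd}, whereas the paper re-derives the same bound in place via $\lambda_{\max}\bigl(\ED{vv^\top}\bigr)=\rho$.
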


\begin{proof}
Adding and subtracting $\tfrac{\theta_{\mS_k}}{n} \Jac(x^*) \Proj_{\mS_k} \ones$ in~
\eqref{eq:g^k=intro}
 gives
\[g^k = \underbrace{\frac{1}{n}\mJ^{k}\ones  - \frac{\theta_{\mS_k}}{n} (\mJ^{k} -\Jac(x^*)) \Proj_{\mS_k}  \ones}_{b} + \underbrace{\frac{\theta_{\mS_k}}{n} (\Jac(x^k)-\Jac(x^*))\Proj_{\mS_k} \ones}_{a}. \] 
Taking norms  on both sides and using the bound $\|a+b\|_2^2\leq 2\|a\|_2^2 + 2\|b\|_2^2$ gives
\begin{eqnarray}
\norm{g^{k}}_2^2 & \leq  & 
\underbrace{\frac{2}{n^2} \norm{(\Jac(x^k)-\Jac(x^*))\Proj_{\mS_k} \theta_{\mS_k} \ones }_2^2}_{a^k} +
\underbrace{\frac{2}{n^2} \norm{ \theta_{\mS_k} (\mJ^{k} -\Jac(x^*)) \Proj_{\mS_k} \ones  -\mJ^{k} \ones}_2^2}_{b^k}. \label{eq:gknorm2bndXX}
\end{eqnarray}

In view of Assumption~\ref{ass:ES1} 
(combine \eqref{eq:ES1} and \eqref{eq:8gd088gs899}),  we have \begin{equation} \label{eq:buv9fg9d98}\ED{a^k} \leq 4 \cL_1  (f(x^k)-f(x^*)),\end{equation} where the expectation is taken with respect to $\mS_k$. Let us now bound $\ED{b^k}$. Using the fact that $\Jac(x^*)\ones = 0$, we can write
{\footnotesize
\begin{eqnarray}
\ED{b^k}
& = & \frac{2}{n^2} \ED{\norm{(\mJ^{k} -\Jac(x^*)) \theta_{\mS_k} \Proj_{\mS_k} \ones -(\mJ^{k}-\Jac(x^*)) \ones}_2^2}\nonumber\\ 
& = & \frac{2}{n^2} \ED{\norm{(\mJ^{k} -\Jac(x^*)) (\theta_{\mS_k} \Proj_{\mS_k} - \mI)\ones}_2^2} \nonumber \\
&=&  \frac{2}{n^2}\ED{ \ones^\top (\theta_{\mS_k} \Proj_{\mS_k} - \mI)^\top  (\mJ^{k} -\Jac(x^*))^\top (\mJ^{k} -\Jac(x^*)) (\theta_{\mS_k} \Proj_{\mS_k} - \mI)\ones  } \nonumber \\
&=& \frac{2}{n^2} \ED{\Tr{\ones^\top (\theta_{\mS_k} \Proj_{\mS_k} - \mI)^\top  (\mJ^{k} -\Jac(x^*))^\top (\mJ^{k} -\Jac(x^*)) (\theta_{\mS_k} \Proj_{\mS_k} - \mI)\ones  }} \nonumber \\
&=&  \frac{2}{n^2}\ED{\Tr{\ones^\top (\theta_{\mS_k} \Proj_{\mS_k} - \mI)^\top \mW^{1/2} \mW^{-1/2} (\mJ^{k} -\Jac(x^*))^\top (\mJ^{k} -\Jac(x^*)) \mW^{-1/2} \mW^{1/2} (\theta_{\mS_k} \Proj_{\mS_k} - \mI)\ones  }} \nonumber \\
&=& \frac{2}{n^2} \ED{\Tr{ \mW^{-1/2} (\mJ^{k} -\Jac(x^*))^\top (\mJ^{k} -\Jac(x^*)) \mW^{-1/2} \mW^{1/2} (\theta_{\mS_k} \Proj_{\mS_k} - \mI)\ones  \ones^\top (\theta_{\mS_k} \Proj_{\mS_k} - \mI)^\top \mW^{1/2}}} \nonumber \\
&=&\frac{2}{n^2} \Tr{ \mW^{-1/2} (\mJ^{k} -\Jac(x^*))^\top (\mJ^{k} -\Jac(x^*)) \mW^{-1/2} \ED{\mW^{1/2} (\theta_{\mS_k} \Proj_{\mS_k} - \mI)\ones  \ones^\top (\theta_{\mS_k} \Proj_{\mS_k} - \mI)^\top \mW^{1/2}}} \nonumber.
\end{eqnarray}
}
If we now let $v=\mW^{1/2} (\theta_{\mS_k} \Proj_{\mS_k} - \mI)\ones$ and $\mM=(\mJ^{k} -\Jac(x^*)) \mW^{-1/2}$, then we can continue:
\begin{eqnarray}
\ED{b^k}
&=& \frac{2}{n^2}\Tr{\mM^\top \mM \ED{v v^\top}} \quad \leq \quad  \frac{2}{n^2}\Tr{\mM^\top \mM}  \lambda_{\max}\left( \ED{vv^\top} \right) \nonumber \\
 & \overset{\eqref{eq:fro_norm}}{=} & \frac{2}{n^2} \norm{\mJ^{k} -\Jac(x^*)}_{\mW^{-1}}^2  \lambda_{\max}\left( \ED{vv^\top} \right)
\quad =\quad   \frac{2 \rho}{n^2} \norm{\mJ^{k} -\Jac(x^*)}_{\mW^{-1}}^2, 
 \label{eq:andc9oa9nqYY} 
\end{eqnarray}
where in the last step we have used the assumption that $\theta_{\mS_k}$ is bias-correcting: 
\begin{equation}
\lambda_{\max}\left( \ED{vv^\top} \right)   \overset{\eqref{eq:unbiased}}{=}  \lambda_{\max} \left( \mW^{1/2}  \ED{\theta_{\mS_k}^2\Proj_{\mS_k} \ones \ones^\top \Proj_{\mS_k}^\top} \mW^{1/2} - \mW^{1/2} \ones \ones^\top \mW^{1/2} \right) \overset{\eqref{eq:theorhoXX}}{=} \rho.
\end{equation}
 It now only remains to substitute \eqref{eq:buv9fg9d98} 
and \eqref{eq:andc9oa9nqYY} into \eqref{eq:gknorm2bndXX} to arrive at~\eqref{eq:gradbndsubdeltaXX}. 
\hfill \qed
\end{proof}

\subsection{Proof of Theorem~\ref{theo:convgen}}

With the help of the above lemmas, we now proceed to the proof of the theorem.  In view of the strong convexity assumption \eqref{eq:strconv}, we have
\begin{eqnarray}
\dotprod{\nabla f(y), y- x^*}  &\geq & f(y) -f(x^*) +\frac{\mu}{2} \norm{y - x^*}_2^2. \label{eq:strconv3}\end{eqnarray}

By using the relationship $x^{k+1} = x^k -\alpha g^k$, the fact that $g^k$ is an unbiased estimate of the gradient $\nabla f(x^k)$, and using one-point strong convexity \eqref{eq:strconv3}, we get
\begin{eqnarray}
\ED{\norm{x^{k+1} -x^*}_2^2 } &\overset{ \eqref{eq:xupdate}}{=} & \ED{\norm{x^k  -x^* - \alpha g^{k}}_2^2} \nonumber \\
& \overset{\eqref{eq:unbiasedgrad}}{=} &  \norm{x^k  -x^*}_2^2 -2\alpha \dotprod{\nabla f(x^k), x^k  -x^*}  + \alpha^2\ED{\norm{g^{k}}_2^2}\nonumber \\ 
&\overset{\eqref{eq:strconv3}}{\leq }  &    (1-\alpha\mu)\norm{x^k  -x^*}_2^2 +\alpha^2\ED{\norm{g^{k}}_2^2}  -2\alpha (f(x^k)-f(x^*)).
 \label{eq:convstepsub1XX}
\end{eqnarray}
Next, applying Lemma~\ref{lem:gradient_bounddeltaXX} leads to the estimate
\begin{eqnarray}
\ED{\norm{x^{k+1} -x^*}_2^2 } & \overset{\eqref{eq:gradbndsubdeltaXX}}{ \leq}  &   (1-\alpha\mu)\norm{x^k  -x^*}_2^2  +2\alpha\left( 2\alpha \cL_1  -1\right) (f(x^k)-f(x^*))
 \nonumber \\
&&  +  2\alpha^2 \frac{\rho}{n^2}\norm{\mJ^{k} -\Jac(x^*)}_{\mW^{-1}}^2.
 \label{eq:convstepsub1XX}
\end{eqnarray}
Let $\sigma = 1 / (2 \cL_2)$.
Adding $\sigma\alpha \ED{\norm{\mJ^{k+1} -\Jac(x^*) }_{\mW^{-1}}^2}$ to both sides of the above inequality and substituting in the definition of $\Psi^k$ from~\eqref{eq:lyapgen}, it follows that  
\begin{eqnarray}
\ED{\Psi^{k+1}} & \overset{\eqref{eq:convstepsub1XX}}{\leq} & (1-\alpha\mu)\norm{x^k  -x^*}_2^2 +2\alpha\left( 2\alpha \cL_1  -1\right) (f(x^k)-f(x^*))
 \nonumber \\
&& +  2\alpha^2 \frac{\rho}{n^2} \norm{\mJ^{k} -\Jac(x^*)}_{\mW^{-1}}^2 +  \sigma\alpha \ED{\norm{ \mJ^{k+1} -\Jac(x^*) }_{\mW^{-1}}^2} \nonumber\\
& \overset{\text{(Lemma~\ref{lem:jac-contractXX})}}{\leq} & (1-\alpha\mu)\norm{x^k  -x^*}_2^2 +2\alpha\underbrace{\left(  \cL_2 \sigma +  2\alpha \cL_1 -1\right)}_{\text{I}} (f(x^k)-f(x^*)) \nonumber \\
&& +\sigma\alpha\underbrace{\left( 1-\kappa +2  \frac{\alpha \rho}{\sigma n^2}\right)}_{\text{II}}\norm{\mJ^{k} -\Jac(x^*)}_{\mW^{-1}}^2 .\label{eq:nqqecpr92XX}
\end{eqnarray}

We now choose $\alpha$ so that  $\text{I} \leq 0$ and  $\text{II} \leq 1-\alpha \mu$, which can be written as
\begin{eqnarray}
\alpha \quad \leq \quad   \frac{1-\cL_2 \sigma}{2\cL_1} \quad \text{and} \quad 
\alpha \quad  \leq \quad\frac{\kappa}{2\rho/(\sigma n^2) + \mu}. \label{eq:a98sjd9a8js}
\end{eqnarray}
If $\alpha$ satisfies the above two inequalities, then \eqref{eq:nqqecpr92XX} takes on the simplified form
$
\ED{\Psi^{k+1}} \leq  (1-\alpha \mu) \Psi^k.
$
By taking expectation again and using the tower rule, we get $\E{\Psi^{k}} \leq  (1-\alpha \mu)^k  \Psi^0$. Note that as long as $k\geq \frac{1}{\alpha \mu} \log \frac{1}{\epsilon}$, we have $\E{\Psi^k} \leq \epsilon \Psi^0$. Recalling that $\sigma =1/(2 \cL_2)$, and choosing $\alpha$ to be the minimum of the two upper bounds \eqref{eq:a98sjd9a8js} gives the upper bound on~\eqref{eq:alphaboundXX}, which in turn leads to \eqref{eq:itercomplexgen}.
\hfill \qed

\section{Minibatch Sketches} \label{sec:minibatch_sketches}

In this section we focus on special cases of Algorithm~\ref{alg:SketchJac} where one  computes $\nabla f_i(x^k)$ for $i\in S^k$, where  $S^k$ is a random subset (mini-batch) of $[n]$ chosen in each iteration according to some fixed probability law. As we have seen in the introduction, this is achieved by choosing $\mS_k = \mI_{S_k}$.

We say that $\mS$ is a {\em minibatch sketch} if $\mS = \mI_{S}$ for some random set (sampling) $S$, where $\mI_S \in \R^{n\times |S|}$ is a column submatrix of the $n\times n$ identity matrix $\mI$ associated with columns indexed by the set $S$. That is, the distribution $\cD$ from which the sketches $\mS$ are sampled is defined by \[\Prb{\mS = \mI_C} = p_{C} , \qquad C\subseteq [n],\]
where $\sum_{C \subseteq [n]} p_C = 1$ and $p_C\geq 0$ for all $C$.

\subsection{Samplings} \label{sec:samplings}

We now formalize the notion of a random set, which we will refer to by the name sampling. A {\em sampling} is a random set-valued mapping with values being the subsets of $[n]$. A sampling $S$ is uniquely characterized by the probabilities $p_C\eqdef \Prb{S = C}$ associated with every subset $C$  of $ [n]$.

\begin{definition}[Types of samplings]  We say that sampling $S$ is non-vacuous if $\Prb{S=\emptyset} = 0$ (i.e., $p_{\emptyset} = 0$).  Let $p_i\eqdef \Prb{i \in S} = \sum_{C : i\in C} p_C$. We say that $S$ is proper if $p_i>0$ for all $i$. We say that $S$ is uniform if $p_i=p_j$ for all $i,j$. We say that $S$ is $\tau$--uniform if it is uniform and  $|S|=\tau$ with probability 1. In particular, the unique sampling which assigns equal probabilities to all subsets of $[n]$ of cardinality $\tau$ and zero probabilities to all other subsets is called the $\tau$--nice sampling. 
\end{definition}

We refer the reader to \cite{PCDM, ESO} for a background reading on samplings and their properties.

\begin{definition}[Support] \label{def:unif_support} The support of a sampling $S$ is the set
of subsets of $[n]$ which are chosen by $S$ with positive probability: $\support(S) \eqdef \{C \;:\; p_C > 0\}$. We say that $S$ has uniform support if
\[c_1 \eqdef | \{ C \in \support(S) \;:\; i\in C  \} | = | \{ C \in \support(S) \;:\; j\in C  \} | \] 
for all $i,j\in [n]$. In such a case we say that the support is $c_1$--uniform.
\end{definition}

To illustrate the above concepts, we now list a few examples with  $n=4$. 
\begin{example} The sampling defined by setting $p_{\{1,2\}} = p_{\{3,4\}} = 0.5$ is non-vacuous, proper, $2$--uniform ($p_i=0.5$ for all $i$ and $|S|=2$ with probability 1),  and has $1$--uniform support. If we change the probabilities to $p_{\{1,2\}}=0.4$ and $p_{\{3,4\}}=0.6$, the sampling is no longer uniform (since $p_1=0.4\neq 0.6=p_3$), but it still has $1$--uniform support, is proper and non-vacuous. Hence, a sampling with uniform support need not be uniform. On the other hand, a uniform sampling need not have uniform support. As an example, consider sampling $S$ defined via $p_{\{1\}} = 0.4$, $p_{\{2,3\}} = p_{\{3,4\}} = p_{\{2,4\}} = 0.2$. It is uniform (since $p_i=0.4$ for all $i$). However, while element $1$ appears in a single  set of its support, elements $2,3$ and $4$ each appear in two sets. So, this sampling does not have uniform support.
\end{example}

\begin{example} A uniform sampling need not be $\tau$--uniform for any $\tau$. For example, the sampling defined by setting $p_{\{1,2,3,4\}} = 0.5$, $p_{\{1,2\}} = 0.25$ and $p_{\{3,4\}} = 0.25$ is uniform (since $p_i=0.75$ for all $i$), but as it assigns positive probabilities to sets of at least two different cardinalities, it is not $\tau$--uniform for any~$\tau$.  
\end{example}

\begin{example} Further, the sampling defined by setting $p_{\{1,2\}} = 1/6$, $p_{\{1,3\}} = 1/6$, $p_{\{1,4\}}=1/6$, $p_{\{2,3\}}=1/6$, $p_{\{2,4\}}=1/6$, $p_{\{3,4\}}=1/6$ is non-vacuous, $2$--uniform ($p_i=1/2$ for all $i$ and $|S|=2$ with probability 1), and has $3$--uniform support.  The sampling defined by setting $p_{\{1,2\}} = 1/3$, $p_{\{2,3\}} = 1/3$, $p_{\{3,1\}} = 1/3$ is non-vacuous, proper, $2$--uniform ($p_i=2/3$ for all $i$ and $|S|=2$ with probability 1) and has $2$--uniform support. 
\end{example}

Note that a sampling with uniform support is  necessarily proper as long as $c_1>0$. However, it need not be non-vacuous. For instance, the sampling $S$ defined by setting $p_{\emptyset}=1$ has $0$--uniform support and is vacuous.  From now on, we only consider samplings with the following properties.
 
\begin{assumption} \label{ass:proper} $S$ is non-vacuous and has $c_1$--uniform support with $c_1\geq 1$.
\end{assumption}

Note that if $S$ is a non-vacuous sampling with $1$--uniform support, then its support is necessary a partition of $[n]$. We shall pay specific attention to such samplings in Section~\ref{sec:theopart} as for them we can develop a stronger analysis than that provided by Theorem~\ref{theo:convgen}.

 \subsection{Minibatch sketches and projections}

In the next result we describe some basic properties of the projection matrix $\Proj_{\mS} = \mS(\mS^{\top} \mW \mS)^\dagger \mS^\top \mW$  associated with a minibatch sketch $\mS$. 
 
\begin{lemma} \label{prop:bus98g90s09} Let $\mW = \diag(w_1,\dots,w_n)$. Let $S$ be any sampling, $\mS=\mI_S$ be the associated minibatch sketch,  and let $\mP$ be the probability matrix\footnote{The notion of a probability matrix associated with a  sampling was first introduced in \cite{PCDM} in the context of parallel coordinate descent methods, and further studied in \cite{ESO}.} associated with sampling $S$: $\mP_{ij} = \Prb{i \in S \; \&\;  j\in S}$. Then
\begin{enumerate}
\item[(i)]  $ \Proj_{\mS} = \mI_S \mI_S^\top $. This is a diagonal matrix with the $i$th diagonal element equal to 1 if $i\in S$, and $0$ if $i\notin S$.
\item[(ii)] 
$\Proj_{\mS} \ones = e_S  \eqdef \sum_{i\in S} e_i .$
\item[(iii)] $\ED{\Proj_{\mS} \ones \ones^\top \Proj_{\mS}} = \sum_{C \subseteq [n]} p_C e_C e_C^\top = \mP$
\item[(iv)] $\ED{\Proj_{\mS}} = \diag(\mP)$
\item[(v)] The stochastic condition number defined in \eqref{eq:kappa} is given by $\kappa = \min_{i} p_i$
\item[(vi)] Let $S$ satisfy Assumption~\ref{ass:proper}.  Then the random variable \begin{equation}
\label{eq:bu80d09hbjdd}\theta_{\mS} \eqdef \frac{1}{c_1 p_S },\end{equation} defined on $\support(S)$, is bias-correcting.That is,
$\ED{\Proj_{\mS} \theta_{\mS} \ones} = \ones.$
\end{enumerate}
\end{lemma}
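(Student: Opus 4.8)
The plan is to prove (i) directly from the definition of $\Proj_{\mS}$ and then obtain (ii)--(vi) as easy consequences. For (i), substitute $\mS=\mI_S$ into $\Proj_{\mS}=\mS(\mS^\top\mW\mS)^{\dagger}\mS^\top\mW$. Since $\mW=\diag(w_1,\dots,w_n)$ with each $w_i>0$, the matrix $\mI_S^\top\mW\mI_S=\diag(w_i:i\in S)$ is an invertible diagonal matrix, so its pseudoinverse is the genuine inverse $\diag(1/w_i:i\in S)$; moreover $\diag(1/w_i:i\in S)\,\mI_S^\top\mW=\mI_S^\top$. Hence $\Proj_{\mS}=\mI_S\mI_S^\top=\sum_{i\in S}e_ie_i^\top$, which is precisely the diagonal $0/1$ matrix described. (This is the one place where diagonality of $\mW$ is used; for a general $\mW$ the projection would not be an indicator matrix.) Part (ii) then follows at once: $\Proj_{\mS}\ones=\mI_S\mI_S^\top\ones=\sum_{i\in S}e_i=e_S$.

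For (iii), note $\Proj_{\mS}$ is symmetric by (i), so $\Proj_{\mS}\ones\ones^\top\Proj_{\mS}=e_Se_S^\top$ using (ii); taking expectations gives $\sum_{C\subseteq[n]}p_Ce_Ce_C^\top$, whose $(i,j)$ entry is $\sum_{C:\,i,j\in C}p_C=\Prb{i\in S \ \&\ j\in S}=\mP_{ij}$. Part (iv) is obtained by taking the expectation of the diagonal indicator matrix in (i): the $i$th diagonal entry becomes $\Prb{i\in S}=p_i=\mP_{ii}$ and the off-diagonal entries vanish. Part (v) then reads $\kappa=\lambda_{\min}(\ED{\Proj_{\mS}})=\lambda_{\min}(\diag(p_1,\dots,p_n))=\min_ip_i$.

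Part (vi) is the only step that uses the structural hypothesis (Assumption~\ref{ass:proper}), so I would handle it with the most care. With $\theta_{\mS}\eqdef 1/(c_1p_S)$ --- well defined on $\support(S)$ because $p_C>0$ there and $c_1\ge 1$, and understood as the function assigning the value $1/(c_1p_C)$ to the realized set $C$ --- compute using (ii)
\[\ED{\theta_{\mS}\Proj_{\mS}\ones}=\sum_{C\in\support(S)}p_C\cdot\frac{1}{c_1p_C}\,e_C=\frac{1}{c_1}\sum_{C\in\support(S)}e_C.\]
The $c_1$-uniform support assumption states exactly that $|\{C\in\support(S):i\in C\}|=c_1$ for every $i\in[n]$, i.e.\ every coordinate of $\sum_{C\in\support(S)}e_C$ equals $c_1$; hence this sum equals $c_1\ones$ and the right-hand side is $\ones$. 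Thus $\ED{\theta_{\mS}\Proj_{\mS}\ones}=\ones$, so $\theta_{\mS}$ is bias-correcting in the sense of \eqref{eq:unbiased}.

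I do not expect a genuine obstacle: the argument is bookkeeping. The only points requiring attention are verifying that the pseudoinverse in (i) collapses to an ordinary inverse (which needs $w_i>0$, guaranteed by $\mW\succ0$ being diagonal), keeping track in (vi) that $\theta_{\mS}$ depends on the realized set so that the expectation is a $p_C$-weighted sum over $\support(S)$, and reading $\diag(\mP)$ in (iv) as the diagonal matrix built from the diagonal of $\mP$, which equals $\diag(p_1,\dots,p_n)$.
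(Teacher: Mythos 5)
Your proposal is correct and follows essentially the same route as the paper's proof: (i) via the observation that $\mI_S^\top\mW\mI_S$ is an invertible diagonal matrix so that $(\mI_S^\top\mW\mI_S)^{\dagger}\mI_S^\top\mW=\mI_S^\top$, (ii)--(v) as direct consequences, and (vi) by the $p_C$-weighted sum over $\support(S)$ together with $c_1$-uniformity of the support. No gaps.
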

\begin{proof}
\begin{itemize}
\item[(i)] This follows by noting that $\mI_S^\top \mW \mI_S$ is the $|S|\times |S|$ diagonal matrix with diagonal entries corresponding to $w_i$ for $i\in S$, which in turn can be used to show that $(\mI_S^\top \mW \mI_S)^{-1} \mI_S^\top \mW = \mI_S^\top $.
\item[(ii)] This follows from (i) by noting that $\mI_S^\top \ones$ is the vector of all ones in $\R^{|S|}$.
\item[(iii)] Using (ii), we have $\Proj_{\mS} \ones \ones^\top \Proj_{\mS} = e_S e_S^\top$. By linearity of expectation, $\left(\ED{e_S e_S^\top}\right)_{ij} =\ED{(e_S e_S^\top)_{ij}}  = \ED{ 1_{i,j \in S}} = \Prb{i\in S \;\&\; j\in S} = \mP_{ij}$, where $1_{i,j\in S} = 1$ if $i,j\in S$ and $1_{i,j\in S} = 0$ otherwise.
\item[(iv)]  This follows  from (i) by taking expectations of the diagonal elements of $\Proj_{\mS}$.
\item[(v)] Follows from (iv).
\item[(vi)] Indeed, 
\begin{equation} \label{eq:unbiasedpart}
\ED{\theta_{\mS} \Proj_\mS \ones} \overset{\text{(ii)}}{=} \sum_{C \in \support(S)} p_C\theta_C e_C    
 \overset{\eqref{eq:bu80d09hbjdd}}{=} \frac{1}{c_1}\sum_{C \in \support(S)}  e_C  = \ones,
\end{equation}
where the last equation follows from the assumption that the support of $S$ is $c_1$--uniform. \hfill \qed
\end{itemize}
\end{proof}

The following simple observation will be useful in the computation  of the constant $\cL_1$. The proof is straightforward and involves a double counting argument.

\begin{lemma}\label{lem:hi8s8bvf786sfvs} Let $S$ be a sampling satisfying Assumption~\ref{ass:proper}. Moreover, assume that $S$ is $\tau$--uniform. Then 
$\frac{|\support(S)|}{c_1} = \frac{n}{\tau}$.
Consequently, 
$ \kappa = p_1 = p_2 = \dots = p_n = \frac{\tau}{n} = \frac{c_1}{|\support(S)|}$,
where $\kappa$ is the stochastic condition number associated with  the minibatch sketch $\mS=\mI_S$.
\end{lemma}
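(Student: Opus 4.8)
The plan is to prove the identity $|\support(S)|/c_1 = n/\tau$ by a double-counting (incidence-counting) argument, and then to read off the marginal probabilities $p_i$ and the value of $\kappa$ as immediate consequences. Concretely, I would introduce the incidence set $\mathcal{I} \eqdef \{(i,C) \;:\; C\in\support(S),\ i\in C\}$ and evaluate $|\mathcal{I}|$ in two ways. Counting by the first coordinate: Assumption~\ref{ass:proper} states that the support is $c_1$--uniform, which by Definition~\ref{def:unif_support} means every $i\in[n]$ lies in exactly $c_1$ members of $\support(S)$, so $|\mathcal{I}| = n c_1$. Counting by the second coordinate: since $S$ is $\tau$--uniform we have $|S|=\tau$ with probability one, hence every $C$ with $p_C>0$ (i.e. every $C\in\support(S)$) satisfies $|C|=\tau$, giving $|\mathcal{I}| = \tau\,|\support(S)|$. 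Equating, $n c_1 = \tau\,|\support(S)|$, i.e. $|\support(S)|/c_1 = n/\tau$, which is the first claim.

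For the ``consequently'' part I would first note that $\tau$--uniformity subsumes uniformity (by the definition of a $\tau$--uniform sampling), so $p_1=\dots=p_n$. Summing the marginals and interchanging the order of summation, $\sum_{i=1}^n p_i = \sum_{i=1}^n \sum_{C\ni i} p_C = \sum_{C\in\support(S)} |C|\,p_C = \tau\sum_{C} p_C = \tau$, using $|C|=\tau$ on the support together with $\sum_C p_C=1$. Since all the $p_i$ are equal, this forces $p_i=\tau/n$ for every $i$. Finally, Lemma~\ref{prop:bus98g90s09}(v) identifies $\kappa=\min_i p_i$, hence $\kappa=\tau/n$, and combining with the first part we also get $\tau/n=c_1/|\support(S)|$, which closes the chain of equalities.

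There is no genuine obstacle here; the argument is elementary counting. The only two points that deserve a word of care, and which I would state explicitly to keep the proof self-contained, are: (a) that $|S|=\tau$ almost surely forces every set in $\support(S)$ to have cardinality exactly $\tau$, so that the second count and the marginal sum are valid; and (b) that $\tau$--uniform already entails that $S$ is uniform, which is what licenses the step from $\sum_i p_i=\tau$ to $p_i=\tau/n$.
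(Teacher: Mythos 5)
Your proposal is correct and follows exactly the route the paper intends: the paper only remarks that the proof "is straightforward and involves a double counting argument," and your incidence-count $n c_1 = \tau\,|\support(S)|$, the marginal identity $\sum_i p_i = \tau$ combined with uniformity to get $p_i = \tau/n$, and the appeal to Lemma~\ref{prop:bus98g90s09}(v) for $\kappa = \min_i p_i$ are precisely that argument spelled out. No gaps; the two points of care you flag (support sets have cardinality exactly $\tau$, and $\tau$--uniformity implies uniformity) are indeed the only hypotheses being used.
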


\subsection{JacSketch for minibatch sampling = minibatch SAGA}

As we have mentioned in Section~\ref{sec:SAGA-intro} already, JacSketch admits a particularly simple form for minibatch sketches, and corresponds to known and new variants of SAGA. Assume that  $S$ satisfies Assumption~\ref{ass:proper} and let  $\mW=\diag(w_1,\dots,w_n)$. In view of Lemma~\ref{prop:bus98g90s09}(vi), this means that the random variable $\theta_{\mS} = \frac{1}{c_1 p_S }$ is bias-correcting, and due to  Lemma~\ref{prop:bus98g90s09}(ii), we have $\Proj_{\mS_k} \ones = e_{S_k}= \sum_{i\in S_k} e_i$. Therefore, 
\begin{equation} \label{eq:g^k-SAGA_XXX} g^{k} \overset{\eqref{eq:g^k=intro}}{=} \frac{1}{n} \mJ^k \ones + \frac{\theta_{\mS_k}}{n} \sum_{i\in S_k} (\nabla f_i(x^k)- \mJ^k_{:i}) = \frac{1}{n}\left( \sum_{i\notin S_k} \mJ^k_{:i}  + \sum_{i\in S_k} \left(1-\tfrac{1}{c_1 p_{S_k}} \right) \mJ^k_{:i} + \tfrac{1}{c_1 p_{S_k}} \nabla f_i(x^k)  \right). \end{equation}
By  Lemma~\ref{prop:bus98g90s09}(i), $\Proj_{\mS_k} = \mI_{S_k} \mI_{S_k}^\top$. In view of \eqref{eq:jacobsolWintro}, the Jacobian estimate gets updated as follows
\begin{equation} \label{eq:SAGA-Jacobian_update} \mJ^{k+1}_{:i} = \begin{cases} \mJ^k_{:i} & \quad i\notin S_k,\\
 \nabla f_i(x^k)&  \quad i\in S_k. \end{cases}
\end{equation}

The resulting minibatch SAGA method is formalized as Algorithm~\ref{alg:SketchJac_subsample}.

\begin{algorithm}
	\begin{algorithmic}[1]
		\State \textbf{Parameters:} Sampling $S$ satisfying Assumption~\ref{ass:proper},  $\mW=\diag(w_1,\dots,w_n)$, stepsize $\alpha>0$
		\State \textbf{Initialization:} Choose  $x^0\in \R^d$, $\mJ^0 \in \R^{d \times n}$ 
		\Comment Initialization
		\For {$k =  0, 1, 2, \dots$}
		
		\State Sample a fresh set $S_k\sim S$
			
		\State $g^k = \frac{1}{n} \mJ^k \ones + \frac{1}{ n c_1 p_{S_k}} \sum_{i\in S_k} (\nabla f_i(x^k)- \mJ^k_{:i})$       \label{ln:gradupdate} \Comment Update gradient estimate  	

		\State $ \mJ^{k+1}_{:i} = \begin{cases} \mJ^k_{:i} & \quad i\notin S_k\\
		\nabla f_i(x^k) & \quad i\in S_k.\end{cases}$
		 \label{ln:jacupdate}				\Comment Update Jacobian estimate	
		
		\State $x^{k+1} = x^k - \alpha g^{k}$ \label{ln:xupdate}		
		\Comment Take a step		 
		
		\EndFor
	\end{algorithmic}
	\caption{JacSketch: Mini-batch  SAGA}
	\label{alg:SketchJac_subsample}
\end{algorithm}

Below we specialize the formula for $g^k$ to a few interesting special cases.

\begin{example}[Standard SAGA]
Standard uniform SAGA is obtained by setting $S_k = \{i\}$ with probability $1/n$ for each $i\in [n]$. Since the support of this sampling is $1$--uniform, we set  $c_1=1$. This leads to the gradient estimate
\begin{equation}\label{eq:SAGAstandard} g^k = \frac{1}{n} \mJ^k \ones +  \nabla f_i(x^k)- \mJ^k_{:i}.\end{equation}
\end{example}

\begin{example}[Non-uniform SAGA]
However, we can use non-uniform probabilities instead. Let $S_k = \{i\}$ with probability $p_i>0$ for each $i\in [n]$. Since the support of this sampling is 1--uniform, we have $c_1=1$. So, the gradient estimate has the form 
\begin{equation} \label{eq:nonuniSAGAup}g^k = \frac{1}{n} \mJ^k \ones + \frac{1}{np_i} (\nabla f_i(x^k)- \mJ^k_{:i}).\end{equation}
\end{example}

\begin{example}[Uniform minibatch SAGA, version 1]
Let $C_1,\dots,C_q$ be nonempty subsets of forming a  partition $[n]$.  Let $S_k = C_j$ with probability $p_{C_j}>0$. The support of this sampling is $1$--uniform,  and hence we can choose $c_1=1$. This leads to the gradient estimate 
\[g^k = \frac{1}{n} \mJ^k \ones + \frac{1}{np_{C_j}} \sum_{i\in C_j}(\nabla f_i(x^k)- \mJ^k_{:i}).\]
\end{example}

\begin{example}[Uniform minibatch SAGA, version 2] \label{ex:8hsh}Let $S_k$ be chosen uniformly at random from all subsets of $[n]$ of cardinality $\tau \geq 2$. That is, $\mS_k$ is the $\tau$-nice sampling, and the probabilities are equal to $p_{S_k} = 1/{n \choose \tau}$. This sampling has $c_1$--uniform support with $c_1 = {n-1 \choose \tau-1} = \frac{\tau}{n} {n \choose \tau}$. Thus, $n c_1 p_{S_k}= \tau$, and we have
\begin{equation}\label{eq:SAGAmini-nice}
g^k = \frac{1}{n} \mJ^k \ones + \frac{1}{\tau} \sum_{i\in S_k}(\nabla f_i(x^k)- \mJ^k_{:i}).\end{equation}
\end{example}

\begin{example}[Gradient descent] Consider the same situation as in Example~\ref{ex:8hsh}, but with $\tau=n$. That is, we choose $S_k = [n]$ with probability $1$, and $c_1=1$. Then
\[g^k = \frac{1}{n} \mJ^k \ones + \frac{1}{n} \sum_{i=1}^n(\nabla f_i(x^k)- \mJ^k_{:i}) = \nabla f(x^k).\]
\end{example}


\subsection{Expected smoothness constants $\cL_1$ and $\cL_2$ } \label{sec:L1_L_2-xxx}

Here we compute the expected smoothness constants $\cL_1$ and $\cL_2$ in the case of $\mS$ being a minibatch sketch $\mS=\mI_S$, and  assuming that $f$ is convex and smooth. We first formalize the notion of smoothness we will use.

\begin{assumption}\label{ass:L_C-smoothness}  For $\emptyset \neq C\subseteq [n]$ define \begin{equation}\label{eq:f_C-def}f_C(x) \eqdef \frac{1}{|C|}\sum_{i\in C} f_i(x).\end{equation} For each $\emptyset\neq C\subseteq[n]$ and all $x\in \R^d$, the function $f_C$ is $L_C$--smooth and convex. That is,  there exists $L_C\geq 0$ such that the following inequality holds 
\begin{equation} \label{eq:bdf7gd899u}\|\nabla f_C(x) - \nabla f_C(x^*)\|_2^2 \leq 2L_C \left(f_C(x) - f_C(x^*) - \langle \nabla f_C(x^*), x-x^*\rangle \right), \qquad \forall x\in \R^d.\end{equation}
Let $L_i = L_{\{i\}}$ for $i\in [n]$.
\end{assumption}

The above assumption is somewhat non-standard. Note that, however, if we instead assume that each $f_i$ is convex and $L_i$-smooth, then the above assumption holds for $L_C = \frac{1}{|C|}\sum_{i\in C} L_i$. In some cases, however, we may have better estimates of the constants $L_C$ than those provided by the averages of the $L_i$ values. The value of these constants will have a direct influence on $\cL_1$ and $\cL_2$, which is why we work with this more refined assumption instead.

\begin{lemma}[Smoothness of the Jacobian]\label{lem:jacsmoothness} Assume that $f_i$ is convex and $L_i$--smooth for all $i\in [n]$. Define 
$L_{\max} \eqdef \max_{i} L_i$ and $\mD_L \eqdef \diag(L_1, \ldots, L_n) \in \R^{n\times n}.$ Then
\begin{equation}\label{eq:DFDFstL-1}
	\norm{\Jac(x) - \Jac(x^*)}_{\mD_L^{-1}}^2  \leq 2n(f(x) - f(x^*)), \qquad \forall x\in \R^d.
\end{equation}
\end{lemma}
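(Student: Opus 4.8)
The plan is to expand the weighted Frobenius norm column by column, apply the per-function smoothness inequality to each column, and sum. Recall from \eqref{eq:fro_norm} that for a diagonal weight matrix $\mW = \mD_L = \diag(L_1,\dots,L_n)$ we have $\mW^{-1} = \diag(1/L_1,\dots,1/L_n)$, and hence for any $\mX = [x_1,\dots,x_n] \in \R^{d\times n}$,
\[
\norm{\mX}_{\mD_L^{-1}}^2 = \Tr{\mX \mD_L^{-1} \mX^\top} = \sum_{i=1}^n \frac{1}{L_i}\norm{x_i}_2^2.
\]
Applying this with $\mX = \Jac(x) - \Jac(x^*)$, whose $i$th column is $\nabla f_i(x) - \nabla f_i(x^*)$ by \eqref{eq:jac_def}, gives
\[
\norm{\Jac(x) - \Jac(x^*)}_{\mD_L^{-1}}^2 = \sum_{i=1}^n \frac{1}{L_i}\norm{\nabla f_i(x) - \nabla f_i(x^*)}_2^2.
\]

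Next I would invoke the standard consequence of convexity and $L_i$-smoothness of $f_i$ (this is exactly inequality \eqref{eq:bdf7gd899u} specialized to $C = \{i\}$, since $f_{\{i\}} = f_i$ and $L_{\{i\}} = L_i$):
\[
\norm{\nabla f_i(x) - \nabla f_i(x^*)}_2^2 \leq 2 L_i\left(f_i(x) - f_i(x^*) - \langle \nabla f_i(x^*), x - x^*\rangle\right), \qquad \forall x\in\R^d.
\]
Dividing by $L_i$ and summing over $i\in[n]$, and using $f = \frac1n\sum_i f_i$ and $\nabla f = \frac1n\sum_i \nabla f_i$, yields
\[
\norm{\Jac(x) - \Jac(x^*)}_{\mD_L^{-1}}^2 \leq 2\sum_{i=1}^n\left(f_i(x) - f_i(x^*) - \langle\nabla f_i(x^*), x-x^*\rangle\right) = 2n\left(f(x) - f(x^*) - \langle \nabla f(x^*), x-x^*\rangle\right).
\]

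Finally, since $x^*$ is a minimizer of the differentiable function $f$ (see \eqref{eq:prob}), we have $\nabla f(x^*) = 0$, so the inner-product term vanishes and we obtain $\norm{\Jac(x) - \Jac(x^*)}_{\mD_L^{-1}}^2 \leq 2n(f(x)-f(x^*))$, as claimed. There is essentially no hard step here: the only points requiring care are correctly identifying the column-wise form of the weighted Frobenius norm for a diagonal weight, and noting that the per-function inequality in the hypothesis is precisely the one stated in Assumption~\ref{ass:L_C-smoothness} for singletons, so that the stationarity $\nabla f(x^*)=0$ can be used to drop the linear term.
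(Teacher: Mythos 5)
Your proposal is correct and follows essentially the same route as the paper's proof: expand the $\mD_L^{-1}$-weighted Frobenius norm column by column, apply the standard convexity-plus-$L_i$-smoothness inequality to each $\nabla f_i(x)-\nabla f_i(x^*)$, and sum, using $\sum_i \nabla f_i(x^*)=n\nabla f(x^*)=0$ to drop the linear term. No gaps.
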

\begin{proof}
Indeed,
\begin{eqnarray*}
\norm{\Jac(x) - \Jac(x^*)}_{\mD_L^{-1}}^2  & \overset{\eqref{eq:fro_norm}}{=}& \norm{(\Jac(x) - \Jac(x^*)) \mD^{-1/2}_L}^2 \quad  \overset{\eqref{eq:fro_norm}}{=} \quad  \sum_{i=1}^n  \frac{1}{L_{i}}\norm{ \nabla f_i(x) - \nabla f_i(x^*)}_2^2 \nonumber \\
& \leq &
2\sum_{i=1}^n    (f_i(x) - f_i(x^*) -\dotprod{\nabla f_i(x^*), x-x^*}) \quad  \overset{\eqref{eq:prob}}{=} \quad  2n(f(x) - f(x^*)),
\end{eqnarray*}
where in the last step we used the fact that $\sum_{i=1}^n \nabla f_i(x^*) =  n \nabla f(x^*) =  0.$ \hfill \qed
\end{proof}

\begin{theorem}[Expected smoothness] \label{lem:Lfhatuni}
Let $\mS=\mI_S$ be a minibatch sketch where $S$ is a sampling satisfying Assumption~\ref{ass:proper} (in particular, the support of $S$ is $c_1$--uniform). Consider the bias-correcting random variable $\theta_\mS$ given in \eqref{eq:bu80d09hbjdd}. Further, let $f$ satisfy Assumption~\ref{ass:L_C-smoothness}. 
Then the expected smoothness assumptions (Assumptions~\ref{ass:ES1} and \ref{ass:ES2}) are satisfied with constants $\cL_1$ and $\cL_2$ given by\footnote{Recall that   $p_i = \Prb{i\in S}$ for $i\in [n]$, $p_C = \Prb{S=C}$ for $C\subseteq [n]$ and $\mW= \diag(w_1,\ldots, w_n)\succ 0$.}
\begin{eqnarray}
\cL_1 =  \frac{  1}{n c_1^2} \max_i \left\{\sum_{C \in \support(S) \;:\; i\in C} \frac{|C| L_C}{p_C}\right\}, \qquad
\cL_2  =  n \
\max_i \left\{\frac{p_i L_i}{w_i}\right\},
\label{eq:LhatFnonuni}
\end{eqnarray}
where $L_i = L_{\{i\}}$. If moreover, $S$ is $\tau$-uniform, then\footnote{Note that $c_1 =|\{C\in \support(S) \;:\; 1\in C\}|$, and hence $\cL_1$ has the form of a maximum over averages.}
\begin{eqnarray}
\cL_1 = \LGconst \eqdef  \max_i  \left\{\frac{1}{c_1} \sum_{C \in \support(S) \;:\; i\in C} L_C \right\}, \qquad   
\cL_2  =  \tau \
\max_i \left\{\frac{L_i}{w_i}\right\}.
\label{eq:LhatFuni}
\end{eqnarray}
\end{theorem}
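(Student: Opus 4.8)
The plan is to reduce both expected smoothness inequalities to the defining smoothness-and-convexity inequalities of the functions $f_C$, applied term by term, after using the explicit descriptions of $\Proj_{\mS}$ and $\mH_{\mS}$ for minibatch sketches from Lemma~\ref{prop:bus98g90s09}. For $\cL_1$, I would first put the stochastic gradient into a convenient shape: since $\Proj_{\mS}\ones = e_S$ (Lemma~\ref{prop:bus98g90s09}(ii)), we have $\Jac(x)\Proj_{\mS}\ones = \sum_{i\in S}\nabla f_i(x) = |S|\,\nabla f_S(x)$, so with $\theta_{\mS}=1/(c_1 p_S)$ the formula \eqref{eq:stochgradplain} becomes $\nabla f_{\mS}(x) = \tfrac{|S|}{n c_1 p_S}\,\nabla f_S(x)$, whence $\nabla f_{\mS}(x)-\nabla f_{\mS}(x^*) = \tfrac{|S|}{n c_1 p_S}\big(\nabla f_S(x)-\nabla f_S(x^*)\big)$. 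Taking the expectation over $S\sim\cD$ collapses one factor of $p_C^{-1}$:
\[
\ED{\norm{\nabla f_{\mS}(x)-\nabla f_{\mS}(x^*)}_2^2} = \frac{1}{n^2 c_1^2}\sum_{C\in\support(S)}\frac{|C|^2}{p_C}\norm{\nabla f_C(x)-\nabla f_C(x^*)}_2^2.
\]

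Next I would apply Assumption~\ref{ass:L_C-smoothness}. Writing $D_i(x)\eqdef f_i(x)-f_i(x^*)-\dotprod{\nabla f_i(x^*),x-x^*}\ge 0$ (nonnegative by convexity of $f_i$) and noting $f_C(x)-f_C(x^*)-\dotprod{\nabla f_C(x^*),x-x^*} = \tfrac1{|C|}\sum_{i\in C}D_i(x)$, the bound \eqref{eq:bdf7gd899u} gives $\norm{\nabla f_C(x)-\nabla f_C(x^*)}_2^2 \le \tfrac{2L_C}{|C|}\sum_{i\in C}D_i(x)$. Substituting this and exchanging the order of summation ($\sum_{C}\sum_{i\in C}=\sum_i\sum_{C\ni i}$) yields
\[
\ED{\norm{\nabla f_{\mS}(x)-\nabla f_{\mS}(x^*)}_2^2} \le \frac{2}{n^2 c_1^2}\sum_{i=1}^n D_i(x)\!\!\sum_{C\in\support(S),\,i\in C}\!\!\frac{|C|L_C}{p_C} \le \frac{2}{n^2 c_1^2}\Big(\max_i\sum_{C\in\support(S),\,i\in C}\frac{|C|L_C}{p_C}\Big)\sum_{i=1}^n D_i(x),
\]
and since $\sum_{i=1}^n D_i(x) = n(f(x)-f(x^*))$ (because $\sum_i\nabla f_i(x^*)=n\nabla f(x^*)=0$), this is exactly $2\cL_1(f(x)-f(x^*))$ with $\cL_1$ as claimed.

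For $\cL_2$ I would work with the equivalent formulation \eqref{eq:ES2-equiv}, i.e.\ bound $\norm{\Jac(x)-\Jac(x^*)}_{\ED{\mH_{\mS}}}^2$. For $\mS=\mI_S$ and $\mW=\diag(w_1,\dots,w_n)$ one checks directly that $\mH_{\mS}=\Proj_{\mS}\mW^{-1}$ is the diagonal matrix whose $i$th entry is $1/w_i$ for $i\in S$ and $0$ otherwise, so $\ED{\mH_{\mS}}=\diag(p_1/w_1,\dots,p_n/w_n)$ and therefore $\norm{\Jac(x)-\Jac(x^*)}_{\ED{\mH_{\mS}}}^2 = \sum_{i=1}^n \tfrac{p_i}{w_i}\norm{\nabla f_i(x)-\nabla f_i(x^*)}_2^2$. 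Applying \eqref{eq:bdf7gd899u} with $C=\{i\}$ and bounding $p_iL_i/w_i\le\max_j (p_jL_j/w_j)$ gives $\norm{\Jac(x)-\Jac(x^*)}_{\ED{\mH_{\mS}}}^2 \le 2\max_i(p_iL_i/w_i)\sum_i D_i(x) = 2n\max_i(p_iL_i/w_i)(f(x)-f(x^*))$, i.e.\ $\cL_2 = n\max_i (p_iL_i/w_i)$.

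For the $\tau$-uniform case, $|C|=\tau$ for every $C\in\support(S)$ and $p_i=\tau/n$ for every $i$: substituting $p_i=\tau/n$ into the formula for $\cL_2$ gives $\cL_2=\tau\max_i(L_i/w_i)$, while substituting $|C|=\tau$ into $\cL_1$ and simplifying via Lemma~\ref{lem:hi8s8bvf786sfvs} (which relates $|\support(S)|$, $c_1$, $n$, $\tau$) reduces the expression to $\max_i\tfrac1{c_1}\sum_{C\in\support(S),\,i\in C}L_C=\LGconst$. I do not expect a genuine obstacle: the heart of the argument is one application of the $L_C$-smoothness inequality to each summand, plus linear-algebra identities already recorded in Lemmas~\ref{prop:bus98g90s09} and \ref{lem:bo98gd}. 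The only place requiring care is the bookkeeping after taking expectations — keeping track of the powers of $c_1$, the $|C|$ factors and the $p_C$'s, and performing the double-counting interchange $\sum_C\sum_{i\in C}=\sum_i\sum_{C\ni i}$ correctly — together with verifying the claimed simplification of the $\tau$-uniform formulae.
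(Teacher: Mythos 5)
Your proposal is correct and follows essentially the same route as the paper's proof: the same expansion of $\ED{\|\nabla f_{\mS}(x)-\nabla f_{\mS}(x^*)\|_2^2}$ into the weighted sum $\tfrac{1}{n^2c_1^2}\sum_{C}\tfrac{|C|^2}{p_C}\|\nabla f_C(x)-\nabla f_C(x^*)\|_2^2$, the same application of \eqref{eq:bdf7gd899u} followed by the interchange $\sum_C\sum_{i\in C}=\sum_i\sum_{C\ni i}$ and the identity $\sum_i D_i(x)=n(f(x)-f(x^*))$, and the same reduction of the $\cL_2$ bound to $\ED{\mH_\mS}=\diag(p_i/w_i)$. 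The only cosmetic difference is that you expand $\Tr{\mR^\top\mR\,\ED{\mH_\mS}}$ coordinatewise and bound each term by $2L_iD_i(x)$ directly, whereas the paper routes the same estimate through Lemma~\ref{lem:jacsmoothness} and the eigenvalue bound $\lambda_{\max}\bigl(\mD_L^{1/2}\ED{\mH_\mS}\mD_L^{1/2}\bigr)$, which is the identical computation.
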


\begin{proof}
Let $\mR = \Jac(x) -\Jac(x^*)$ and $A=\ED{\norm{\nabla f_{\mS}(x) - \nabla f_{\mS}(x^*)}_2^2}$. Then
\begin{eqnarray*}
A &\overset{\eqref{eq:8gd088gs899}}{=} & 
\ED{\frac{\theta_{\mS}^2}{n^2}\norm{\mR \Proj_{\mS}  \ones }_2^2} 
\quad \overset{\eqref{eq:bu80d09hbjdd}}{=} \quad \sum_{C \in \support(S)} \frac{p_C}{c_1^2 p_C^2 n^2 } \norm{\mR \Proj_{\mI_C}  \ones }_2^2 \\
&=&    \sum_{C \in \support(S)} \frac{1}{c_1^2 p_C n^2 }  \Tr{\ones^\top \Proj_{\mI_C}^\top  \mR^\top \mR \Proj_{\mI_C}  \ones  } \quad = \quad \sum_{C \in \support(S)} \frac{1}{c_1^2 p_C n^2 }     \Tr{\mR^\top \mR \Proj_{\mI_C}  \ones \ones^\top \Proj_{\mI_C}^\top  } \\
& \overset{\text{Lem~\ref{prop:bus98g90s09}(iii)}}{=} & \sum_{C \in \support(S)} \frac{1}{c_1^2 p_C n^2 }   \Tr{\mR^\top \mR  e_C e_C^\top} \quad = \quad  \sum_{C \in \support(S)} \frac{1}{c_1^2 p_C n^2 }    \norm{(\Jac(x) -\Jac(x^*))e_C}_2^2 \\
& = &  \sum_{C \in \support(S)} \frac{|C|^2}{c_1^2 p_C n^2 }    \norm{\nabla f_{C}(x)-\nabla f_{C}(x^*)}_2^2 .
\end{eqnarray*}

Using \eqref{eq:bdf7gd899u} and  \eqref{eq:f_C-def}, we can continue:
\begin{eqnarray}
A & \overset{\eqref{eq:bdf7gd899u}}{\leq} &   \sum_{C \in \support(S)} \frac{2L_C |C|^2}{c_1^2 p_C n^2 }       (f_{C}(x) - f_{C}(x^*) -\dotprod{\nabla f_{C}(x^*), x-x^*})\nonumber \\
& \overset{\eqref{eq:f_C-def}}{=} &   \frac{2}{c_1^2 n^2}\sum_{C \in \support(S)} \frac{L_C |C|^2}{ p_C  }       \frac{1}{|C|}\sum_{i\in C} (f_{i}(x) - f_{i}(x^*) -\dotprod{\nabla f_{i}(x^*), x-x^*})\nonumber \\
& = &   \frac{2}{c_1^2 n^2}\sum_{C \in \support(S)}        \sum_{i\in C} (f_{i}(x) - f_{i}(x^*) -\dotprod{\nabla f_{i}(x^*), x-x^*})\frac{L_C |C|}{ p_C  }\nonumber \\
& = &   \frac{2}{c_1^2 n^2}   \sum_{i=1}^n \sum_{C \in \support(S)\;:\; i\in C}      (f_{i}(x) - f_{i}(x^*) -\dotprod{\nabla f_{i}(x^*), x-x^*})\frac{L_C |C|}{ p_C  }\nonumber \\
& = &   \frac{2}{c_1^2 n^2}   \sum_{i=1}^n   (f_{i}(x) - f_{i}(x^*) -\dotprod{\nabla f_{i}(x^*), x-x^*}) \sum_{C \in \support(S)\;:\; i\in C}    \frac{L_C |C|}{ p_C  }\nonumber \\
& \leq &   \frac{2}{c_1^2 n} \max_i \left\{ \sum_{C \in \support(S)\;:\; i\in C}    \frac{L_C |C|}{p_C}  \right\}  \frac{1}{n}\sum_{i=1}^n   (f_{i}(x) - f_{i}(x^*) -\dotprod{\nabla f_{i}(x^*), x-x^*}), \label{eq:oiaaasdaim}
\end{eqnarray}
where in this last inequality  we have used convexity of $f_i$ for $i\in [n]$. Since
	\[
\frac{1}{n}\sum_{i=1}^n   (f_{i}(x) - f_{i}(x^*) -\dotprod{\nabla f_{i}(x^*), x-x^*}) =	
	f(x) - f(x^*) -\dotprod{\nabla f(x^*), x-x^*} = f(x) - f(x^*),\]
the formula for $\cL_1$ now follows  by comparing~\eqref{eq:oiaaasdaim} to~\eqref{eq:ES1}. In order to establish the formula for $\cL_2$, we estimate
	\begin{eqnarray}
	\ED{\norm{\mR \Proj_\mS}_{\mW^{-1}}^2} & \overset{\eqref{eq:fro_norm}}{=} &
	\ED{\norm{\mR \Proj_\mS \mW^{-1/2}}_{\mI}^2} \quad \overset{\eqref{eq:fro_norm}}{=}  \quad \Tr{ \mR^\top \mR \ED{\Proj_\mS \mW^{-1} \Proj_\mS^\top}  } \notag \\
	& \overset{\eqref{eq:projext}}{=} &\Tr{\mR^\top \mR \ED{\mH_\mS}  } \quad = \quad  \Tr{\mD_L^{-1/2} \mR^\top \mR \mD_L^{-1/2} \mD_L^{1/2} \ED{\mH_\mS} \mD_L^{1/2}} \nonumber \\
	& \leq & \norm{\mR}_{\mD_L^{-1}}^2 \lambda_{\max}\left(\mD_L^{1/2} \ED{\mH_\mS} \mD_L^{1/2}\right) \nonumber  \\ 
	&\overset{\eqref{eq:DFDFstL-1}}{\leq} &   2n \lambda_{\max}\left(\mD_L^{1/2} \ED{\mH_\mS} \mD_L^{1/2}\right) (f(x^k) -f(x^*)).
	\label{eq:RLWEZWbnd}
	\end{eqnarray}

From Lemma~\ref{prop:bus98g90s09}(iv) we have  $\ED{\mH_\mS} = \ED{\Proj_\mS}\mW^{-1} = \mP\mW^{-1} = \diag\left(\frac{p_1}{w_1},\dots,\frac{p_n}{w_n}\right)$, and hence $\mD_L^{1/2} \ED{\mH_\mS} \mD_L^{1/2} = \diag\left( \frac{p_1 L_1 }{w_1},\cdots, \frac{p_n L_n }{w_n}\right)$.
 Comparing to the definition of $\cL_2$ in~\eqref{eq:ES2} to \eqref{eq:RLWEZWbnd}, we conclude that
\[
\cL_2 = 
  n\lambda_{\max}\left(\mD_L^{1/2}\mW^{-1} \mD_L^{1/2}\right) = n \max_{i}  \left \{ \frac{p_i L_i}{w_i} \right \}.
\]

The specialized formulas \eqref{eq:LhatFuni} for $\tau$--uniform sampling follow as special cases of the general formulas \eqref{eq:LhatFnonuni} by applying Lemma~\ref{lem:hi8s8bvf786sfvs}.
\hfill \qed
\end{proof}

In the next result we establish some inequalities relating the quantities $L$, $L_{\max}$, $L_C$ and $\LGconst.$ In particular, the results says that for a certain family of samplings $S$ (the same for which we have defined the quantity $\LGconst$ in \eqref{eq:LhatFuni}), the expected smoothed constant $\LGconst$ is lower-bounded by the average of $L_C$ over $C\in \cG=\support(S)$, and upper-bounded by $L_{\max}$.

\begin{theorem} \label{lem:two_ineqXXX} Let $S$ be a $\tau$--uniform sampling ($\tau\geq 1$) with $c_1$--uniform support ($c_1\geq 1$). Let $\cG=\support(S)$. Then \begin{equation}\label{eq:h98gf09hf} f(x) = \frac{1}{|\cG|}\sum_{C\in \cG} f_C(x).\end{equation} Moreover,
\begin{equation} \label{eq:b7878fg89fg656} L\quad \leq \quad\frac{1}{|\cG|} \sum_{C\in \cG} L_C \quad \leq \quad \LGconst \quad \leq \quad  L_{\max}.\end{equation}
The last inequality holds without the need to assume $\tau$--uniformity.
\end{theorem}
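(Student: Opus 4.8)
The plan is to prove the identity \eqref{eq:h98gf09hf} first and then peel off the chain \eqref{eq:b7878fg89fg656} one inequality at a time, each step reducing to a double-counting argument over index--set pairs $(i,C)$ with $i\in C$, together with the counting identity $|\cG|/c_1 = n/\tau$ supplied by Lemma~\ref{lem:hi8s8bvf786sfvs}. For \eqref{eq:h98gf09hf}: since $S$ is $\tau$--uniform, $|C|=\tau$ for every $C\in\cG$, so $f_C(x)=\frac{1}{\tau}\sum_{i\in C}f_i(x)$; summing over $C\in\cG$ and swapping the order of summation, each index $i$ appears in exactly $c_1$ members of $\cG$ (by $c_1$--uniformity of the support), hence $\frac{1}{|\cG|}\sum_{C\in\cG}f_C(x) = \frac{c_1}{\tau|\cG|}\sum_{i=1}^n f_i(x)$, and Lemma~\ref{lem:hi8s8bvf786sfvs} turns the prefactor $\frac{c_1}{\tau|\cG|}$ into $\frac1n$, giving $f(x)$.

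For the first inequality of \eqref{eq:b7878fg89fg656}, I would use \eqref{eq:h98gf09hf}: $f$ is a uniformly weighted average of the convex, $L_C$--smooth functions $f_C$, $C\in\cG$, hence $f$ is convex and $\bigl(\frac{1}{|\cG|}\sum_{C\in\cG}L_C\bigr)$--smooth; since $L=L_{[n]}$ is the smoothness constant of $f=f_{[n]}$, this forces $L\le \frac{1}{|\cG|}\sum_{C\in\cG}L_C$. For the middle inequality, bound the maximum in the definition $\LGconst=\max_i \frac{1}{c_1}\sum_{C\in\cG:\,i\in C}L_C$ from below by the average over $i\in[n]$, then double count: $\sum_{i=1}^n\sum_{C\in\cG:\,i\in C}L_C = \sum_{C\in\cG}|C|\,L_C = \tau\sum_{C\in\cG}L_C$, so $\LGconst \ge \frac{\tau}{nc_1}\sum_{C\in\cG}L_C$, and Lemma~\ref{lem:hi8s8bvf786sfvs} again identifies $\frac{\tau}{nc_1}$ with $\frac{1}{|\cG|}$. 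For the last inequality, I would invoke the standing bound $L_C\le \frac{1}{|C|}\sum_{i\in C}L_i\le L_{\max}$, so that for every $i$ the sum $\frac{1}{c_1}\sum_{C\in\cG:\,i\in C}L_C$ has exactly $c_1$ terms each at most $L_{\max}$, giving $\LGconst\le L_{\max}$; this step uses only $c_1$--uniformity of the support, which is exactly why that inequality survives without the $\tau$--uniformity hypothesis.

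All the computations are elementary; the only thing that needs care is the bookkeeping of the counting constants $c_1$, $\tau$, $|\cG|$, $n$ and the consistent use of Lemma~\ref{lem:hi8s8bvf786sfvs}, so I do not expect a genuine obstacle. If anything, the mildly delicate point is the first inequality, where one must read $L=L_{[n]}$ as (the smallest valid value of) the smoothness constant of $f$, so that exhibiting $\frac{1}{|\cG|}\sum_{C\in\cG}L_C$ as \emph{a} valid smoothness constant of $f$ indeed pins $L$ below it.
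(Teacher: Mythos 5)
Your proposal is correct and follows essentially the same route as the paper: the same double-counting over pairs $(i,C)$ with $i\in C$, the same use of the identity $|\cG|/c_1=n/\tau$ from Lemma~\ref{lem:hi8s8bvf786sfvs}, the averaging bound $\max_i \LGi \ge \frac1n\sum_i \LGi$ for the middle inequality, and the bound $L_C\le \frac{1}{|C|}\sum_{i\in C}L_i\le L_{\max}$ (needing only $c_1$--uniform support) for the last one. Your remark about reading $L=L_{[n]}$ as the smallest admissible smoothness constant of $f$ is exactly the implicit convention the paper uses when it says the first inequality follows by the ``standard arguments'' behind $L\le\bar L$.
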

\begin{proof}
 Using the fact that $S$ has $c_1$--uniform support, and utilizing a double-counting argument, we observe that 
$\sum_{C\in \cG} |C| f_C(x) = c_1 \sum_{i=1}^n f_i(x)$. Multiplying both sides by $\frac{1}{n c_1}$, and since $|C|=\tau$ for all $C\in \cG$, we get $\frac{\tau |\cG|}{c_1 n} \frac{1}{|\cG|}\sum_{C\in \cG} f_C(x) = \frac{1}{n}\sum_{i=1}^n f_i(x) = f(x).$ To obtain \eqref{eq:h98gf09hf}, it now only remains to use the identity \begin{equation}\label{eq:ub98gd9djbjh}\frac{\tau |\cG|}{ c_1 n} = 1\end{equation}
which was shown in Lemma~\ref{lem:hi8s8bvf786sfvs}. The first inequality in \eqref{eq:b7878fg89fg656} follows from \eqref{eq:h98gf09hf} using standard arguments (identical to those that lead to the inequality $L\leq \bar{L}$). 

Let us now establish the second inequality in \eqref{eq:b7878fg89fg656}. Define $\LGi \eqdef \frac{1}{c_1}\sum_{C\in \cG \;:\; i\in C} L_C$.  Again using a double-counting argument we observe that
$\tau \sum_{C\in \cG} L_C = c_1 \sum_{i=1}^n \LGi.$ Multiplying both sides of this equality by $\frac{|\cG|}{c_1 n}$ and using identity \eqref{eq:ub98gd9djbjh}, we get
$\frac{1}{|\cG|} \sum_{C\in \cG} L_C = \frac{1}{n}\sum_{i=1}^n \LGi \leq \max_i \LGi = \LGconst.$
 We will now establish the last inequality by  proving that $\LGi \leq L_{\max}$ for any $i$:
\begin{eqnarray*}\LGi &=& \frac{1}{c_1} \sum_{C\in \cG\;:\; i\in C} L_C \quad \leq \quad  \frac{1}{c_1} \sum_{C\in \cG\;:\; i\in C} \frac{1}{|C|} \sum_{i\in C} L_i 
\quad \leq \quad    \frac{1}{c_1} \sum_{C\in \cG\;:\; i\in C} \frac{1}{|C|} \sum_{i\in C} L_{\max} \\
&= & L_{\max}  \frac{1}{c_1} \sum_{C\in \cG\;:\; i\in C} \underbrace{\frac{1}{|C|} \sum_{i\in C} 1}_{=1} 
\quad \leq \quad   L_{\max}  \underbrace{\frac{1}{c_1} \sum_{C\in \cG\;:\; i\in C} 1}_{=1} \quad \leq \quad L_{\max}. \end{eqnarray*}
Note that we did not need to assume $\tau$--uniformity to prove that $\LGconst\leq L_{\max}$.
\hfill \qed
\end{proof}

\subsection{Estimating the sketch residual $\rho$}
\label{sec:computeconsts}

In this section we compute the sketch residual $\rho$ for several classes of samplings $S$.  Let $\cG=\support(S)$. We will assume throughout this section that $S$ is non-vacuous, has $c_1$--uniform support (with $c_1\geq 1$), and is $\tau$--uniform. 

Further, we assume that $\mW=\diag(w_1,\dots,w_n)$, and that the bias-correcting random variable $\theta_\mS$ is chosen as  $\theta_\mS=\tfrac{1}{c_1 p_S} = \tfrac{|\cG|}{c_1}$ (see \eqref{eq:unbiasedpart} and Lemma~\ref{lem:hi8s8bvf786sfvs}). In view of the above, since $\Proj_{\mI_C}\ones = e_C$, the sketch residual is given by
	\begin{eqnarray}
	\rho & \overset{\eqref{eq:theorhoXX}}{=}& 
	 \lambda_{\max}\left(\mW^{1/2}\left( \frac{|\cG|^2}{c_1^2} \ED{ \Proj_{\mS} \ones \ones^\top \Proj_{\mS}} -\ones \ones^\top\right)\mW^{1/2}\right) \nonumber \\
&	= &  \lambda_{\max}\left(\mW^{1/2}\left( \frac{|\cG|}{c_1^2} \sum_{C \in \cG}  e_C e_C^\top -\ones \ones^\top\right) \mW^{1/2}\right)  \quad = \quad \lambda_{\max}\left(\left( \frac{|\cG|}{c_1^2} \sum_{C \in \cG}  e_C e_C^\top -\ones \ones^\top\right) \mW\right)  \label{eq:theorhoXX2},
	\end{eqnarray}
where the last equality follows by permuting the multiplication of matrices within the $\lambda_{\max}.$ 


In the  following text we calculate upper bounds for $\rho$ for $\tau$--partition and $\tau$--nice samplings. Note that Theorem~\ref{theo:convgen} still holds if we use an upper bound of $\rho$ in place of $\rho$. 

\begin{theorem} \label{thm:08hs9sd} If $S$ is the $\tau$--partition sampling, then 
	\begin{equation} \label{eq:rhounipart}
	\rho \leq  \frac{n}{\tau} \max_{C \in \cG} \sum_{i \in C} w_i.
	\end{equation}
\end{theorem}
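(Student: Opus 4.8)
The plan is to start from the closed form \eqref{eq:theorhoXX2} for the sketch residual and to exploit the fact that the blocks of a partition have disjoint supports. First I would record what the $\tau$--partition sampling gives: its support $\cG=\support(S)$ is a partition of $[n]$ into blocks of cardinality $\tau$, so every index lies in exactly one block; hence $c_1=1$ and $|\cG|=n/\tau$. Plugging these into \eqref{eq:theorhoXX2} (and keeping the expression in conjugated, symmetric form) gives
\[
\rho \;=\; \lambda_{\max}\!\left( \mW^{1/2}\Big( \tfrac{n}{\tau}\textstyle\sum_{C\in\cG} e_C e_C^\top \;-\; \ones\ones^\top \Big)\mW^{1/2}\right).
\]

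The next step is to drop the rank--one correction term: since $\mW^{1/2}\ones\ones^\top\mW^{1/2}\succeq 0$, we have the Loewner inequality $\mW^{1/2}\big(\tfrac{n}{\tau}\sum_{C} e_C e_C^\top-\ones\ones^\top\big)\mW^{1/2}\preceq \tfrac{n}{\tau}\,\mW^{1/2}\big(\sum_{C} e_C e_C^\top\big)\mW^{1/2}$, and monotonicity of $\lambda_{\max}$ with respect to $\preceq$ yields $\rho\le \tfrac{n}{\tau}\,\lambda_{\max}\big(\mW^{1/2}(\sum_{C}e_Ce_C^\top)\mW^{1/2}\big)$.

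It then remains to evaluate this eigenvalue. Collecting the vectors $\{e_C\}_{C\in\cG}$ as the columns of a matrix $\mE\in\R^{n\times|\cG|}$, we have $\sum_{C\in\cG}e_Ce_C^\top=\mE\mE^\top$, and $\mW^{1/2}\mE\mE^\top\mW^{1/2}$ shares its nonzero eigenvalues with $\mE^\top\mW\mE\in\R^{|\cG|\times|\cG|}$. This is the one place where the partition structure is essential: because distinct blocks $C\neq C'$ have disjoint supports, $e_C^\top\mW e_{C'}=\sum_{i\in C\cap C'}w_i=0$, while $e_C^\top\mW e_C=\sum_{i\in C}w_i$, so $\mE^\top\mW\mE=\diag\big(\sum_{i\in C}w_i:C\in\cG\big)$ is diagonal and its largest eigenvalue is $\max_{C\in\cG}\sum_{i\in C}w_i$. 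Combining the two steps gives $\rho\le\tfrac{n}{\tau}\max_{C\in\cG}\sum_{i\in C}w_i$, which is \eqref{eq:rhounipart}.

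I do not expect a genuine obstacle here; the only delicate point is that the identification of $\mE^\top\mW\mE$ as a diagonal matrix rests on disjointness of the blocks, so the argument truly needs a partition and not merely a $\tau$--uniform sampling (for a general $\tau$--uniform sampling the off-diagonal entries $e_C^\top\mW e_{C'}$ need not vanish, and one would instead have to bound $\lambda_{\max}(\mE^\top\mW\mE)$ by cruder means such as a Gershgorin estimate). One should also note that $\mW\succ0$ makes $\max_{C}\sum_{i\in C}w_i$ strictly positive, so nothing is lost when passing between the nonzero spectra of $\mW^{1/2}\mE\mE^\top\mW^{1/2}$ and $\mE^\top\mW\mE$.
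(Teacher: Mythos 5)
Your proof is correct, and it follows the paper's route for the first two steps: specializing \eqref{eq:theorhoXX2} with $c_1=1$ and $|\cG|/c_1^2=n/\tau$, and discarding the term $-\mW^{1/2}\ones\ones^\top\mW^{1/2}$ because it is negative semidefinite. Where you diverge is the final eigenvalue estimate. The paper bounds $\lambda_{\max}\bigl(\sum_{C\in\cG} e_C e_C^\top \mW\bigr)=\lambda_{\max}\bigl(\sum_{C\in\cG} e_C w_C^\top\bigr)$ by Gershgorin's circle theorem, using that row $i$ of this matrix is supported only on the block containing $i$, so every Gershgorin disk gives $\lambda\le\sum_{i\in C} w_i$ for some $C\in\cG$. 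You instead write $\sum_{C\in\cG}e_Ce_C^\top=\mE\mE^\top$ and pass to $\mE^\top\mW\mE$, which by disjointness of the partition blocks is exactly $\diag\bigl(\sum_{i\in C}w_i : C\in\cG\bigr)$, so the eigenvalue in question is computed \emph{exactly}, not merely bounded. The two arguments cost about the same; yours has the small advantage of showing that the only slack in \eqref{eq:rhounipart} comes from dropping the rank-one correction (consistent with the paper's remark that the bound is tight for $\mW=\mI$), and your closing observation is also accurate: the diagonality of $\mE^\top\mW\mE$ genuinely requires the partition structure, which is exactly why the paper's treatment of general $\tau$-uniform samplings (Theorem~\ref{thm:rhobnbcollect}) must proceed by different, Gershgorin-type means.
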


\begin{proof}
	Using Lemma~\ref{lem:hi8s8bvf786sfvs}, and since  $c_1 =1$, we get  $\frac{|\cG|}{c_1^2} = \frac{n}{\tau}$. Consequently,
	\begin{eqnarray}\rho
	& \overset{\eqref{eq:theorhoXX2}}{\leq} &
	\frac{n}{\tau}\lambda_{\max}\left(\sum_{C \in \cG} e_C e_C^\top \mW\right) \quad = 
	\quad  \frac{n}{\tau}\lambda_{\max}\left(\sum_{C \in \cG} e_C w_C^\top \right), \label{eq:rhounipartbnd}\end{eqnarray}
		where $w_C=\sum_{i \in C} w_i e_i $ and we used that $-\mW^{1/2} \ones \ones^\top \mW^{1/2}$ is negative semidefinite. When $\mW = \mI$, the above bound is tight.
By Gershgorin's theorem, every eigenvalue $\lambda$ of the matrix is bounded by at least one of the inequalities $\lambda \leq \sum_{i \in C} w_i$ for $ C \in \cG$. 	Consequently, from~\eqref{eq:rhounipartbnd} we have that $\rho \leq  \frac{n}{\tau} \max_{C \in \cG} \sum_{i \in C} w_i. $ \hfill \qed
	
\end{proof}

Next we give an useful upper bound on $\rho$ for a large family of uniform  samplings (for proof, see Appendix~\ref{app:rho}).

\begin{theorem} \label{thm:rhobnbcollect}
Let  $\cG$ be a collection of subsets of $[n]$ with the property that 
the number of sets $C\in \cG$ containing distinct elements $i,j\in [n]$ is the same for all $i, j$. In particular, define 
	\begin{equation}\label{eq:c_2_xxx}
	c_2\eqdef |\{C \, : \, \{1,2\} \subseteq C, \, C \in\cG\}|.
	\end{equation}
Now define a sampling $S$ by setting $S=C\in \cG$ with probability $\frac{1}{|\cG|}$. Moreover, assume that the support of $S$ is $c_1$--uniform. Consider the minibatch sketch $\mS=\mI_{S}$.
\begin{itemize}
\item[(i)] If $\mW =\diag(w_{1}, \ldots, w_{n})$, then
	\begin{equation} \label{eq:rhocollect}
	\rho \leq \max_{i=1,\ldots, n} \left\{ \left( \frac{|\cal G|}{c_1}  -1\right) w_i+\sum_{j\neq i} w_j \left|\frac{| \cG | c_2}{c_1^2}-1\right|\right\}.
	\end{equation}
\item[(ii)]	If $\mW = \mI$, then
	\begin{equation} \label{eq:rhocollectI}
	\rho  =  \max\left\{\frac{| \cG | }{c_1}\left(1+ (n-1) \frac{c_2}{c_1}\right) -n  ,\frac{| \cG | }{c_1}\left(1  - \frac{c_2}{c_1}\right)\right\}.
	\end{equation}
\end{itemize}	

\end{theorem}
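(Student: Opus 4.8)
The plan is to start from the closed form \eqref{eq:theorhoXX2} for the sketch residual. Since here $p_S = 1/|\cG|$ and $\theta_\mS = |\cG|/c_1$, that formula reads
\[\rho = \lambda_{\max}\left(\left(\frac{|\cG|}{c_1^2}\sum_{C\in\cG}e_C e_C^\top - \ones\ones^\top\right)\mW\right),\]
so everything reduces to understanding the matrix $\mA \eqdef \sum_{C\in\cG}e_C e_C^\top$. First I would compute $\mA$ entrywise: its $(i,i)$ entry counts the sets $C\in\cG$ with $i\in C$, which is $c_1$ by the $c_1$-uniformity of the support; its $(i,j)$ entry for $i\neq j$ counts the sets $C\in\cG$ with $\{i,j\}\subseteq C$, which equals $c_2$ by hypothesis (independently of the pair). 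Hence $\mA = (c_1-c_2)\mI + c_2\ones\ones^\top$, and substituting gives
\[\rho = \lambda_{\max}\big((a\mI + b\ones\ones^\top)\mW\big), \qquad a \eqdef \frac{|\cG|(c_1-c_2)}{c_1^2}, \quad b \eqdef \frac{|\cG|c_2}{c_1^2}-1.\]

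For part (ii), setting $\mW=\mI$, the matrix $a\mI + b\ones\ones^\top$ is symmetric; $\ones$ is an eigenvector with eigenvalue $a+bn$, and the orthogonal complement of $\ones$ is an eigenspace with eigenvalue $a$ of multiplicity $n-1$. Therefore $\rho = \max\{a+bn,\, a\}$ exactly, and a short algebraic simplification of $a+bn$ and of $a$ recovers the two expressions in \eqref{eq:rhocollectI}; this step is an equality, not a bound.

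For part (i), with a general diagonal $\mW = \diag(w_1,\dots,w_n)$, write $M \eqdef a\mI + b\ones\ones^\top$, so $M\mW = a\mW + b\,\ones\ones^\top\mW$, whose $(i,j)$ entry is $(a+b)w_i$ for $i=j$ and $b\,w_j$ for $i\neq j$. Although $M\mW$ need not be symmetric, it is similar to $\mW^{1/2}M\mW^{1/2}$, so its spectrum is real and $\rho = \lambda_{\max}(M\mW)$. Note $a+b = |\cG|/c_1 - 1 \geq 0$ since $c_1 \leq |\cG|$, so the diagonal entries are nonnegative. Then I would apply Gershgorin's disc theorem (valid for arbitrary square matrices): every eigenvalue $\lambda$ satisfies, for some index $i$, $\lambda \leq (M\mW)_{ii} + \sum_{j\neq i}|(M\mW)_{ij}| = (|\cG|/c_1 - 1)w_i + |b|\sum_{j\neq i}w_j$; maximizing the right-hand side over $i$ yields \eqref{eq:rhocollect}.

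Everything here is routine linear algebra. The only two points that require care are the double-counting identity $\mA = (c_1-c_2)\mI + c_2\ones\ones^\top$ (which is exactly where the $c_1$-uniform support and the pair-regularity assumption enter), and the observation that the non-symmetric product $M\mW$ still has a real spectrum with $\rho$ its largest eigenvalue, so that Gershgorin legitimately bounds $\rho$ from above. The sign of $b$ (i.e.\ whether $|\cG|c_2/c_1^2$ exceeds $1$) is what produces the absolute value in \eqref{eq:rhocollect} and the max in \eqref{eq:rhocollectI}; as a consistency check, for the $\tau$-partition sampling of Theorem~\ref{thm:08hs9sd} one has $c_2=0$ and $c_1=1$, matching $b=-1$ there.
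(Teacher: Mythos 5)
Your proof is correct, and for part (i) it is essentially the paper's own argument: the same entrywise double-counting (diagonal count $c_1$, off-diagonal count $c_2$, both relying on the $c_1$-uniform support and the pair-regularity hypothesis), the same reduction via \eqref{eq:theorhoXX2}, and the same application of Gershgorin's theorem to the non-symmetric product with $\mW$; your explicit remark that this product is similar to $\mW^{1/2}M\mW^{1/2}$ and hence has real spectrum equal to that of the symmetric form is a point the paper glosses over but uses implicitly. Where you genuinely diverge is part (ii): the paper observes that for $\mW=\mI$ the matrix $\frac{|\cG|}{c_1^2}\sum_{C\in\cG}e_Ce_C^\top-\ones\ones^\top$ is circulant and invokes the roots-of-unity formula for circulant eigenvalues, finding that only two distinct values occur; you instead write the matrix structurally as $a\mI+b\ones\ones^\top$ and read off its spectrum directly ($a+bn$ on $\mathrm{span}\{\ones\}$, $a$ on the orthogonal complement), which is more elementary, avoids the circulant machinery entirely, and makes it transparent why exactly two eigenvalues appear and why \eqref{eq:rhocollectI} is an equality. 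One peripheral caveat: your closing consistency check against Theorem~\ref{thm:08hs9sd} is only valid for $\tau=1$ (singleton partition), since for a $\tau$--partition sampling with $\tau\geq 2$ the number of sets containing a pair $\{i,j\}$ is $1$ or $0$ depending on whether the pair lies in the same cell, so the pair-regularity hypothesis of this theorem does not hold; this does not affect the proof itself.
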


Note that as long as $\tau\geq 2$, the $\tau$--nice sampling $S$  satisfies the assumptions of the above theorem. Indeed, $\cG$ is the support of $S$ consisting of all subsets of $[n]$ of size $\tau$, $|\cG| = {n \choose \tau}$, $c_1= {n-1 \choose \tau-1}$, and $c_2={n-2 \choose \tau-2}$. As a result, bound~\eqref{eq:rhocollect} simplifies to
	\begin{equation} \label{eq:rhosetcollectW}
	\rho \leq \left( \frac{n}{\tau}  -1\right)  \max_{i=1,\ldots, n} \left\{ w_i+\frac{1}{n-1}\sum_{j\neq i} w_j \right\},
	\end{equation}
	and \eqref{eq:rhocollectI} simplifies to
	\begin{equation} \label{eq:rhosetcollectI}
	\rho =  \frac{n}{\tau}\frac{n-\tau}{n-1}.
	\end{equation}

\subsection{Calculating the iteration complexity for special cases}
\label{sec:98hs98hs8gh}

In this section we consider minibatch SAGA (Algorithm~\ref{alg:SketchJac_subsample}) and calculate its  iteration complexity in special cases using Theorem~\ref{theo:convgen} by pulling together the formulas for $\cL_1, \cL_2, \kappa$ and $\rho$ established in previous sections.  In particular, assume $S$ is $\tau$--uniform and has $c_1$--uniform support with $c_1\geq 1$. In this case, formula \eqref{eq:LhatFuni} for $\cL_1,\cL_2$ from 
Lemma~\ref{lem:Lfhatuni} applies and we have $\cL_1 = \LGconst$ and $\cL_2 = \tau \max_i \left\{\frac{L_i}{w_i}\right\}$.


 Moreover, by Lemma~\ref{lem:hi8s8bvf786sfvs}, $\kappa=\tfrac{\tau}{n}$. By Theorem~\ref{theo:convgen}, if we use the stepsize 
\begin{equation}\label{eq:stepsizeuniform}
\alpha  =  \min\left\{ \frac{1}{4 \cL_1 }, \, \frac{\kappa}{4 \cL_2 \rho/n^2   +\mu  }\right\} = \frac{1}{4}\min\left\{ \frac{1}{ \LGconst }, \, \frac{1 }{ \frac{\rho}{n}\max_{j=1,\ldots,n} \left\{ \frac{L_j}{w_j} \right\}    +\frac{\mu}{4} \frac{n}{\tau}   }\right\},
\end{equation}
then the iteration complexity is given by
\begin{equation}
\max \left\{ \frac{4 \cL_1}{\mu}, \, \frac{1}{\kappa} + 
\frac{4 \rho \cL_2 }{\kappa \mu   n^2}    \right\} \log \left(\frac{1}{\epsilon}\right)
=
\max \left\{ \frac{4\LGconst}{\mu}, \, \frac{n}{\tau} +  \frac{4 \rho}{\mu n}
\max_{i} \left\{ \frac{L_i}{w_i} \right\}     \right\} \log \left(\frac{1}{\epsilon}\right). \label{eq:itercompluni}
\end{equation}
Complexity~\eqref{eq:itercompluni} is listed in line 9 of Table~\ref{tbl:complexity_summary}. The complexities in lines  3, 5 and 10--13 arise as special cases of \eqref{eq:itercompluni} for specific  choices  of $S$:
\begin{itemize}
\item In line 3 we have {\em gradient descent}. This arises for the choice $\mW=\mI$ and $S=[n]$ with probability 1. In this case, $\tau=n$, $\LGconst = L$ and $\rho=0$. So, \eqref{eq:itercompluni} simplifies to
\begin{equation}\label{eq:bu98dg8sbh*YH}
\frac{4L}{\mu} \log \left(\frac{1}{\epsilon}\right).
\end{equation}
\item In line 5 we have {\em uniform SAGA.} We choose $\mW=\mI$ and $S=\{i\}$ with probability $1/n$. We have $\tau=1$ and $\LGconst = L_{\max}$. In view of Theorem~\ref{thm:08hs9sd}, $\rho\leq n$. So, \eqref{eq:itercompluni} simplifies to
\begin{equation}\label{eq:bu98dg8sbh*YH097809}
\left(n+\frac{4L_{\max}}{\mu}\right) \log \left(\frac{1}{\epsilon}\right).
\end{equation}
\item In line 10 we choose $\mW=\mI$ and $S$ is the $\tau$-nice sampling. In this case, Theorem~\ref{thm:rhobnbcollect} says that $\rho =  \frac{n}{\tau}\frac{n-\tau}{n-1}$ (see \eqref{eq:rhosetcollectI}). Therefore, 
 \eqref{eq:itercompluni} reduces to
\begin{equation}\label{eq:no8sh09hs9h0(((}
\max \left\{ \frac{4\LGconst}{\mu}, \, \frac{n}{\tau} +  \frac{n-\tau}{(n-1)\tau}\frac{4 L_{\max}}{\mu }
    \right\} \log \left(\frac{1}{\epsilon}\right).
\end{equation}
\item In line 11 we choose $\mW=\diag(L_i)$ and $S$ is the $\tau$-nice sampling. Theorem~\ref{thm:rhobnbcollect} says that $\rho \leq \tfrac{n-\tau}{\tau}\left(\tfrac{n-2}{n-1}L_{\max} + \tfrac{n}{n-1}
\bar{L}\right)$ (see \eqref{eq:rhosetcollectW}). Therefore, 
 \eqref{eq:itercompluni} reduces to
\begin{equation}\label{eq:nh09s8h09sh9JJ}
\max \left\{ \frac{4\LGconst}{\mu}, \, \frac{n}{\tau} +  \frac{n-\tau }{\tau n} \frac{4 \left( \tfrac{n-2}{n-1}L_{\max} + \tfrac{n}{n-1}\bar{L}\right)}{\mu}
    \right\} \log \left(\frac{1}{\epsilon}\right).
\end{equation}
To simplify the above expression, one may further use the bound $\tfrac{n-2}{n-1}L_{\max} + \tfrac{n}{n-1}\bar{L} \leq L_{\max} + \bar{L}$. In Table~\ref{tbl:complexity_summary} we have listed the complexity in this simplified form. Whether \eqref{eq:no8sh09hs9h0(((} or \eqref{eq:nh09s8h09sh9JJ} is better depends on the constants $\{L_i\}$. Indeed, when there exists $i$ such that $L_{i} \gg L_j$, for $j \neq i$ then
\[ \left(\frac{1}{\tau}  -\frac{1}{n} \right)  \left(L_{\max}+ \bar{L} \right) \approx
\frac{n-\tau}{n\tau}(L_{\max}+\frac{1}{n}L_{\max})
=
\frac{n-\tau}{n\tau}\frac{n+1}{n}L_{\max}
\leq \frac{1}{\tau}\frac{n-\tau}{n-1} L_{\max},
 \] 
 thus ~\eqref{eq:nh09s8h09sh9JJ} is smaller than~\eqref{eq:no8sh09hs9h0(((}.
On the other extreme, when $L_i = L_j$ for all $i,j$, then
\[ \left(\frac{1}{\tau}  -\frac{1}{n} \right)  \left(L_{\max}+ \bar{L} \right) =
2\frac{n-\tau}{n\tau}L_{\max}
\geq \frac{1}{\tau}\frac{n-\tau}{n-1} L_{\max},
 \] 
 so long as $n \geq 1$. In this case~\eqref{eq:nh09s8h09sh9JJ} is larger than
 \eqref{eq:no8sh09hs9h0(((}.
 
\item In line 12 of Table~\ref{tbl:complexity_summary} we let $\mW=\mI$ and $S$ is the $\tau$-partition sampling. In view of Theorem~\ref{thm:08hs9sd}, $\rho\leq \tfrac{n}{\tau}\tau = n$ and
hence
 \eqref{eq:itercompluni} reduces to
\begin{equation}\label{eq:nbo9s80s9h}
\max \left\{ \frac{4\LGconst}{\mu}, \, \frac{n}{\tau} + \frac{4 L_{\max}}{\mu}
    \right\} \log \left(\frac{1}{\epsilon}\right).
\end{equation}
\item  In line 13 of Table~\ref{tbl:complexity_summary} we let $\mW=\diag(L_i)$ and $S$ is the $\tau$-partition sampling. In view of Theorem~\ref{thm:08hs9sd}, $\rho\leq  \frac{n}{\tau}\max_{C\in \cG} \sum_{i\in C} L_i$ and
hence
 \eqref{eq:itercompluni} reduces to
\begin{equation}\label{eq:bs98g9ofgebd}
\max \left\{ \frac{4\LGconst}{\mu}, \, \frac{n}{\tau} + \frac{4 \max_{C\in \cG} \sum_{i\in C} L_i}{\mu \tau}
    \right\} \log \left(\frac{1}{\epsilon}\right).
\end{equation}
Note that the bound in~\eqref{eq:bs98g9ofgebd} is  better than~\eqref{eq:nbo9s80s9h}  because
$ \max_{C \in \cG}\sum_{i \in C} L_i \leq \tau L_{\max}.$
\end{itemize}

\subsection{Comparison with previous mini-batch SAGA convergence results}
\label{sec:hofmann}

Recently in~\cite{HofmannLLM152015}, a method that includes a mini-batch variant of SAGA  was proposed. This work is the most closely related to our minibatch SAGA, and was developed independently from ours. The methods described in~\cite{HofmannLLM152015} can be cast in our framework.
In the language of our paper, in \cite{HofmannLLM152015} the authors update the Jacobian estimate according to~\eqref{eq:SAGA-Jacobian_update}, where $S_k$ is sampled according to a uniform probability with $p_i = \tau/n,$ for all $i=1,\ldots, n.$  What~\cite{HofmannLLM152015} do differently is that instead of introducing the bias-corecting random variable $\theta_{\mS}$ to maintain an unbiased gradient estimate, the gradient estimate is updated using the standard SAGA update~\eqref{eq:SAGAstandard} and this sampling process is done independently of how $S_k$  is sampled for the Jacobian update. Thus at every iteration a gradient $\nabla f_i(x^k)$ is sampled to compute~\eqref{eq:SAGAstandard}, but is then discarded and not used to update the Jacobian update so as to maintain the independence between $\mJ^{k}$ and $g^k.$ By introducing the bias-correcting random variable $\theta_{\mS}$ in our method we avoid the data-hungry strategy used in~ \cite{HofmannLLM152015}.

The analysis provided in~\cite{HofmannLLM152015} shows that, by choosing the stepsize appropriately,  the expectation of a Lyapunov function similar to~\eqref{eq:lyapgen} is less than $\epsilon>0$ after
\begin{equation}\label{eq:hofcomplex} \frac{1}{2} \left(\frac{n}{\tau} + K +\sqrt{\frac{n^2}{\tau^2}+K^2}\right)\log \left(\frac{1}{\epsilon}\right)\end{equation}
iterations, where $K \eqdef \frac{4 L_{\max}}{ \mu}$. When $\tau =1$ this gives an iteration complexity of $O(n +K)\log\frac{1}{\epsilon},$ which is essentially the same complexity as the standard SAGA method.  
The main issue with this complexity is that it decreases only very modestly as $\tau$ increases.
  In particular, on the extreme end when $\tau =n$, since $K \geq 4$, we can approximate 
 $(1 +K)^2 \approx 1+K^2$ and the resulting complexity~\eqref{eq:hofcomplex} becomes
\[ \left(1+\frac{4L_{\max}}{\mu}\right)\log \left(\frac{1}{\epsilon}\right).\]
Yet we know that $\tau =n$ corresponds to  gradient descent, and thus the iteration complexity should be $O(\tfrac{ L}{ \mu }\log(1/\epsilon)),$ which is what we recover in the analysis of all our mini-batch variants. In  Figures~\ref{fig:comp1},~\ref{fig:comp2} and~\ref{fig:comp3} in the experiments in Section~\ref{sec:experiments} we illustrate how~\eqref{eq:hofcomplex} descreases very modestly as $\tau$ increases.

\section{A Refined Analysis  with a Stochastic Lyapunov Function}
\label{sec:theopart}

In this section we perform a refined analysis of JacSketch applied with  a minibatch sketch $\mS =\mI_S $ for a special class of samplings $S$ which pick uniformly at random from a partition of $[n]$ into sets of size $\tau$. \footnote{This is only possible when $n$ is a multiple of $\tau$.}

\begin{assumption} Let $\cG$ be a partition of $[n]$ into sets of size $\tau$. Assume that the sampling $S$ picks sets from the partition $\cG$ uniformly at random. That is, $p_C \eqdef \Prb{S=C}= \frac{\tau}{n}$ for $C\in \cG = \support(S)$. A sampling with these properties is called a $\tau$--partition sampling.
\end{assumption}

 In the terminology introduced in Section~\ref{sec:samplings}, a $\tau$--partition sampling is non-vacuous, proper and $\tau$--uniform. Its support is a partition of $[n]$, and is $1$--uniform. It satisfies Assumption~\ref{ass:proper}.

Restricting our attention to $\tau$--partition samplings will allow us to perform a more in-depth analysis of JacSketch using a {\em stochastic Lyapunov function}. Unlike Theorem~\ref{theo:convgen}, and as explained in Section~\ref{sec:intro-summary-of-results}, our main result  in this section (Theorem~\ref{theo:convpart}) is capable of obtaining the conjectured  rate $O((n+\tfrac{\bar{L}}{\mu}) \log \tfrac{1}{\epsilon})$ for SAGA with importance sampling.

One of the key reasons why we restrict our attention to $\tau$-partition samplings is the fact that  
\begin{equation}\label{eq:89h98gs8}\mI_{C_1}^\top \mI_{C_2} = \begin{cases}
\mI \in \R^{\tau\times \tau}, & \quad C_1 = C_2,\\
0 \in \R^{\tau \times \tau}, & \quad C_1 \neq C_2, \end{cases}\end{equation} for $C_1, C_2\in \cG$. Recall from Lemma~\ref{prop:bus98g90s09} that if $\mW=\mI$, then $\Proj_{\mI_C} = \mI_C \mI_C^\top$. Consequently, for $C_1,C_2\in \cG$ we have
\begin{equation}\label{eq:orthoprojs}
 C_1\neq C_2 \quad \Rightarrow \quad \Proj_{\mI_{C_1}} \Proj_{\mI_{C_2}} =0, \qquad  \qquad C_1 =C_2  \quad \Rightarrow \quad  (\mI-\Proj_{\mI_{C_1}}) \Proj_{\mI_{C_2}} = 0.
\end{equation}
This orthogonality property will be fundamental for controlling the convergence of the gradient estimate in Lemma~\ref{lem:gradcontrapart}. 

\subsection{Convergence theorem}

Recall from  \eqref{eq:controlgradJ}  that the stochastic gradient of the controlled stochastic reformulation \eqref{eq:probstochcon} of the original  finite-sum  problem \eqref{eq:prob} is given by
\begin{equation}\label{eq:gkupdatepartCapp} \nabla f_{\mI_S,\mJ}(x) = \frac{1}{n} \mJ \ones + \frac{1}{p_S n} (\Jac(x)-\mJ)   \Proj_{\mI_S}\ones \end{equation}
provided that  we use the minibatch sketch $\mS = \mI_S$ and  bias-correcting variable $\theta_{\mS} = \theta_{\mI_S} = 1/p_S$ given by Lemma~\ref{prop:bus98g90s09}(vi). This object will appear in our Lyapunov function, evaluated at $x=x^*$ and $\mJ=\mJ^k$. We are now ready to present the main result of this section.

\begin{theorem}[Convergence for  minibatch sketches with $\tau$-partition samplings] \label{theo:convpart}

Let
\begin{itemize}
\item $\mS$ be a minibatch sketch (i.e., $\mS=\mI_S$)\footnote{We can alternatively set $\mS = e_S$ and the same results will hold.} , where $S$ is a $\tau$--partition sampling with support $\cG = \support(S)$,
\item $f_C \eqdef \tfrac{1}{|C|}\sum_{i\in C} f_i$  be $L_C$--smooth and  $\mu$--strongly convex (for $\mu>0$) for all $C \in \cG$,  
\item $\mW = \mI$, $\theta_{\mS} = \frac{1}{p_S}$,
\item  $\{x^k, \mJ^k\}$ be the iterates produced by JacSketch.
\end{itemize}

 Consider the \emph{stochastic Lyapunov function}
\begin{equation}\label{eq:stochlyap}
\Psi_S^k  \eqdef  \norm{x^{k} -x^*}_2^2  +2\sigma_{S} \alpha \norm{\frac{1}{n}\mJ^{k}\ones - \nabla f_{\mI_S,\mJ^{k}}(x^*) }_2^2,\end{equation}
where $\sigma_S = \frac{n}{4\tau L_{S}}$ is a {\em stochastic Lyapunov constant}.  If we use a stepsize that satisfies
\begin{equation}
\alpha \leq  \min_{C \in \cG}\ \frac{p_C }{\mu+\frac{4 L_C\tau}{n} }, \label{eq:alphatheosim}
\end{equation}
then
\begin{equation} \label{eq:recursion_908y8hd8}
\E{\Psi_S^{k} } \quad \leq \quad (1-\mu \alpha)^k \cdot \E{ \Psi_S^0  }.
\end{equation}
This means that if we choose the stepsize equal to the upper bound \eqref{eq:alphatheosim}, then
\begin{equation} \label{eq:itercomplexpart}
 k\geq \max_{C \in \cG}\left\{ \frac{1}{p_C}+\frac{4 L_C}{\mu}\frac{\tau}{n p_C}  \right\} \log\left(\frac{1}{\epsilon} \right) \quad \Rightarrow \quad \E{\Psi_S^k} \leq \epsilon \cdot \E{\Psi^0_S}.
\end{equation}
\end{theorem}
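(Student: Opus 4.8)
The plan is to run the familiar ``distance $+$ Jacobian error'' Lyapunov recursion (as in the proof of Theorem~\ref{theo:convgen}), but with two new ingredients specific to $\tau$--partition samplings: (i) the Jacobian-error part of the Lyapunov function is labelled by an auxiliary random set $S$ drawn independently of the algorithm's sketches $S_0,S_1,\dots$, and the extra expectation over this $S$ is taken only at the very end, which is what produces the per-block smoothness constants $L_C$ in the right places; and (ii) the orthogonality \eqref{eq:orthoprojs}, which makes the stored gradient attached to a \emph{fixed} block $C\in\cG$ either jump to $\nabla f_C(x^k)$ (when $S_k=C$, probability $p_C=\tau/n$) or stay unchanged (when $S_k\neq C$, since then $S_k\cap C=\emptyset$). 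First I would unpack the Lyapunov objects. With $\mW=\mI$, $\theta_\mS=1/p_S$, $p_C=\tau/n$, write $J_C^k\eqdef\tfrac1\tau\mJ^ke_C$ so that $\tfrac1n\mJ^k\ones=\tfrac1{|\cG|}\sum_{C\in\cG}J^k_C=:\bar J^k$; then by \eqref{eq:gkupdatepartCapp} one gets $g^k=\nabla f_{\mI_{S_k},\mJ^k}(x^k)=\bar J^k+\nabla f_{S_k}(x^k)-J^k_{S_k}$, the identity $\tfrac1n\mJ^k\ones-\nabla f_{\mI_S,\mJ^k}(x^*)=J^k_S-\nabla f_S(x^*)$, and hence the second term of $\Psi^k_S$ equals $2\sigma_S\alpha\|J^k_S-\nabla f_S(x^*)\|_2^2$; write $\Phi^k_S$ for this term.

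Next I would derive the recursion for the Jacobian-error term. Fixing a block $C\in\cG$ and conditioning on $x^k,\mJ^k$, (ii) gives
\[\mathbb{E}_{S_k}\!\left[\,\|J^{k+1}_C-\nabla f_C(x^*)\|_2^2\,\right]=p_C\,\|\nabla f_C(x^k)-\nabla f_C(x^*)\|_2^2+(1-p_C)\,\|J^k_C-\nabla f_C(x^*)\|_2^2 .\]
Bounding the first summand by $2L_C\big(f_C(x^k)-f_C(x^*)-\langle\nabla f_C(x^*),x^k-x^*\rangle\big)$ via $L_C$--smoothness and convexity of $f_C$, then multiplying by $2\sigma_C\alpha=\tfrac{n\alpha}{2\tau L_C}$ (so that the prefactor $p_C\cdot 2\sigma_C\cdot 2L_C$ equals exactly $1$ since $p_C=\tau/n$), I obtain
\[\mathbb{E}_{S_k}\!\left[\,2\sigma_C\alpha\|J^{k+1}_C-\nabla f_C(x^*)\|_2^2\,\right]\ \le\ \alpha\big(f_C(x^k)-f_C(x^*)-\langle\nabla f_C(x^*),x^k-x^*\rangle\big)+(1-p_C)\,2\sigma_C\alpha\|J^k_C-\nabla f_C(x^*)\|_2^2 .\]
Averaging over the auxiliary $S$ (independent of $x^k,\mJ^k,S_k$) and using $\mathbb{E}_S[f_S(x)]=f(x)$ from \eqref{eq:h98gf09hf} together with $\sum_{C\in\cG}\nabla f_C(x^*)=0$, the Bregman term collapses to $\alpha\big(f(x^k)-f(x^*)\big)$, leaving a slack of $\mathbb{E}_S\!\left[(p_S-\mu\alpha)\,\Phi^k_S\right]$ relative to the target contraction $(1-\mu\alpha)\,\mathbb{E}_S[\Phi^k_S]$.

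Then I would carry out the standard step for the iterate: expanding $x^{k+1}=x^k-\alpha g^k$, using unbiasedness $\mathbb{E}_{S_k}[g^k]=\nabla f(x^k)$ (see \eqref{eq:unbiasedgrad}) and one-point strong convexity \eqref{eq:strconv} yields
\[\mathbb{E}_{S_k}\!\left[\,\|x^{k+1}-x^*\|_2^2\,\right]\ \le\ (1-\mu\alpha)\|x^k-x^*\|_2^2-2\alpha\big(f(x^k)-f(x^*)\big)+\alpha^2\,\mathbb{E}_{S_k}\!\left[\,\|g^k\|_2^2\,\right].\]
To control $\mathbb{E}_{S_k}\|g^k\|_2^2$ I would split $g^k=\big(\nabla f_{S_k}(x^k)-\nabla f_{S_k}(x^*)\big)+\nabla f_{\mI_{S_k},\mJ^k}(x^*)$ (both pieces coming directly from \eqref{eq:gkupdatepartCapp}) and use $\|a+b\|_2^2\le 2\|a\|_2^2+2\|b\|_2^2$: the first piece is bounded, via $L_C$--smoothness/convexity block by block, by a constant multiple of $f(x^k)-f(x^*)$; the second piece has $\mathbb{E}_{S_k}$-mean $\nabla f(x^*)=0$, so its second moment is at most $\sum_{C\in\cG}p_C\|J^k_C-\nabla f_C(x^*)\|_2^2$, i.e. a weighted sum of precisely the Jacobian errors carried by $\Phi^k_S$. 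This is the analogue of the referenced Lemma~\ref{lem:gradcontrapart}.

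Finally I would add the two estimates and average over $S$, getting $\mathbb{E}_S\mathbb{E}_{S_k}[\Psi^{k+1}_S\mid x^k,\mJ^k]\le(1-\mu\alpha)\,\mathbb{E}_S[\Psi^k_S\mid x^k,\mJ^k]$ as soon as $\alpha^2\mathbb{E}_{S_k}\|g^k\|_2^2$ can be absorbed: its $f(x^k)-f(x^*)$--proportional part against $-2\alpha(f(x^k)-f(x^*))+\alpha(f(x^k)-f(x^*))$, and its Jacobian-error part against the slack $\mathbb{E}_S[(p_S-\mu\alpha)\Phi^k_S]$ produced in the second step. Matching coefficients block by block gives exactly the two requirements $4\alpha L_C\le 1$ and $\alpha(\mu+4L_C\tau/n)\le p_C$ for every $C\in\cG$, the second of which implies the first; this is precisely the stepsize bound \eqref{eq:alphatheosim}. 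Taking total expectation over $(S,S_0,S_1,\dots)$ and iterating gives \eqref{eq:recursion_908y8hd8}, and for $\alpha$ equal to the right-hand side of \eqref{eq:alphatheosim} we have $(1-\mu\alpha)^k\le\epsilon$ whenever $k\ge\tfrac1{\mu\alpha}\log\tfrac1\epsilon=\max_{C\in\cG}\big(\tfrac1{p_C}+\tfrac{4L_C}{\mu}\tfrac{\tau}{np_C}\big)\log\tfrac1\epsilon$, which is \eqref{eq:itercomplexpart}. The main obstacle is this coefficient-matching: the two ``bad'' quantities in $\mathbb{E}_{S_k}\|g^k\|_2^2$ must be dominated simultaneously, one by the descent term and one by the Jacobian slack, and the computation only closes because the stochastic Lyapunov weights $\sigma_S=\tfrac{n}{4\tau L_S}$ are tuned to the $L_C$'s and because $S$ is genuinely independent of the $S_k$'s, so that $\mathbb{E}_S$ commutes with the conditional expectations; keeping both of these straight is the delicate part.
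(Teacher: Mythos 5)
Your proof is correct: the per-block Jacobian recursion is exact thanks to the partition structure, the split of $g^k$ and the variance bound $\mathbb{E}\|Y-\mathbb{E}Y\|_2^2\le\mathbb{E}\|Y\|_2^2$ are valid, and the blockwise coefficient matching closes with precisely the two conditions $4\alpha L_C\le 1$ and $\alpha(\mu+4\tau L_C/n)\le p_C$, the second implying the first, which yields \eqref{eq:alphatheosim}, \eqref{eq:recursion_908y8hd8} and \eqref{eq:itercomplexpart}. The architecture (stochastic Lyapunov function indexed by an auxiliary $S$ independent of the $S_k$'s, contraction of the Jacobian term via \eqref{eq:orthoprojs}, second-moment bound on $g^k$) matches the paper's: your block identity for $J_C^{k+1}$ is the content of Lemma~\ref{lem:gradcontrapart}, and your bound on $\mathbb{E}\|g^k\|_2^2$ is Lemma~\ref{lem:gradient_bounddelta}. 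Where you genuinely diverge is the absorption step. The paper never introduces a $f(x^k)-f(x^*)$ term: it keeps the quantities $\|\nabla f_{\mI_S,\mJ^k}(x^k)-\nabla f_{\mI_S,\mJ^k}(x^*)\|_2^2$ alive and cancels them against the inner product $\langle\nabla f(x^k),x^k-x^*\rangle$ through Lemma~\ref{lem:strconwithsmoothst}, a co-coercivity bound built from \eqref{eq:dotprod_ineq}, and this is exactly where the hypothesis that each $f_C$ is $\mu$--strongly convex is used. You instead invoke only one-point strong convexity \eqref{eq:strconv} of $f$, convert every stochastic gradient difference into a Bregman divergence of $f_C$ via $L_C$--smoothness, and pay for those terms out of the $-2\alpha(f(x^k)-f(x^*))$ descent term and the slack $\mathbb{E}_S[(p_S-\mu\alpha)\Phi_S^k]$; the same constants emerge because $p_C\cdot 2\sigma_C\cdot 2L_C=1$. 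Your route is thus slightly more elementary and formally needs less (convexity and $L_C$--smoothness of the blocks plus strong convexity of $f$, rather than strong convexity of every $f_C$), while the paper's route packages the cancellation into a reusable co-coercivity lemma and avoids function-value bookkeeping altogether. Both give the identical stepsize threshold and rate.
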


\subsection{Gradient estimate contraction}

Here we will show that our gradient estimate contracts in the following sense.

\begin{lemma}\label{lem:gradcontrapart} Let $S$ be the $\tau$--partition sampling, and $\sigma(S) \eqdef \sigma_{S}\geq 0$ be any non-negative random variable. Then
\begin{eqnarray} 
\E{\sigma_{S}  \norm{  \frac{1}{n}\mJ^{k+1}\ones - \nabla f_{\mI_S, \mJ^{k+1}}(x^*) }_2^2} &\leq
&  \E{\sigma_S ( 1 -p_S) \norm{ \frac{1}{n}\mJ^{k}\ones -  \nabla f_{\mI_S, \mJ^{k}}(x^*) }_2^2} \nonumber \\
&&+	\E{ \sigma_{S} p_{S} \norm{ \nabla f_{\mI_S, \mJ^{k}}(x^{k}) - \nabla f_{\mI_S, \mJ^{k}}(x^*)}_2^2}.
\label{eq:jaccontract2}
\end{eqnarray}
\end{lemma}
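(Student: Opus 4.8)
The plan is to work conditionally on the iterate $(x^k,\mJ^k)$ and exploit the orthogonality identities \eqref{eq:orthoprojs}, which are available precisely because $S$ is a $\tau$--partition sampling and $\mW=\mI$. First I would write out the key quantity. Using the formula \eqref{eq:gkupdatepartCapp} for $\nabla f_{\mI_S,\mJ}(x)$ and linearity, the vector $\tfrac1n\mJ^{k+1}\ones - \nabla f_{\mI_S,\mJ^{k+1}}(x^*)$ equals $\tfrac1n\mJ^{k+1}\ones - \tfrac1n\mJ^{k+1}\ones - \tfrac{1}{p_S n}(\Jac(x^*)-\mJ^{k+1})\Proj_{\mI_S}\ones = -\tfrac{1}{p_S n}(\Jac(x^*)-\mJ^{k+1})\Proj_{\mI_S}\ones$; equivalently $\tfrac{1}{p_S n}(\mJ^{k+1}-\Jac(x^*))\Proj_{\mI_S}\ones$. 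Then I substitute the Jacobian update \eqref{eq:jacobsol}, $\mJ^{k+1} = \mJ^k(\mI-\Proj_{\mS_k}) + \Jac(x^k)\Proj_{\mS_k}$, where $\mS_k = \mI_{S_k}$ with $S_k \sim S$ drawn independently. The crucial point is that the expectation on the left of \eqref{eq:jaccontract2} is over \emph{both} the choice of the ``outer'' set $S$ appearing in $\nabla f_{\mI_S,\cdot}$ and the ``inner'' set $S_k$ used in the update, and these must be handled carefully because the Lyapunov function itself carries randomness through $S$ and $\sigma_S$.

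The main obstacle, and the heart of the argument, is the case split coming from \eqref{eq:orthoprojs}. Conditioning on $(x^k,\mJ^k)$ and on the outer set $S$, I take expectation over the inner set $S_k$. When $S_k = C$ for some $C \in \cG$ with $C \neq S$, the product $(\mJ^{k+1}-\Jac(x^*))\Proj_{\mI_S}$ only sees columns of $\mJ^{k+1}$ indexed by $S$, and those columns were not touched by the update (since $\Proj_{\mI_C}\Proj_{\mI_S}=0$ by \eqref{eq:orthoprojs}), so $\mJ^{k+1}\Proj_{\mI_S} = \mJ^k\Proj_{\mI_S}$; this contributes the ``old'' term. When $S_k = S$, we instead get $(\mI-\Proj_{\mI_S})\Proj_{\mI_S}=0$, so $\mJ^{k+1}\Proj_{\mI_S} = \Jac(x^k)\Proj_{\mI_S}$, producing the ``new'' term $\tfrac{1}{p_S n}(\Jac(x^k)-\Jac(x^*))\Proj_{\mI_S}\ones = \nabla f_{\mI_S,\mJ^k}(x^k) - \nabla f_{\mI_S,\mJ^k}(x^*)$ (using \eqref{eq:gkupdatepartCapp} twice). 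The event $\{S_k = S\}$ has conditional probability $p_S$ and its complement $1-p_S$; multiplying by $\sigma_S$ and taking the outer expectation over $S$ then yields exactly \eqref{eq:jaccontract2}. Since the two events are disjoint, there is no cross term and the $\|a+b\|^2 \le 2\|a\|^2+2\|b\|^2$ inequality is not even needed — the split is exact on each event, which is why the constants in \eqref{eq:jaccontract2} are $(1-p_S)$ and $p_S$ rather than something lossier.

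Concretely the steps are: (1) rewrite $\tfrac1n\mJ^{k+1}\ones - \nabla f_{\mI_S,\mJ^{k+1}}(x^*) = \tfrac{1}{p_S n}(\mJ^{k+1}-\Jac(x^*))\Proj_{\mI_S}\ones$ and similarly at step $k$; (2) substitute \eqref{eq:jacobsol} for $\mJ^{k+1}$; (3) condition on $(x^k,\mJ^k,S)$ and split the expectation over $S_k$ into the events $\{S_k=S\}$ and $\{S_k\neq S\}$, applying \eqref{eq:orthoprojs} in each; (4) on $\{S_k\neq S\}$ identify the resulting term with $\|\tfrac1n\mJ^k\ones - \nabla f_{\mI_S,\mJ^k}(x^*)\|_2^2$ and on $\{S_k=S\}$ with $\|\nabla f_{\mI_S,\mJ^k}(x^k)-\nabla f_{\mI_S,\mJ^k}(x^*)\|_2^2$; (5) multiply through by $\sigma_S$, take the outer expectation over $S$, and use the tower rule to conclude. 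I would take care to note that $\sigma_S$ and $p_S$ depend only on the outer set $S$, so they pass through the inner conditional expectation as constants, which is what makes the bookkeeping clean.
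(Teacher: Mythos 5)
Your proposal is correct and follows essentially the same route as the paper's proof: both rest on the identity $\tfrac{1}{n}\mJ\ones - \nabla f_{\mI_S,\mJ}(x^*) = \tfrac{1}{np_S}(\mJ-\Jac(x^*))\Proj_{\mI_S}\ones$, the update \eqref{eq:jacobsol}, the orthogonality relations \eqref{eq:orthoprojs}, and the independence of the outer set $S$ from the inner set $S_k$, with $\sigma_S$ and $p_S$ passing through the inner expectation. Your conditioning on the events $\{S_k=S\}$ and $\{S_k\neq S\}$ is just a clean repackaging of the paper's expansion into terms I, II and a vanishing cross term (computations \eqref{eq:na9sd23} and \eqref{eq:na9sd232}), and it correctly yields the exact constants $1-p_S$ and $p_S$.
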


\begin{proof} For simplicity, in this proof we let $\Jac^k = \Jac(x^k)$ and $\Jac^* = \Jac(x^*)$. Rearranging~\eqref{eq:gkupdatepartCapp}, we have 
\begin{eqnarray}
\frac{1}{n}\mJ^{k+1}\ones - \nabla f_{\mI_S, \mJ^{k+1}}(x^*)  &\overset{\eqref{eq:gkupdatepartCapp} }{=}& \frac{1}{np_{S}}(\mJ^{k+1}-\Jac^* ) \Proj_{\mI_S} \ones \label{eq:gkstartident}\\
  & \overset{\eqref{eq:jacobsol}}{=} & 
\frac{1}{np_{S}}\left(\mJ^{k} -(\mJ^{k}-\Jac^{k}) \Proj_{\mI_{S_k}}-\Jac^* \right) \Proj_{\mI_S}  \ones \nonumber\\
& =&  \frac{1}{np_{S}}(\mJ^{k}-\Jac^*)(\mI- \Proj_{\mI_{S_k}})\Proj_{\mI_S}\ones  + \frac{1}{np_{S}}(\Jac^{k} -\Jac^*)\Proj_{\mI_{S_k}}\Proj_{\mI_S}\ones.\nonumber 
\end{eqnarray}
Taking norm squared on both sides gives
{\footnotesize
\begin{eqnarray}
\Big\|\frac{1}{n}\mJ^{k+1}\ones-\nabla f_{\mI_S, \mJ^{k+1}}(x^*)\Big\|_2^2& = & \underbrace{\frac{1}{n^2p_{S}^2}\Big\|
  \overbrace{(\mJ^{k}-\Jac^*)}^{\mA}(\mI- \Proj_{\mI_{S_k}})\Proj_{\mI_S}\ones
  \Big\|_2^2}_{\text{I}}  +\underbrace{\frac{1}{n^2p_{S}^2}\Big\|\overbrace{ 
(\Jac^{k} -\Jac^*)}^{\mR}\Proj_{\mI_{S_k}}\Proj_{\mI_S}\ones  
  \Big\|_2^2}_{\text{II}}\nonumber \\
  & & + 2\frac{1}{n^2p_{S}^2}\underbrace{\dotprod{
  (\mJ^{k}-\Jac^*)(\mI- \Proj_{\mI_{S_k}})\Proj_{\mI_S}\ones  
(\Jac^{k} -\Jac^*)\Proj_{\mI_{S_k}}\Proj_{\mI_S}\ones    
  }}_{\text{III}}.\label{eq:isuUYB&98}
\end{eqnarray}
}

First, it follows from \eqref{eq:orthoprojs} that expression III is zero.
We now multiply expressions I and II by $\sigma_{S}$  and bound certain conditional expectations of these terms.   Since $S$ and $S_k$ are independent samplings, we have
{\footnotesize	
	\begin{eqnarray}
	\E{\frac{\sigma_{S}}{n^2 p_{S}^2}\norm{\mA(\mI- \Proj_{\mI_{S_k}})\Proj_{\mI_S}\ones}_2^2 \;|\; \mA} & =&
	\sum_{C \in \cG}\sum_{C' \in \cG} p_{C} p_{C'}  \frac{\sigma_{C}}{n^2 p_{C}^2} \norm{\mA (\mI- \Proj_{\mI_{C'}})\Proj_{\mI_{C}}\ones}_2^2\nonumber \\
	& \overset{\eqref{eq:orthoprojs}}{=}&
		\sum_{C \in \cG} \frac{\sigma_{C}}{n^2 p_{C}}   \norm{\mA\Proj_{\mI_{C}}\ones}_2^2 	\sum_{C' \in \cG, \, C' \neq C} p_{C'} \nonumber \\
	& =& 	\sum_{C \in \cG} \frac{\sigma_{C}}{n^2 p_{C}}  ( 1 -p_C)\norm{\mA\Proj_{\mI_{C}}\ones}_2^2\nonumber \\
	& =& 	\sum_{C \in \cG} p_C  \sigma_{C}( 1 -p_C) \frac{1}{n^2 p_{C}^2} \Big\|\mA\Proj_{\mI_{C}}\ones\Big\|_2^2\nonumber \\	
	& \overset{\eqref{eq:gkstartident}}{ =} & \E{\sigma_S ( 1 -p_S)\Big\|\frac{1}{n}\mJ^{k}\ones -  \nabla f_{\mI_S, \mJ^{k}}(x^*) \Big\|_2^2 \;|\; \mJ^{k}}.\label{eq:na9sd23}
	\end{eqnarray}
	}
		
Taking conditional expectation over expression II yields
	\begin{eqnarray}
	\E{\frac{\sigma_{S}}{n^2p_{S}^2}\Big\| \mR \Proj_{\mI_{S_k}}\Proj_{\mI_S}\ones\Big\|_2^2 \;|\; \mR, S_k} 
	&= &  \sum_{C \in \cG}p_C \frac{\sigma_C}{n^2 p_C^2}   \norm{ \mR \Proj_{\mI_{S_k}}\Proj_{\mI_C}\ones}_2^2 \nonumber \\
	& \overset{\eqref{eq:orthoprojs}}{=}&
\frac{\sigma_{S_{k}}}{ n^2 p_{S_{k}}}   \norm{ \mR \Proj_{\mI_{S_k}}\Proj_{\mI_{S_k}} \ones}_2^2 \nonumber \\
&= &\frac{\sigma_{S_{k}}}{ n^2 p_{S_{k}}}   \norm{ \mR \Proj_{\mI_{S_k}} \ones}_2^2 \nonumber \\
		& =&  \sigma_{S_k} p_{S_k} \norm{ \nabla f_{\mI_{S_k}, \mJ^{k}}(x^{k}) - \nabla f_{\mI_{S_k}, \mJ^{k}}(x^*)}_2^2, \label{eq:na9sd232}
	\end{eqnarray}		
	where in the last equation  we used the identity
	\begin{equation}\label{eq:stochgraddiff}
	\norm{ \nabla f_{\mI_C, \mJ}(x) -\nabla f_{\mI_C, \mJ}(y) }_2^2 = 
	\norm{ \tfrac{1}{n p_C}(\Jac(x) - \Jac(y)) \Proj_C \ones }_2^2, \quad \forall \mJ \in \R^{d \times n}, \forall C \in \cG,
	\end{equation}
	which in turn is a specialization of \eqref{eq:8gd088gs899} to the minibatch sketch $\mS=\mI_S$ and the specific choice of the bias-correcting variable $\theta_\mS = 1/p_S$. It remains to take expectation of \eqref{eq:na9sd23} and~\eqref{eq:na9sd232}, apply the tower property,  and combine this with \eqref{eq:isuUYB&98}. \hfill \qed
\end{proof}

\subsection{Bounding the second moment of $g^k$}

In the next lemma we bound the second moment of our gradient estimate $g^k$. 


\begin{lemma}\label{lem:gradient_bounddelta} 
	The second moment of the gradient estimate is bounded by
\begin{eqnarray}\label{eq:gradbndsubdelta2}
\E{\norm{g^k}_2^2 \;|\; \mJ^k, x^k} & \leq 	&
2\E{\norm{\nabla f_{\mI_{S}, \mJ^k}(x^k)-\nabla f_{\mI_{S}, \mJ^k}(x^*) }_2^2 \;|\; \mJ^k,x^k} \notag \\
&& \quad +2\E{\Big\| \nabla f_{\mI_{S}, \mJ^{k}}(x^*)-\frac{1}{n}\mJ^{k}\ones\Big\|_2^2 \;|\; \mJ^k, x^k}.\end{eqnarray}
\end{lemma}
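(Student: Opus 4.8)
The starting observation is that, in the present setting, $g^k$ has the explicit form $g^k = \nabla f_{\mI_S,\mJ^k}(x^k)$, where $S=S_k$ is the $\tau$--partition set sampled at iteration~$k$; this is immediate from the definition of $g^k$ in Algorithm~\ref{alg:SketchJac2} (equivalently, from \eqref{eq:g^k=intro} together with \eqref{eq:controlgradJ} and $\theta_{\mI_S}=1/p_S$). First I would split
\[ g^k \;=\; \underbrace{\left(\nabla f_{\mI_S,\mJ^k}(x^k) - \nabla f_{\mI_S,\mJ^k}(x^*)\right)}_{a} \;+\; \underbrace{\nabla f_{\mI_S,\mJ^k}(x^*)}_{b}, \]
apply $\norm{a+b}_2^2 \le 2\norm{a}_2^2 + 2\norm{b}_2^2$, and take the expectation conditional on $(\mJ^k,x^k)$, given which the only remaining source of randomness is the draw of $S_k$. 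This produces the first term of the claimed bound verbatim, so that it only remains to show $\E{\norm{b}_2^2 \;|\; \mJ^k,x^k} \le \E{\norm{b-\tfrac1n\mJ^k\ones}_2^2 \;|\; \mJ^k,x^k}$.

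For this I would invoke conditional unbiasedness: by \eqref{eq:unbiasedgrad} (applicable because $\mI_S$ is an unbiased sketch with bias-correcting variable $\theta_{\mI_S}=1/p_S$; see Lemma~\ref{prop:bus98g90s09}(vi)), one has $\E{b \;|\; \mJ^k,x^k} = \nabla f(x^*) = 0$, the last equality because $x^*$ minimizes the convex function $f$. Since $\tfrac1n\mJ^k\ones$ is deterministic given $\mJ^k$, the decomposition $\E{\norm{X-c}_2^2} = \E{\norm{X-\E X}_2^2} + \norm{\E X-c}_2^2$ applied to $X=b$ and $c=\tfrac1n\mJ^k\ones$ gives
\[ \E{\norm{b}_2^2 \;|\; \mJ^k,x^k} = \E{\norm{b-\nabla f(x^*)}_2^2 \;|\; \mJ^k,x^k} \le \E{\norm{b-\tfrac1n\mJ^k\ones}_2^2 \;|\; \mJ^k,x^k}. \]
Substituting back and recalling $b=\nabla f_{\mI_S,\mJ^k}(x^*)$ yields exactly the second term of the statement, which completes the argument.

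This lemma presents no real difficulty — it is a conditional bias--variance split followed by the elementary fact that recentering a random vector at any constant can only enlarge its expected squared norm. The one place to be slightly careful is that this last fact is being used precisely because $b$ is centered at $0$, which is where the bias-correcting choice $\theta_{\mS}=1/p_S$ enters: without it, $b$ would not have conditional mean $\nabla f(x^*)$ and the recentering inequality would fail.
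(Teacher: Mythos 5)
Your proof is correct and is essentially the paper's argument: the paper performs the same add-and-subtract of $\tfrac{1}{np_{S_k}}\Jac(x^*)\Proj_{\mI_{S_k}}\ones$ (which is exactly your split $g^k = \bigl(\nabla f_{\mI_S,\mJ^k}(x^k)-\nabla f_{\mI_S,\mJ^k}(x^*)\bigr) + \nabla f_{\mI_S,\mJ^k}(x^*)$ written in Jacobian form), applies $\norm{a+b}_2^2 \le 2\norm{a}_2^2+2\norm{b}_2^2$, and then uses $\E{\norm{X-\E{X}}_2^2}\le \E{\norm{X}_2^2}$ together with $\Jac(x^*)\ones = n\nabla f(x^*)=0$, which is the same recentering inequality you invoke via the bias--variance decomposition with $\E{b\,|\,\mJ^k,x^k}=\nabla f(x^*)=0$.
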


\begin{proof}
	Adding and subtracting $\tfrac{1}{np_{S_k}} \Jac(x^*) \Proj_{\mI_{S_k}} \ones $ from~\eqref{eq:gkupdatepartCapp} gives
	\[g^k = \frac{1}{n}\mJ^{k}\ones  - \frac{1}{n p_{S_k}}(\mJ^{k} - \Jac(x^*)) \Proj_{\mI_{S_k}}  \ones + \frac{1}{n p_{S_k}}(\Jac(x^k)-\Jac(x^*))\Proj_{\mI_{S_k}} \ones. \] 
	Taking norm squared on both sides, and using the bound $\|a+b\|_2^2\leq 2\|a\|_2^2 + 2\|b\|_2^2$ gives
	\begin{eqnarray}
	\norm{g^{k}}_2^2 & \leq  & 
		\frac{2}{n^2 p_{S_k}^2} \norm{(\Jac(x^k)-\Jac(x^*))\Proj_{\mI_{S_k}} \ones }_2^2 +
	\frac{2}{n^2 } \norm{ \tfrac{1}{p_{S_k}}(\mJ^{k} - \Jac(x^*)) \Proj_{\mI_{S_k}} \ones   -\mJ^{k} \ones}_2^2 \nonumber \\
	& \overset{\eqref{eq:stochgraddiff}}{=} &
	2\norm{\nabla f_{\mI_{S_k}, \mJ^k}(x^k)-\nabla f_{\mI_{S_k}, \mJ^k}(x^*)}_2^2 +
	\frac{2}{n^2} \underbrace{\norm{ \tfrac{1}{p_{S_k}}(\mJ^{k} - \Jac(x^*)) \Proj_{\mI_{S_k}} \ones   -\mJ^{k} \ones}_2^2}_{A}. \label{eq:gknorm2bnd}
	\end{eqnarray}
	Taking expectation of the $A$ term, we get
	{\footnotesize
	\begin{eqnarray*}
	\mathbb{E}\Big[ \Big\| \underbrace{\tfrac{1}{p_S}(\mJ^{k} - \Jac(x^*)) \Proj_{\mI_{S}} \ones}_{X}   -\underbrace{\mJ^{k} \ones}_{\E{X}}\Big\|_2^2 \;|\; \mJ^k,x^k \Big]& \leq &	
	\E{\norm{\tfrac{1}{p_S}(\mJ^{k} - \Jac(x^*)) \Proj_{\mI_S} \ones}_2^2 \;|\; \mJ^k, x^k}\nonumber \\
	& \overset{\eqref{eq:gkstartident}}{=}& n^2 \E{\norm{\nabla f_{\mI_S, \mJ^{k}}(x^*) -\frac{1}{n}\mJ^{k}\ones}_2^2 \;|\; \mJ^k, x^k},
	\end{eqnarray*}
	}
where we used the inequality $\E{\norm{X-\E{X}}_2^2} \leq \E{\norm{X}_2^2}$.
	The result follows by combining the above with~\eqref{eq:gknorm2bnd}. \hfill \qed
\end{proof}

\subsection{Smoothness and strong convexity of $f_{\mI_C, \mJ}$}

Recalling the setting of Theorem~\ref{theo:convpart}, we assume that each $f_C$ is $\mu$--strongly convex and $L_C$--smooth: 
\[
  f_C(y) + \langle \nabla f_C(y), x-y \rangle + \frac{\mu}{2} \norm{x-y}_2^2 \leq f_C(x) \leq  f_C(y) + \langle \nabla f_C(y), x-y \rangle + \frac{L_C}{2} \norm{x-y}_2^2\]
for all $ C\in \cG$. 
It is known (see Section 2.1 in~\cite{NesterovBook}) that the above conditions imply the following inequality:
\begin{equation}
 \dotprod{\nabla f_C(x)-\nabla f_C(y), x-y} \geq \frac{\mu L_C }{\mu +L_C} \norm{x-y}_2^2 +\frac{1}{\mu+L_C} \norm{\nabla f_C(x)-\nabla f_C(y)}_2^2, \label{eq:dotprod_ineq}
\end{equation}
for all $ x,y \in \R^d$. A  consequence of these assumptions that will be useful to us is that the function $f_{\mI_C, \mJ}$ is $\frac{\tau \mu}{n p_C}  $--strongly convex and $\frac{\tau L_C}{n p_C}  $--smooth. This can in turn be used to establish the next lemma, which will be used in the proof of Theorem~\ref{theo:convpart}:
\begin{lemma} \label{lem:strconwithsmoothst}Under the assumptions of Theorem~\ref{theo:convpart} (in particular, assumptions on $f$ and $S$), we have
	\begin{equation}
	\dotprod{\nabla f(x)-\nabla f(y), x-y} \geq \frac{\mu}{2} \norm{x-y}_2^2 +\ED{\frac{n p_S}{2\tau L_S} \norm{\nabla f_{\mI_{S}, \mJ}(x)-\nabla f_{\mI_{S}, \mJ}(y)}_2^2}, \label{eq:dotprod_Cineq}
	\end{equation}
for all $x,y \in \R^d$ and $ \mJ \in \R^{d \times n}$.
\end{lemma}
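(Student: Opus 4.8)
The plan is to apply the standard strong-convexity-plus-smoothness inequality \eqref{eq:dotprod_ineq} not to $f$ or to the individual $f_C$, but to the rescaled functions $f_{\mI_C,\mJ}$, and then average over $C\in\cG$. Two preliminary observations make this work. First, since the control variate $z_{\mI_C}$ is linear in $x$, the gradient difference $\nabla f_{\mI_C,\mJ}(x)-\nabla f_{\mI_C,\mJ}(y)=\nabla f_{\mI_C}(x)-\nabla f_{\mI_C}(y)=\tfrac{\tau}{np_C}\left(\nabla f_C(x)-\nabla f_C(y)\right)$ does not depend on $\mJ$ (a special case of \eqref{eq:8gd088gs899}); in particular the right-hand side of \eqref{eq:dotprod_Cineq} is the same for every $\mJ\in\R^{d\times n}$. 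Second, as recorded just before the statement of the lemma, $f_{\mI_C,\mJ}$ is $\tfrac{\tau\mu}{np_C}$--strongly convex and $\tfrac{\tau L_C}{np_C}$--smooth (it differs from $\tfrac{\tau}{np_C}f_C$ by an affine function of $x$).

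Next I would apply inequality \eqref{eq:dotprod_ineq}, with $m=\tfrac{\tau\mu}{np_C}$ and $M=\tfrac{\tau L_C}{np_C}$ in the roles of the strong-convexity and smoothness constants, to the function $f_{\mI_C,\mJ}$. For each fixed $C\in\cG$ this yields
\[
\dotprod{\nabla f_{\mI_C,\mJ}(x)-\nabla f_{\mI_C,\mJ}(y),\,x-y}\ \geq\ \frac{\tau\mu L_C}{np_C(\mu+L_C)}\norm{x-y}_2^2+\frac{np_C}{\tau(\mu+L_C)}\norm{\nabla f_{\mI_C,\mJ}(x)-\nabla f_{\mI_C,\mJ}(y)}_2^2 .
\]
Since the strong-convexity constant never exceeds the smoothness constant we have $\mu\leq L_C$, hence $\mu+L_C\leq 2L_C$, so the first coefficient is at least $\tfrac{\tau\mu}{2np_C}$ and the second is at least $\tfrac{np_C}{2\tau L_C}$. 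Because $S$ is a $\tau$--partition sampling, $p_C=\tau/n$ for every $C\in\cG$, so $np_C=\tau$ and the first coefficient is in fact bounded below by $\tfrac{\mu}{2}$, the same value for all $C$.

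Finally I would take expectation over $S\sim\cD$, i.e.\ multiply the per-$C$ bound by $p_C$ and sum over $C\in\cG$. On the left, linearity of the inner product together with $\nabla f(x)-\nabla f(y)=\ED{\nabla f_{\mI_S,\mJ}(x)-\nabla f_{\mI_S,\mJ}(y)}$ (the unbiasedness of $\nabla f_{\mS,\mJ}$) gives $\ED{\dotprod{\nabla f_{\mI_S,\mJ}(x)-\nabla f_{\mI_S,\mJ}(y),x-y}}=\dotprod{\nabla f(x)-\nabla f(y),x-y}$. On the right, the $\norm{x-y}_2^2$ coefficient equals $\tfrac{\mu}{2}$ deterministically, so it factors out of the expectation, while the gradient-difference term stays under $\ED{\cdot}$; this is exactly \eqref{eq:dotprod_Cineq}. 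There is no genuine obstacle here: the only points needing care are the scaling factor $\tfrac{\tau}{np_C}$ relating $f_{\mI_C,\mJ}$ to $f_C$ (and the corresponding constants), the elementary bound $\mu+L_C\leq 2L_C$, and the use of $p_C=\tau/n$, valid precisely because $S$ is a $\tau$--partition sampling, to obtain the clean $\tfrac{\mu}{2}\norm{x-y}_2^2$ term after averaging.
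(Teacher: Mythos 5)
Your proposal is correct and follows essentially the same route as the paper's proof: apply \eqref{eq:dotprod_ineq} to $f_{\mI_S,\mJ}$ with its strong convexity constant $\tfrac{\tau\mu}{np_S}$ and smoothness constant $\tfrac{\tau L_S}{np_S}$, weaken the coefficients via $\mu+L_S\leq 2L_S$, use $p_S=\tau/n$ to get the clean $\tfrac{\mu}{2}$ term, and take expectation together with unbiasedness of $\nabla f_{\mI_S,\mJ}$. The extra observations you add (the gradient difference being independent of $\mJ$ and the explicit rescaling to $f_C$) are accurate but not needed beyond what the paper already records before the lemma.
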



\begin{proof}
Applying~\eqref{eq:dotprod_ineq} to the function $f_{\mI_S, \mJ}$  gives
\begin{eqnarray}
 \dotprod{\nabla f_{\mI_S, \mJ}(x)-\nabla f_{\mI_S, \mJ}(y), x-y} &\geq & \frac{\tau}{n p_S}\frac{ \mu L_S }{\mu +L_S} \norm{x-y}_2^2 +\frac{n p_S}{\tau(\mu+L_S)}\norm{\nabla f_{\mI_S, \mJ}(x)-\nabla f_{\mI_S, \mJ}(y)}_2^2\nonumber \\
 & \geq &  \frac{\tau\mu  }{2n p_S} \norm{x-y}_2^2 +\frac{n p_S}{2\tau L_S}\norm{\nabla f_{\mI_S, \mJ}(x)-\nabla f_{\mI_S, \mJ}(y)}_2^2\nonumber.
\end{eqnarray}
Taking expectation over both sides over $S$, noting that $p_S = \frac{\tau}{n}$, and recalling that $\nabla f_{\mI_S,\mJ}(x)$ is an unbiased estimator of $\nabla f(x)$, we get the result.
 \hfill \qed
\end{proof}

\subsection{Proof of Theorem~\ref{theo:convpart}}

Let $\EE{k}{\cdot}$ denote expectation conditional on $\mJ^k$ and $x^k$. We can write
\begin{eqnarray}
\EE{k}{\norm{x^{k+1} -x^*}_2^2 } &\overset{ \eqref{eq:xupdate}}{=} & \EE{k}{\norm{x^k  -x^* - \alpha g^{k}}_2^2} \nonumber \\
& \overset{\eqref{eq:unbiasedgrad}}{=} &  \norm{x^k  -x^*}_2^2 -2\alpha \dotprod{\nabla f(x^k), x^k  -x^*}  + \alpha^2\EE{k}{\norm{g^{k}}_2^2}\nonumber \\ 
& \overset{\eqref{eq:dotprod_Cineq}}{\leq} & (1-\mu \alpha )\norm{x^k  -x^*}_2^2  -\alpha\EE{k}{\frac{n p_S}{\tau  L_S} \norm{\nabla f_{\mI_S, \mJ^k}(x^k)-\nabla f_{\mI_S, \mJ^k}(x^*)}_2^2}    + \alpha^2\EE{k}{\norm{g^{k}}_2^2}\nonumber  \\
& \overset{\eqref{eq:gradbndsubdelta2}}{\leq} &  
(1-\mu \alpha )\norm{x^k  -x^*}_2^2   + 2\alpha^2\EE{k}{\norm{ \frac{1}{n}\mJ^{k}\ones-\nabla f_{\mI_S, \mJ^{k}}(x^*)}_2^2}
  \nonumber \\
 & &+2\alpha\EE{k}{\left(\alpha-\frac{n p_S}{2\tau L_S} \right) \norm{\nabla f_{\mI_S, \mJ^k}(x^k)-\nabla f_{\mI_S, \mJ^k}(x^*)}_2^2}.
\label{eq:convstepsub1} 
\end{eqnarray}
Next, after taking expectation in \eqref{eq:convstepsub1}, applying the tower property, and  subsequently adding the  term \\ $2\alpha\E{\sigma_{S}  \norm{\frac{1}{n}\mJ^{k+1}\ones - \nabla f_{\mI_S,\mJ^{k+1}}(x^*) }_2^2}$ to both sides of the resulting inequality, we get
\begin{eqnarray}
 \E{\Psi_S^{k+1}}  & \leq &
\E{ (1-\mu \alpha )\norm{x^k  -x^*}_2^2}  +2\alpha\E{ \left(\alpha-\frac{n p_S}{2\tau L_S} \right) \norm{\nabla f_{\mI_S, \mJ^k}(x^k)-\nabla f_{\mI_S, \mJ^k}(x^*)}_2^2}\nonumber  \\
&&   + 2\alpha^2\E{\norm{\frac{1}{n}\mJ^{k}\ones -  \nabla f_{\mI_S, \mJ^{k}}(x^*)}^2}
+2\alpha\E{\sigma_{S}  \norm{\frac{1}{n}\mJ^{k+1}\ones - \nabla f_{\mI_S,\mJ^{k+1}}(x^*) }_2^2} \nonumber
\\
&\overset{\eqref{eq:jaccontract2}}{\leq}  &
\mathbb{E}\Big[\underbrace{\left(1-\mu \alpha \right)}_{\text{I}}\norm{x^k  -x^*}_2^2\Big]
 +2 \alpha  \mathbb{E}\Big[\sigma_S \underbrace{\left( 1 -p_S +\frac{\alpha}{\sigma_S} \right)}_{\text{II}}\norm{ \frac{1}{n}\mJ^{k}\ones - \nabla f_{\mI_S, \mJ^{k}}(x^*)}_2^2\Big] \nonumber  \\
&&      +2\alpha\mathbb{E}\Big[\underbrace{\left(\alpha+\sigma_{S} p_{S}-\frac{n p_S}{2\tau L_S} \right)}_{\text{III}} \norm{\nabla f_{\mI_S, \mJ^k}(x^k)-\nabla f_{\mI_S, \mJ^k}(x^*)}_2^2\Big].\label{eq:amo89n238hr}
\end{eqnarray}

Next, we determine a bound on $\alpha$ so that III $\leq0$. Choosing
\begin{equation}
 \alpha  +\sigma_{C} p_{C} -\frac{n p_C}{2\tau  L_{C}} \leq 0, \quad \forall C\in \cG
\qquad \Rightarrow  \qquad \alpha \leq \frac{n p_C}{2\tau L_{C}} - \sigma_{C} p_{C}, \quad \forall C\in \cG, \label{eq:920ncn29}
\end{equation}
guarantees that III $\leq 0$, and thus the last term in term in~\eqref{eq:amo89n238hr} can be safely dropped. Next, to build a recurrence and conclude the convergence proof, we bound the stepsize $\alpha$ so that II $\leq$ I; that is,
\begin{equation}\label{eq:andiuoh29}
 1- p_{C} +\frac{\alpha}{\sigma_{C} } \leq  1- \alpha \mu, \quad \forall C\in \cG \qquad \Rightarrow\qquad \alpha \leq \frac{\sigma_{C} p_{C} }{\mu\sigma_{C}+1},\quad \forall C\in \cG. \end{equation}
 
Consequently,
\[
\E{\Psi^{k+1}_S}   \leq   \E{(1-\mu \alpha )\norm{x^k  -x^*}_2^2}   + 
2  \alpha  \E{\sigma_{S}(1-\mu \alpha )\norm{\frac{1}{n}\mJ^{k}\ones - \nabla f_{\mI_S, \mJ^{k}}(x^*) }_2^2} \\
= (1-\mu \alpha)\E{\Psi^k_S}.\nonumber
\]

Since $\sigma_S = \frac{n}{4\tau L_{S}}$, in view of \eqref{eq:920ncn29} and~\eqref{eq:andiuoh29} the combined bound on $\alpha$ is
\[
\alpha \leq \min \left\{ \frac{np_C}{4 \tau L_C} ,
\frac{p_C }{\mu+4\frac{\tau}{n} L_{C}} \right\}  = \frac{p_C }{\mu+4\frac{\tau}{n} L_{C}}, \quad \forall C \in \cG.
\]
Hence, we have established the recursion \eqref{eq:recursion_908y8hd8}.

\subsection{Calculating the iteration complexity in special cases}
\label{subsec:calitercomspec}

In this section we consider the special case of JacSketch analyzed via Theorem~\ref{theo:convpart}---minibatch SAGA with $\tau$--partition sampling---and look at further special cases by varying the minibatch size $\tau$ and probabilities. Our aim is to justify the complexities appearing in Table~\ref{tbl:complexity_summary}. In view of Theorem~\ref{theo:convpart} the iteration complexity is given by
\begin{equation}
\max_{C\in \cG}  \left( \frac{1}{p_C} + \frac{\tau}{n p_C} \frac{4 L_C}{\mu}\right) \log \left(\frac{1}{\epsilon}\right), \label{eq:b9898d}
\end{equation}
where $\cG=\support(S)$. Complexity~\eqref{eq:b9898d} is listed in line 2 of Table~\ref{tbl:complexity_summary}. The complexities in lines  4, 6, 8 and 14 arise as special cases of \eqref{eq:b9898d} for specific  choices  of $\tau$ and probabilities $p_C$.

\begin{itemize}
\item In line 4 we have {\em gradient descent}. This is obtained by choosing $\cG = \{[n]\}$ (whence  $p_{[n]}=1$, $\tau=n$ and $L_{[n]}=L$),  which is why \eqref{eq:b9898d} simplifies to
\begin{equation} \label{eq:ih80089h0fh89s8*^b}\left(1 + \frac{4 L}{\mu} \right) \log \left(\frac{1}{\epsilon}\right). \end{equation}

\item In line 6 we consider {\em uniform SAGA}. That is, we choose $\tau=1$ and $p_i=1/n$ for all $i$. We have $\cG = \{\{1\}, \{2\}, \dots, \{n\}\}$ and $L_{\{i\}}=L_i$. Therefore,
 \eqref{eq:b9898d} simplifies to
\begin{equation} \label{eq:ihb98sg9s08gb}\left( n + \frac{4 L_{\max}}{\mu} \right) \log \left(\frac{1}{\epsilon}\right). \end{equation}
 This is essentially the same\footnote{With the difference being that in~\cite{SAGA_Nips} the iteration complexity is $2\left(n+\left.L_{\max}\right/\mu   \right)\log \left(\frac{1}{\epsilon}\right),$ thus a small constant change.} complexity result given in~\cite{SAGA_Nips}.
 
\item In line 8 we consider {\em SAGA with importance sampling}. This is the same setup as above, except we choose 
\begin{equation} \label{eq:optprobs}
p_i  =  \frac{\mu n +4 L_i}{\sum_{j=1}^n n\mu +4 L_j} ,
\end{equation}
which is the optimal choice minimizing the  complexity bound in $p_1, \dots,p_n$.  With these optimal probabilities, the stepsize bound becomes
$
\alpha \leq \frac{1}{n\mu +4 \bar{L}},
$
 and by choosing the maximum allowed stepsize  the resulting iteration complexity is 
\begin{equation} \label{eq:itercomplexopt}
\left(n +\frac{4 \bar{L}}{\mu}  \right)
 \log \left(\frac{1}{\epsilon}\right).
\end{equation}
Now consider the probabilities $p_i = \frac{L_i}{\sum_{j=1}^n L_j}$  suggested in~\cite{Schmidtnonuni}. Using our bound, these lead to the    complexity 
  \begin{equation} \label{eq:itercomplexLimark}
 \max_{i=1,\ldots, n}\left\{ \frac{\sum_{j=1}^n L_j}{L_i}+4\frac{ \sum_{j=1}^n L_j}{\mu n }  \right\} \log \frac{1}{\epsilon}  = 
\left(\frac{n\bar{L} }{L_{\min}}+\frac{4\bar{L}}{\mu} \right)\log \left(\frac{1}{\epsilon}\right).
\end{equation}
Comparing this with~\eqref{eq:itercomplexopt}, we see that this non-uniform sampling offers a significant speed up over uniform sampling if  $ n\mu \leq L_{\min}.$  However, our rate \eqref{eq:itercomplexopt} is always better than both \eqref{eq:ihb98sg9s08gb} and  \eqref{eq:itercomplexLimark}. The rate we establish was conjectured to hold for a ``properly'' designed SAGA method in~\cite{Schmidtnonuni}; and we resolve this conjecture.
\item Finally, in line 14 of Table~\ref{tbl:complexity_summary} we optimize over probabilities  $p_C$ directly; that is we extend the importance sampling described above to any $\tau$. Minimizing the complexity bound over the probabilities, and noting that $|\cG| = \frac{n}{\tau}$, this leads to the rate
 \begin{equation} \label{eq:itercomplexoptmini}
\left(\frac{n}{\tau} +\frac{4\frac{1}{|\cG|}\sum_{C \in \cG}L_C}{\mu}  \right)
 \log \left(\frac{1}{\epsilon}\right) .
\end{equation}

This iteration complexity also applies to the reduced memory variant of SAGA~\eqref{eq:jacreducedmem}. This is because Theorem~\ref{theo:convpart} also holds for sketches $\mS = e_S$ where $S$ is a $\tau$--partition sampling. To see this, note that  our analysis in this section relies on the orthogonality property~\eqref{eq:orthoprojs} which also holds for $\mS = e_S$  since (for $\mW=\mI$) we have:
\[ \Proj_{e_{C_1}} \Proj_{e_{C_2}} = \frac{1}{\tau}e_{C_1}(\underbrace{e_{C_1}^\top e_{C_2}}_{=0})e_{C_2}^\top \frac{1}{\tau}= 0,\quad \mbox{for }C_1,C_2 \in \cG,\quad C_1 \neq C_2.\]   Lemmas~\ref{lem:gradcontrapart}, \ref{lem:gradient_bounddelta} and~\ref{lem:strconwithsmoothst} depend on the sketch through $\nabla f_{\mS, \mJ}(x^*)$ only, which in turn depends on the sketch through $ \Proj_{\mS}e$, and it is easy to see that if either $\mS = \mI_S$ or $\mS = e_S$, we have    $ \Proj_{\mS}e = e_S.$

\end{itemize}

\section{Experiments} \label{sec:experiments}

We perform several experiments to validate the theory, and also test the practical relavance of non-uniform SAGA~\eqref{eq:nonuniSAGAup} with the optimized probability distribution~\eqref{eq:optprobs}.  All of our code for these experiments was written in Julia and can be found on github in \url{https://github.com/gowerrobert/StochOpt.jl}. 

In our experiments we test either ridge regression 
\begin{equation}\label{eq:ridge}
f(x) = \frac{1}{2n}\norm{\mA^\top x - y}_2^2 + \frac{\lambda}{2}\norm{x}_2^2,\end{equation}
or logistic regression
\begin{equation}\label{eq:logistic}
f(x) = \frac{1}{n}\sum_{i=1}^n 
\log\left(1 +e^{-y_i\dotprod{a_i,x}}\right) +\frac{\lambda}{2}\norm{x}_2^2,
\end{equation}
where $\mA = [a_1,\ldots, a_n] \in \R^{d \times n},$ $y \in \R^n$ is the given data and $\lambda >0$ the regularization parameter.

\subsection{New non-uniform sampling using optimal probabilities}

First we compare non-uniform SAGA using the new optimized importance probabilities~\eqref{eq:optprobs} against using the probabilities 
$p_i = \left.L_i \right/\overline{L}$
 as suggested in~\cite{Schmidtnonuni}. When $n\mu$ is significantly smaller than $L_i$ for all $i$ then the two sampling are very similar. But when  $n\mu$ is relatively large, then the optimized probabilities~\eqref{eq:optprobs}  can be much closer to a uniform distribution as compared to using $p_i = \left.L_i \right/\overline{L}$. We illustrate this by solving a ridge regression problem~\eqref{eq:ridge}, using generated data such that
\begin{equation}
\mA^\top x = y +\epsilon,
\end{equation}
where the elements of $\mA$ and $x$ are sampled from the standard Gaussian distribution $\cN(0,1)$, and the elements of $\epsilon$ are sampled from $\cN(0,10^{-3})$. 
s It is not hard to see that the  smoothness constants $\{L_i\}$ are given by $L_i=\norm{a_i}_2^2+\lambda$ for $i\in [n]$. 
We scale the columns of $\mA$ so that $\norm{a_1}_2^2 =1$ and $\norm{a_i}_2^2 = \frac{1}{n^2},$ for $i=2,\ldots, n,$ and set the regularization parameter $\lambda = \frac{1}{n^2}.$ Consequently, $L_{\max}  = 1+ \frac{1}{n^2}$, $L_{i} = \frac{2}{n^2}$ for $i=1,\ldots, n$,
$\overline{L} = \frac{(n+1)^2 -1}{n^3}$ and $\mu = \tfrac{1}{n}\lambda_{\min}(\mA\mA^\top)+ \frac{1}{n^2}$.  In this case the iteration complexity of non-uniform SAGA with the optimal probabilities~\eqref{eq:itercomplexopt} is given by
\begin{equation} \label{eq:itercomplexoptexe}
\left(n +4\frac{(n+1)^2 -1}{\mu n^3}  \right)
 \log \left(\frac{1}{\epsilon} \right).\end{equation}  
The complexity~\eqref{eq:itercomplexLimark} which results from using the probabilities $p_i = \left.L_i \right/\overline{L}$ is given by
\begin{equation} \label{eq:itercomplexLimarkexe}
\frac{(n+1)^2 -1}{ n^3} \left(\frac{n^3}{2}+\frac{4}{\mu} \right)\log\left( \frac{1}{\epsilon}\right) .
\end{equation}
Now we consider the regime where $n \rightarrow \infty,$ in which case $\mu \rightarrow \cO(\frac{1}{n^2})$ and consequently~\eqref{eq:itercomplexoptexe}$\rightarrow \cO(n)\log\frac{1}{\epsilon}$   and in contrast~\eqref{eq:itercomplexLimarkexe} $\rightarrow \cO(n^2)\log\frac{1}{\epsilon}.$

Thus the iteration complexity~\eqref{eq:itercomplexLimarkexe} will grow quadratically while~\eqref{eq:itercomplexoptexe} grows linearly in $n$. We illustrate this in Figures~\ref{fig:SAGALi1},~\ref{fig:SAGALi2}  and~\ref{fig:SAGALi3} where we set $n=10$, $n=100$ and $n=1000$, respectively. In all figures we see that  \texttt{SAGA-opt} (SAGA with optimized probabilities) is the fastest method. On the other hand \texttt{SAGA-Li} (SAGA with $p_i =L_i/\overline{L}$) stalls in Figure~\ref{fig:SAGALi2} and~\ref{fig:SAGALi3} when $n$ is larger, performing even worst as compared to the standard SAGA method with uniform probabilities (\texttt{SAGA-uni}).

    \newcommand{\x}{0.30}
\begin{figure}
\centering
 \begin{subfigure}[t]{\x\textwidth}
        \centering
\includegraphics[width =  \textwidth ]{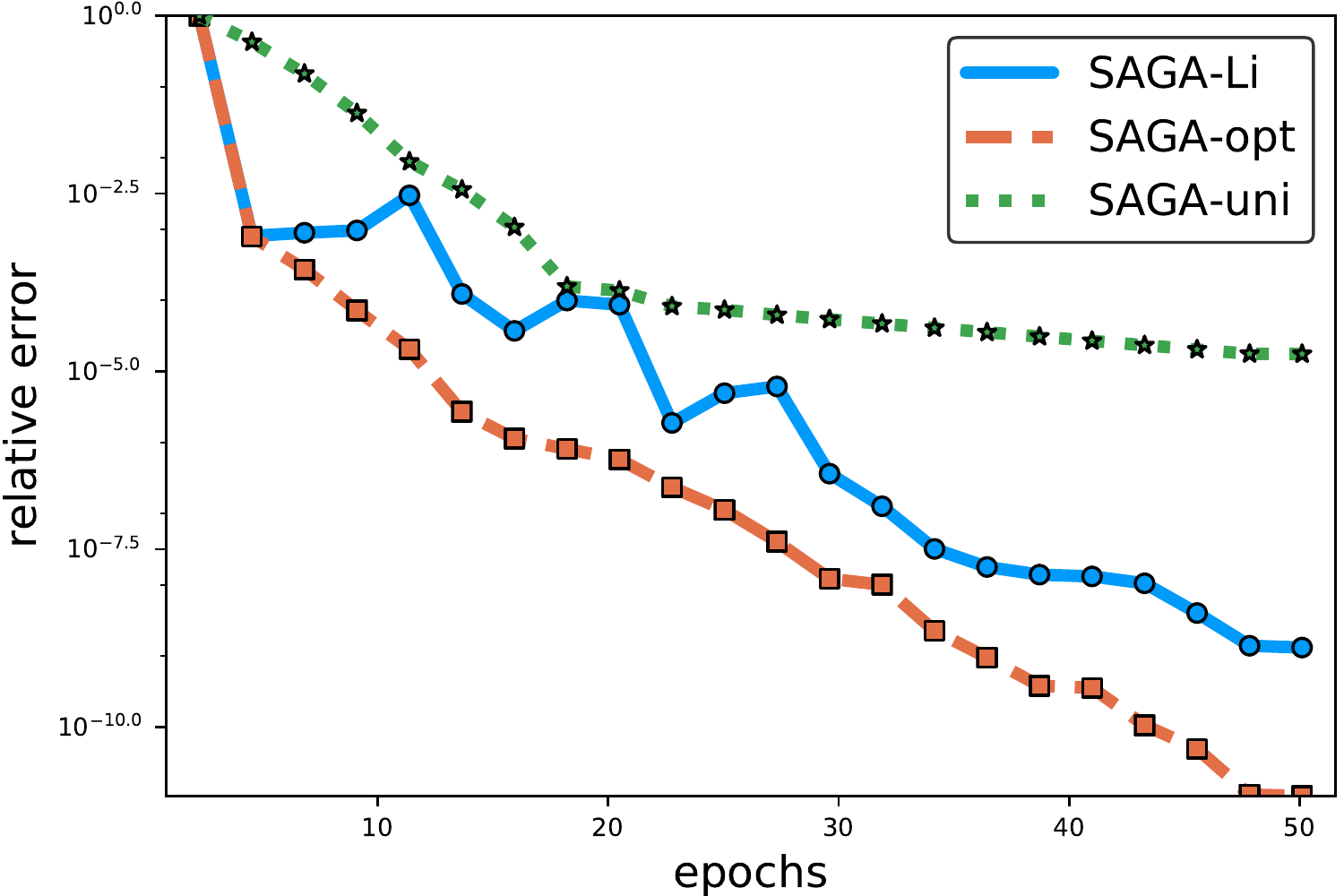}
       \caption{$n =10$ }\label{fig:SAGALi1}
    \end{subfigure}%
         \hspace{0.03\textwidth}
 \begin{subfigure}[t]{\x\textwidth}
        \centering
\includegraphics[width =  \textwidth ]{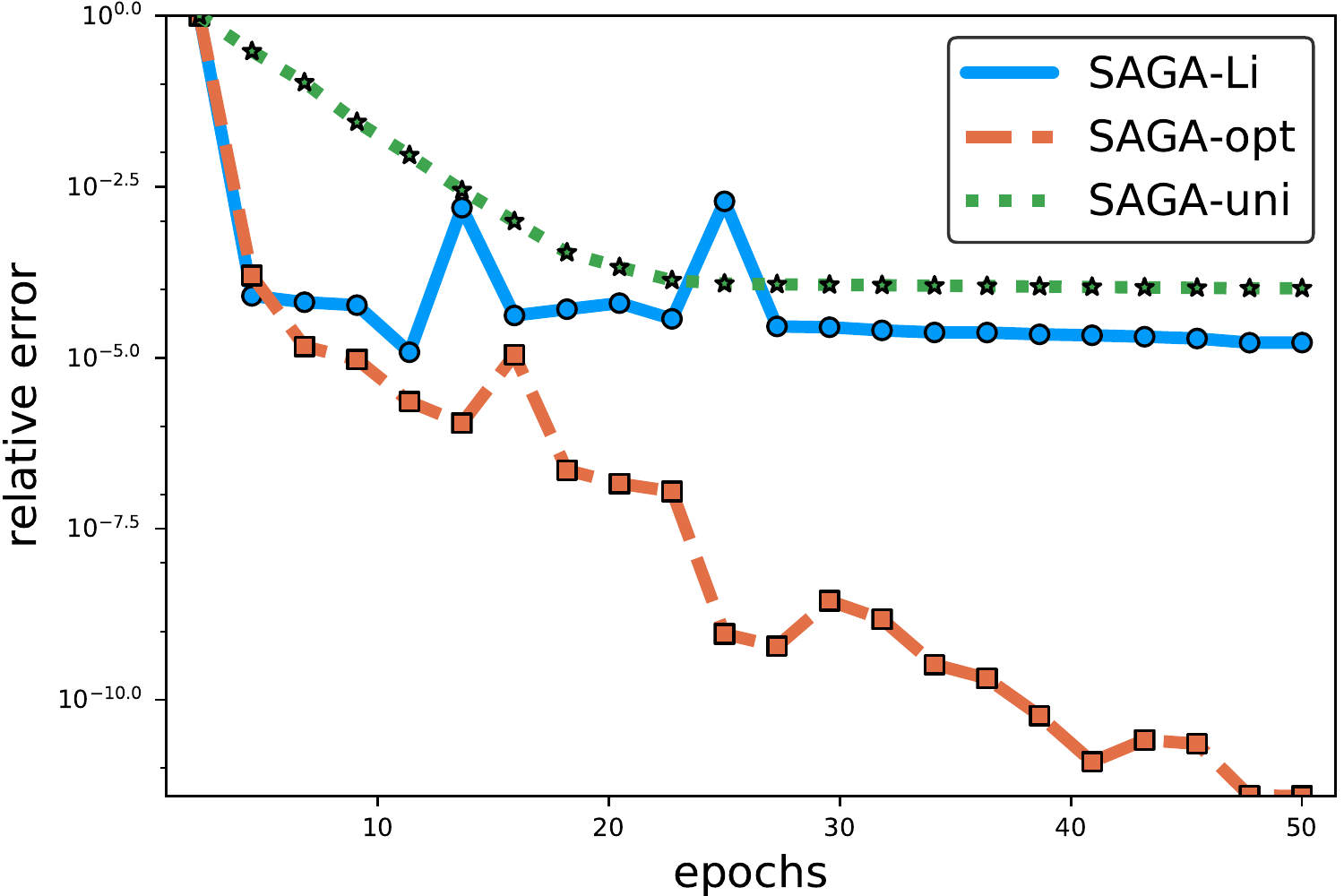}
       \caption{$n =100$ }\label{fig:SAGALi2}
    \end{subfigure}%
     \hspace{0.03\textwidth}
         \begin{subfigure}[t]{\x\textwidth}
                \centering
\includegraphics[width =  \textwidth]{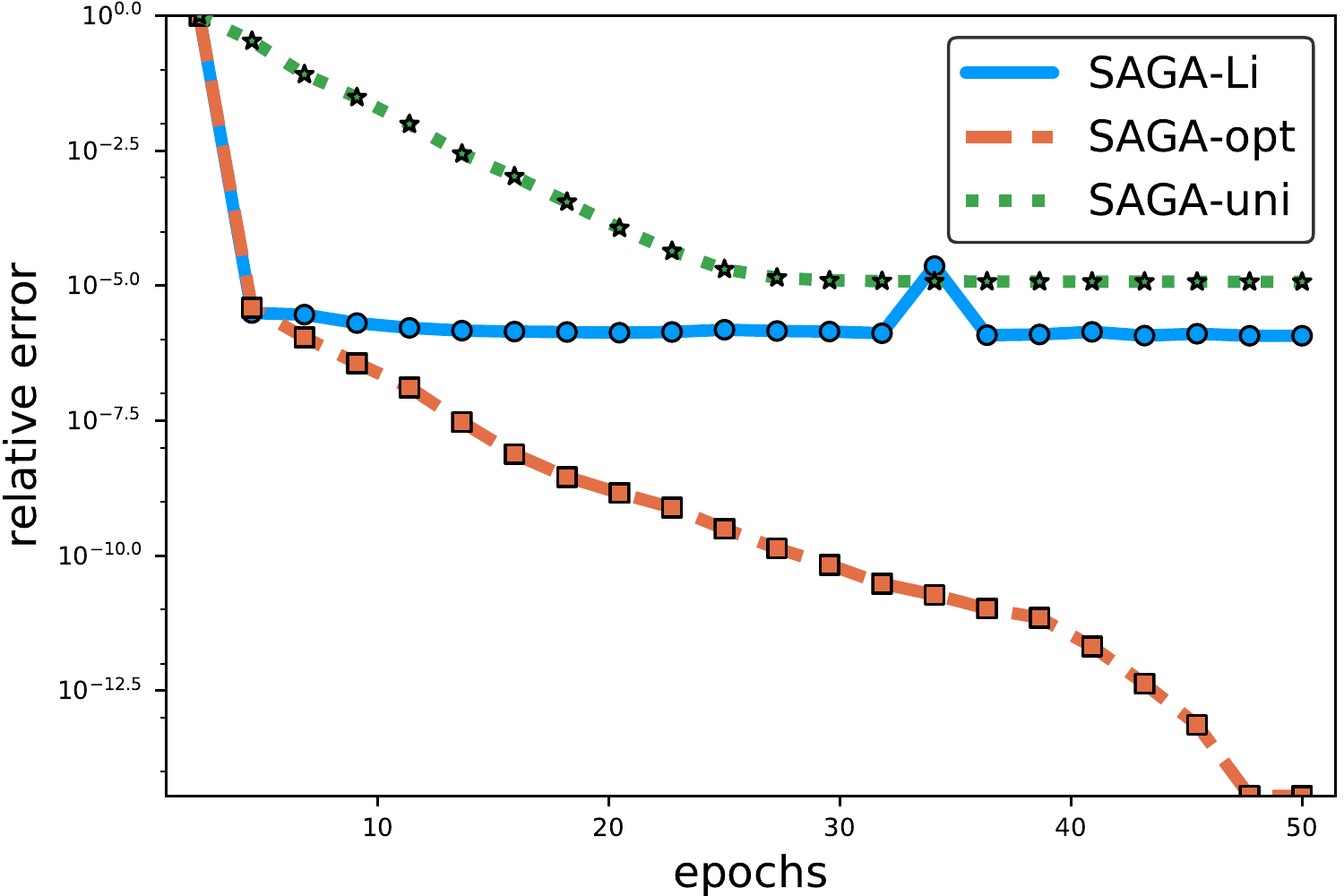}
        \caption{$n =1000$ }\label{fig:SAGALi3}
    \end{subfigure}%
        \caption{Comparing the performance of SAGA with importance sampling based on the optimized probabilities~\eqref{eq:optprobs} (\texttt{SAGA-opt}), $p_i = L_i/\overline{L}$ (\texttt{SAGA-Li}) and $p_i = 1/n$ (\texttt{SAGA-uni}) for an artificially constructed ridge regression problem as $n$ grows.}
\end{figure}

These experiments, together with our theoretical results, leads us to the following observation regarding data pre-processing and data scaling
\begin{remark}
A standard good practice for pre-processing in classification or regression problems is to scale the data so that the standard deviation of each feature equals one. Which in our setting is equivalent to scaling the rows of $\mA\mA^\top$ so that $\norm{\mA_{i:}}_2^2 =1$ for $i =1,\ldots, d.$ In contrast, the iteration complexity of SAGA indicates that one should scale the columns of $\mA \mA^\top$ so that $\norm{\mA_{:j}}_2^2= \norm{a_j}_2^2=1$ for $j =1,\ldots, n.$ Fortunately, both the columns and rows of $\mA \mA^\top$ can be simultaneously scaled using the Sinkhorn algorithm to solve the matrix scaling problem $\mA\mA^\top e = e$ and $\mA^\top \mA e = e.$ 
\end{remark}

\subsection{Optimal mini-batch size}

\label{subsec:optimalminibatch}

\begin{figure}
\centering
 \begin{subfigure}[t]{\x\textwidth}
        \centering
\includegraphics[width =  \textwidth ]{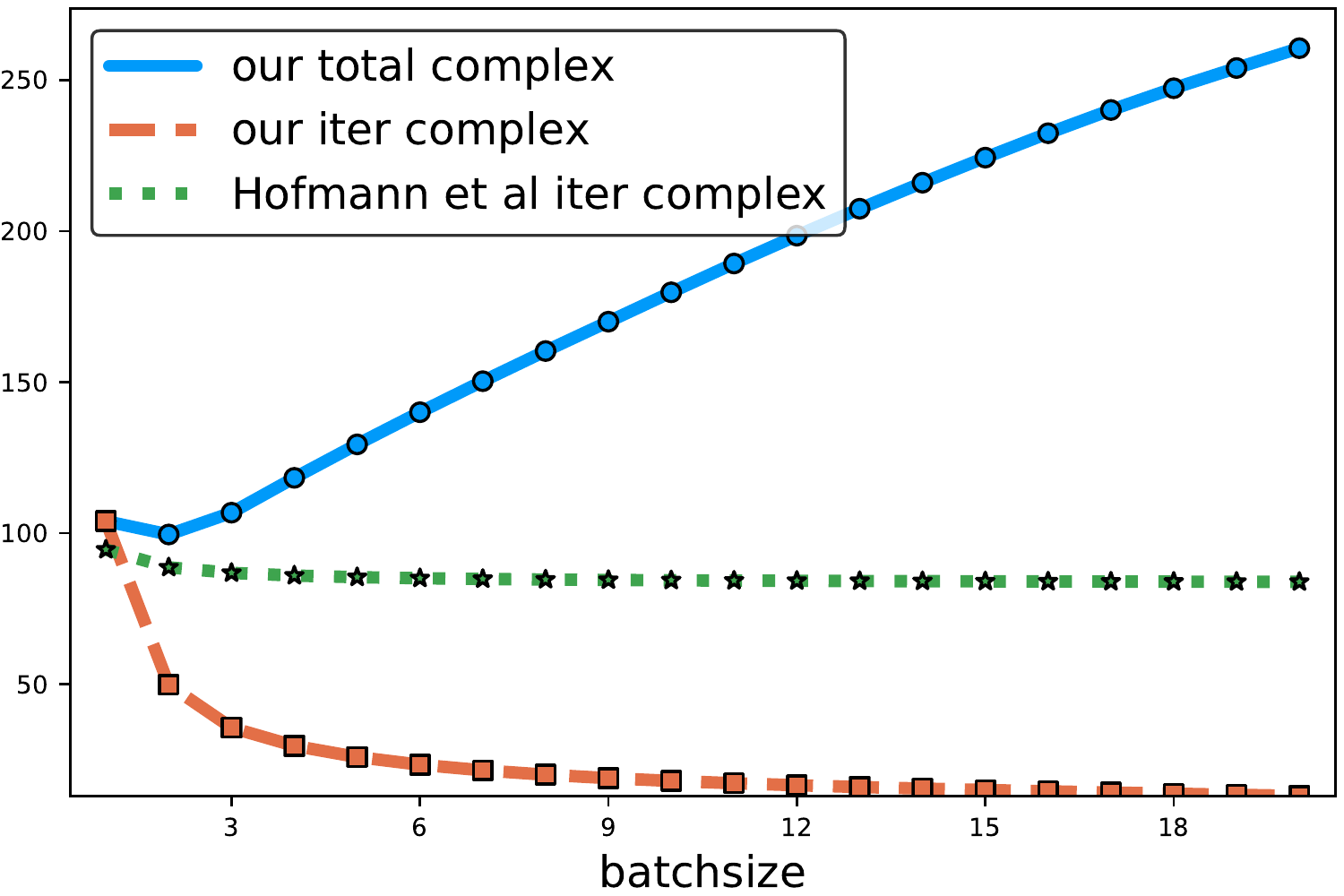}
        \caption{Randomly generated $20 \times 20$ gaussian matrix }\label{fig:comp1}
    \end{subfigure}%
     \hspace{0.03\textwidth}
         \begin{subfigure}[t]{\x\textwidth}
        \centering
\includegraphics[width =  \textwidth ]{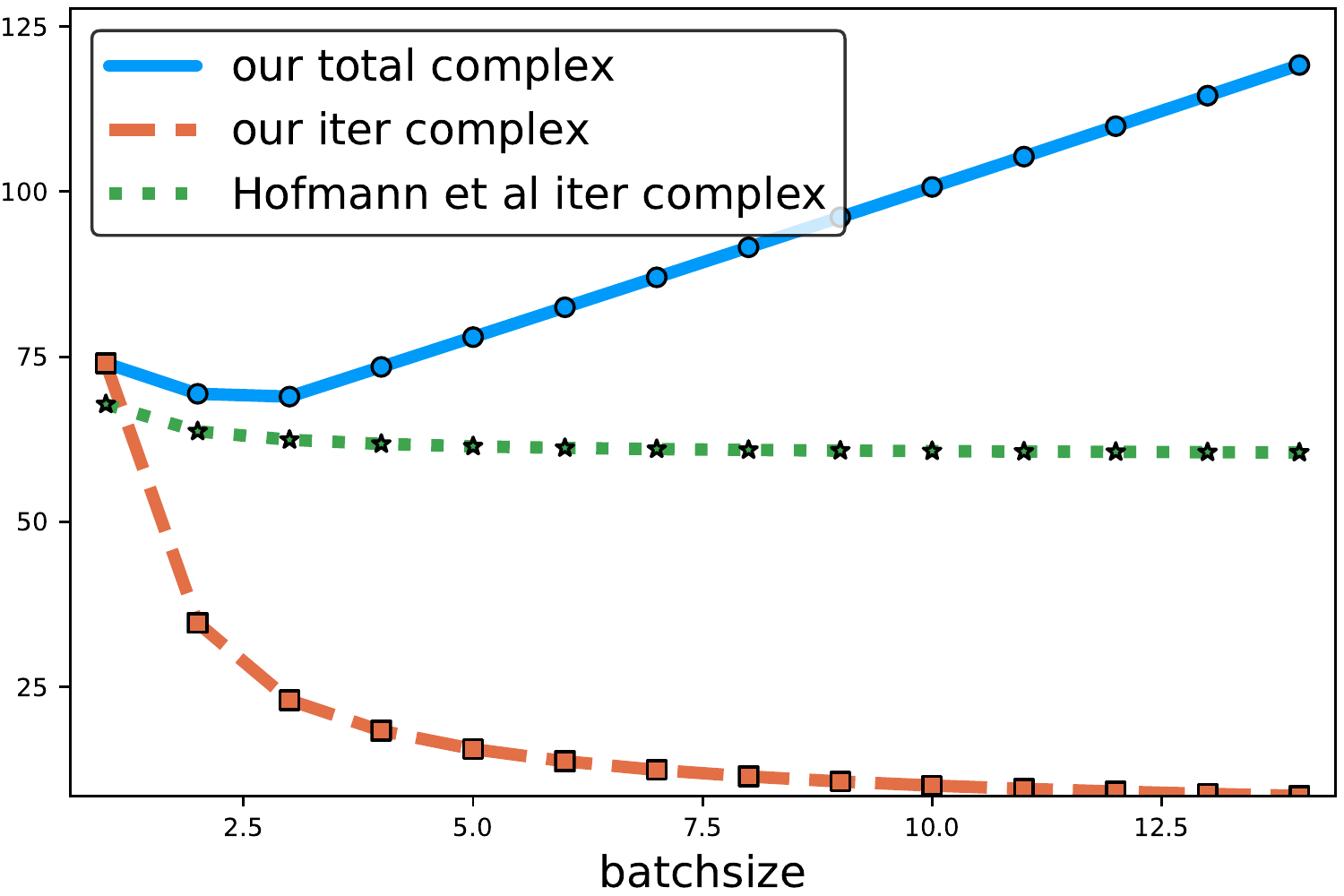}
        \caption{\texttt{australian} problem from LIBSVM~\cite{Chang2011} }\label{fig:comp2}
    \end{subfigure}%
     \hspace{0.03\textwidth}
         \begin{subfigure}[t]{\x\textwidth}
        \centering
\includegraphics[width =  \textwidth ]{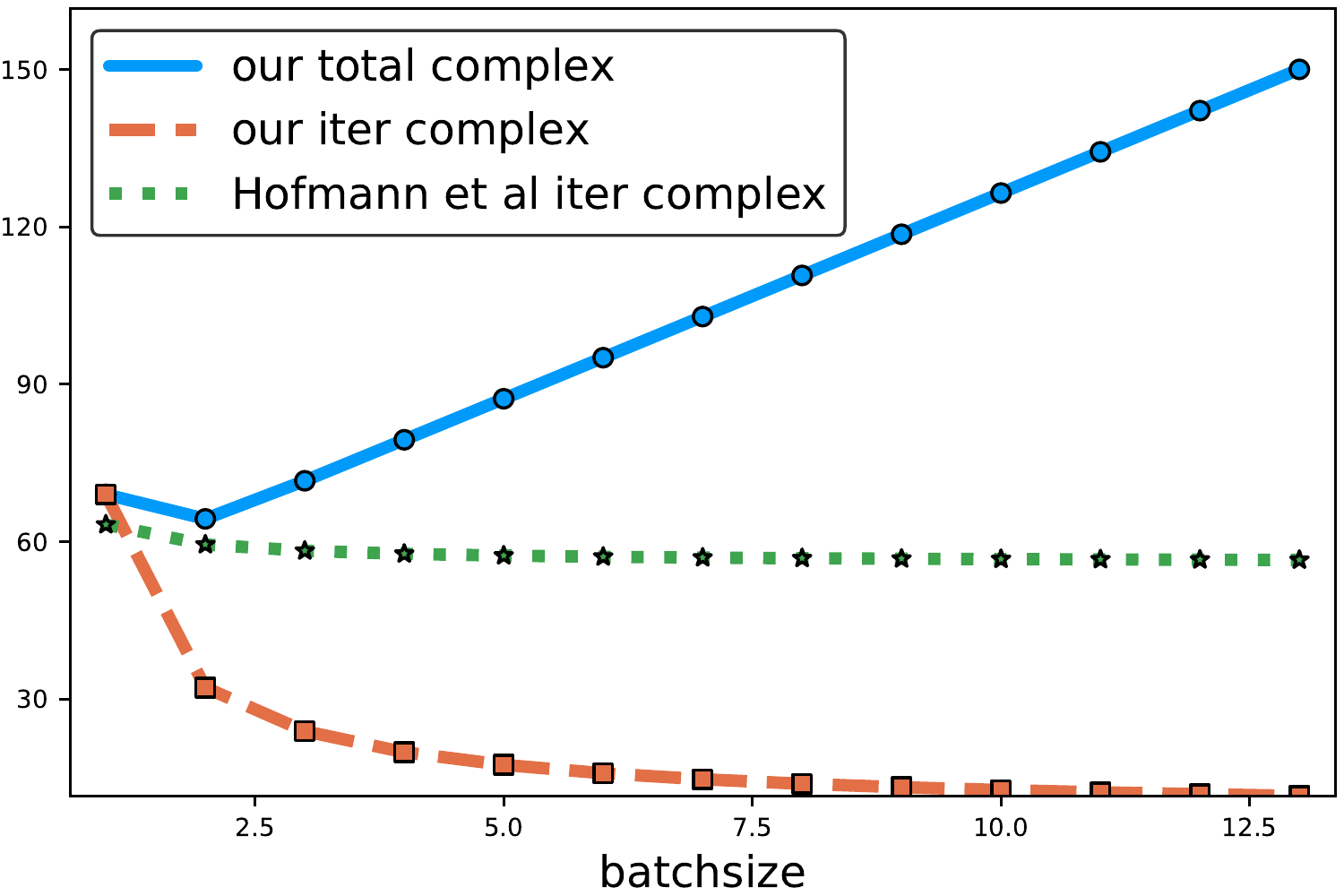}
        \caption{\texttt{heart} problem from LIBSVM~\cite{Chang2011} }\label{fig:comp3}
    \end{subfigure}%
        \caption{The iteration complexity of minibatch SAGA~\eqref{eq:SAGAmini-nice} vs the mini-batch size $\tau$ for two ridge regression problems~\eqref{eq:ridge}. We used $\lambda = L_{\max}/n.$}
\end{figure}

Our analysis of the mini-batch SAGA is precise enough as to inform an optimal mini-batch size. For instance, consider $\tau$--nice sampling and the resulting iteration complexity~\eqref{eq:nh09s8h09sh9JJ}. Theorem~\ref{lem:Lfhatuni} suggests that for any $\tau\in [n]$, the terms within the maximum in~\eqref{eq:nh09s8h09sh9JJ} are bounded by
\begin{eqnarray}
 L_{\max}  \quad \geq & \LGconst &  \geq \quad L \\
L_{\max} +\frac{\mu n}{4} \quad \geq & \displaystyle C(\tau) \,\,\eqdef\,\, \frac{1}{\tau}\frac{n-\tau}{n-1} L_{\max}     +\frac{\mu}{4}\frac{n}{\tau} & \geq  \quad \frac{\mu}{4}.
\end{eqnarray}

Moreover, the upper and lower bounds are realized for $\tau =1$ and $\tau =n$, respectively. Consequently, for $\tau$ small, we have  $\LGconst \geq C(\tau)$. On the other hand, for $\tau$ large we have  $\LGconst \leq C(\tau).$ Furthermore, $ C(\tau)$ decreases super-linearly in $\tau$ while $\LGconst$ tends to decrease more modestly. Consequently, the point where 
$\LGconst$ overtakes $C(\tau)$ is often the best for the overall complexity of the method. To better appreciate these observations,   we plot the  evolution of the iteration complexity~\eqref{eq:nh09s8h09sh9JJ}, the total complexity and the iteration complexity as predicted by Hofmann et al.~\cite{HofmannLLM152015} (see~\eqref{eq:hofcomplex}) as $\tau$ increases in Figures~\ref{fig:comp1}, \ref{fig:comp2} and~\ref{fig:comp3} for three different linear least squares problems. Since each step of mini-batch SAGA computes $\tau$ stochastic gradients, s the total complexity is $\tau$ times the iteration complexity. In each figure we can see that our iteration complexity initially decreases super-linearly, then at some point the complexity is dominated by  $\LGconst$ and the iteration complexity decreases sublinearly. Up to this point we can observe an improvement in overall total complexity. This is in contrast to the iteration complexity given by Hofmann et al. that shows practically no improvement as $\tau$ increases.

 Though our analysis predicts only modest improvements in total complexity, and suggests that $\tau =2$ or $\tau =3$ is optimal, we must bear in mind that this corresponds to $10\%$ and $20\%$ of the data for these small dimensional problems. We conjecture that for larger  problems, this improvement in total complexity will also be larger. 
 
 \begin{figure}
\centering
\begin{subfigure}[t]{\textwidth}
        \centering
\includegraphics[width = 0.4\textwidth ]{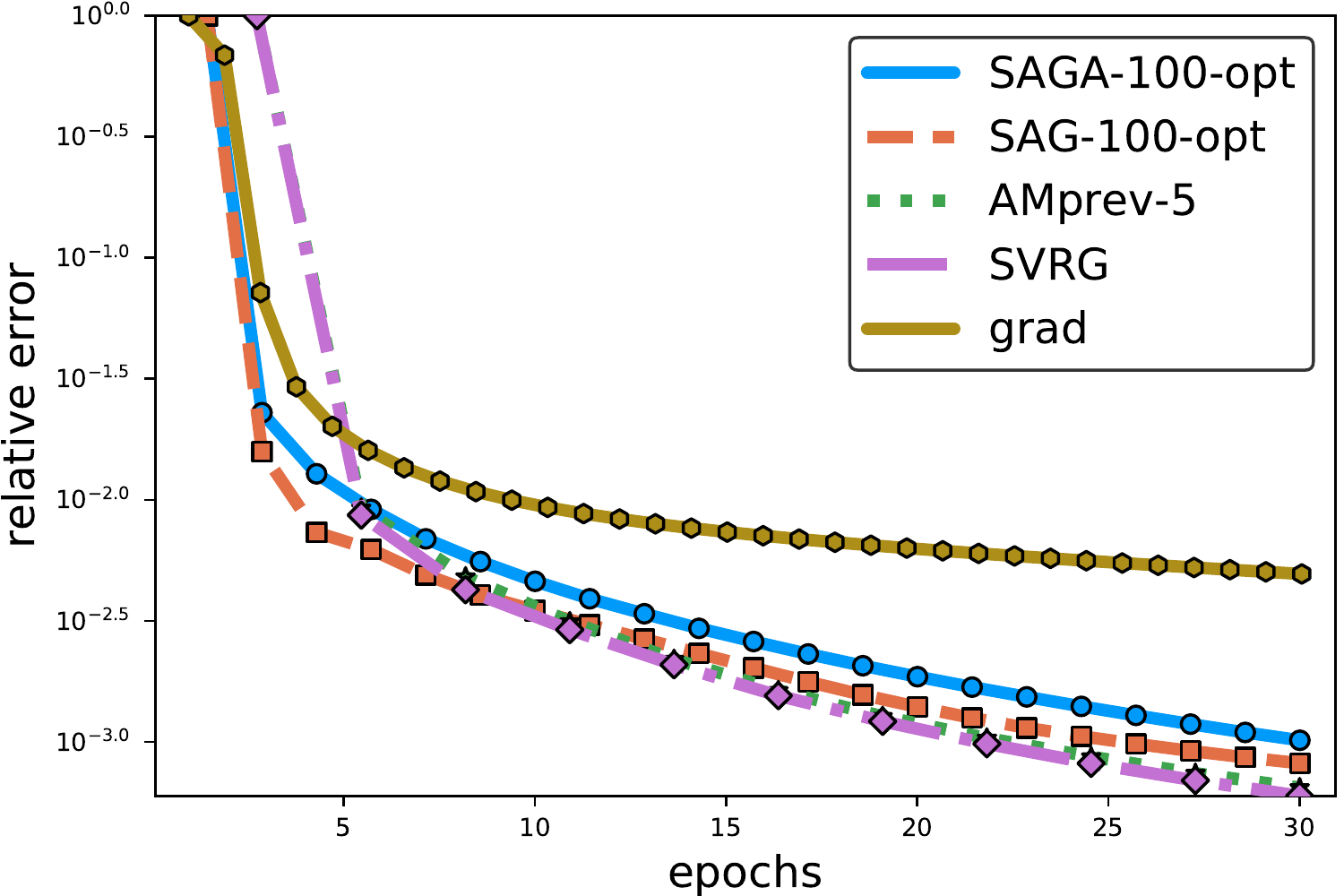}
\hspace{0.03\textwidth}
\includegraphics[width =  0.4\textwidth ]{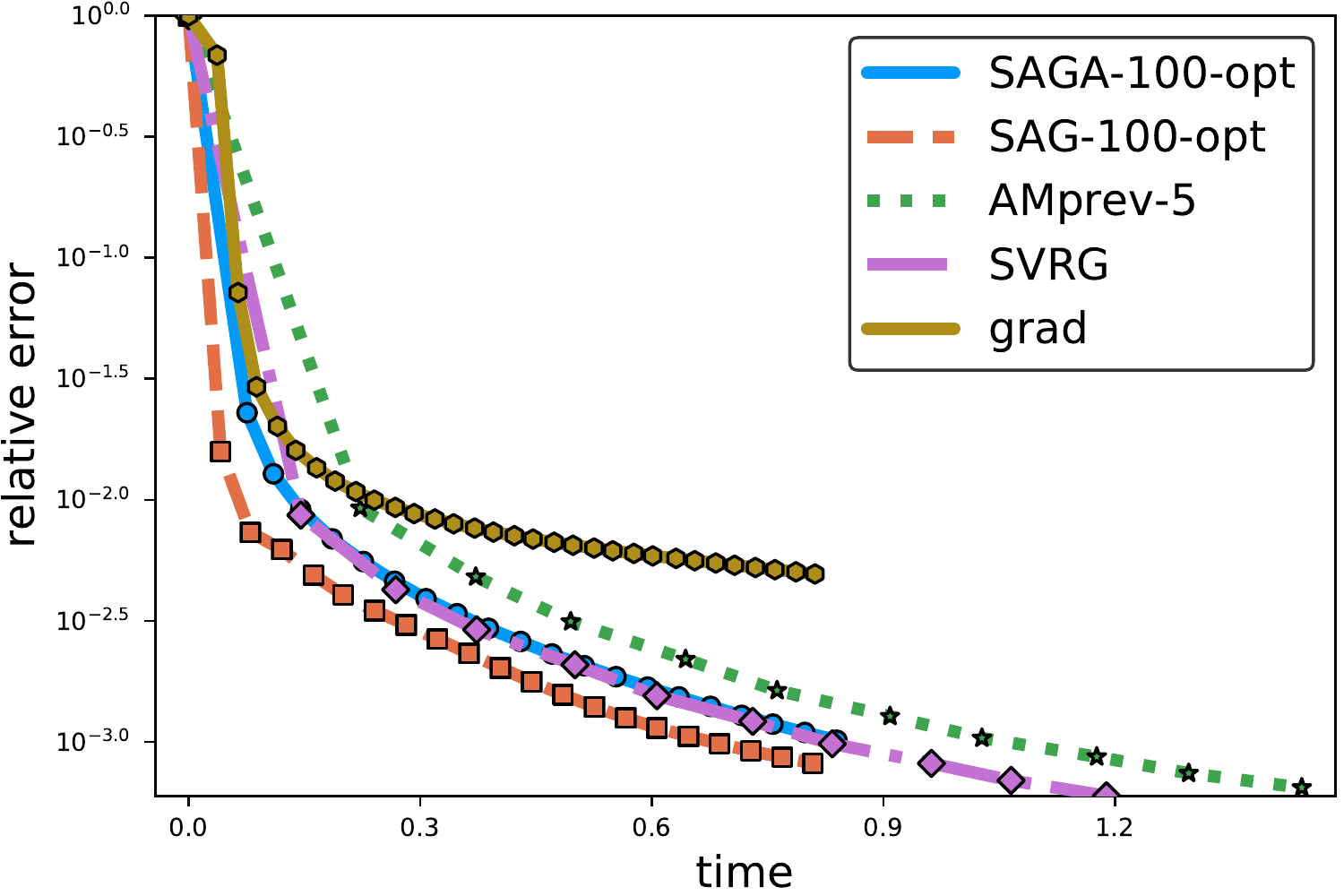}
        \caption{ \texttt{mushrooms} }\label{fig:comp1}
\end{subfigure}\\
\begin{subfigure}[t]{\textwidth}
        \centering
\includegraphics[width =  0.4\textwidth ]{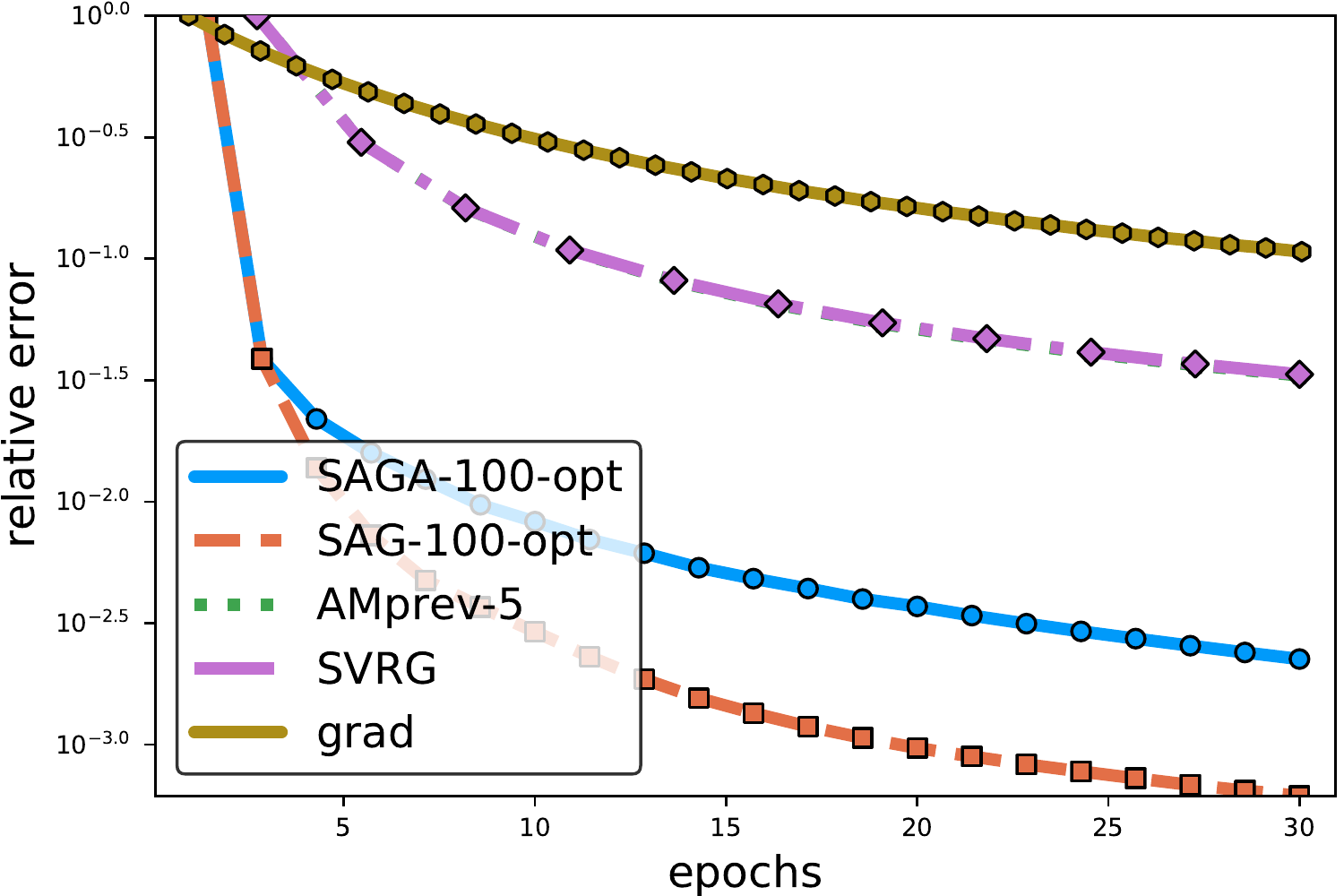}
\hspace{0.03\textwidth}
\includegraphics[width =  0.4\textwidth ]{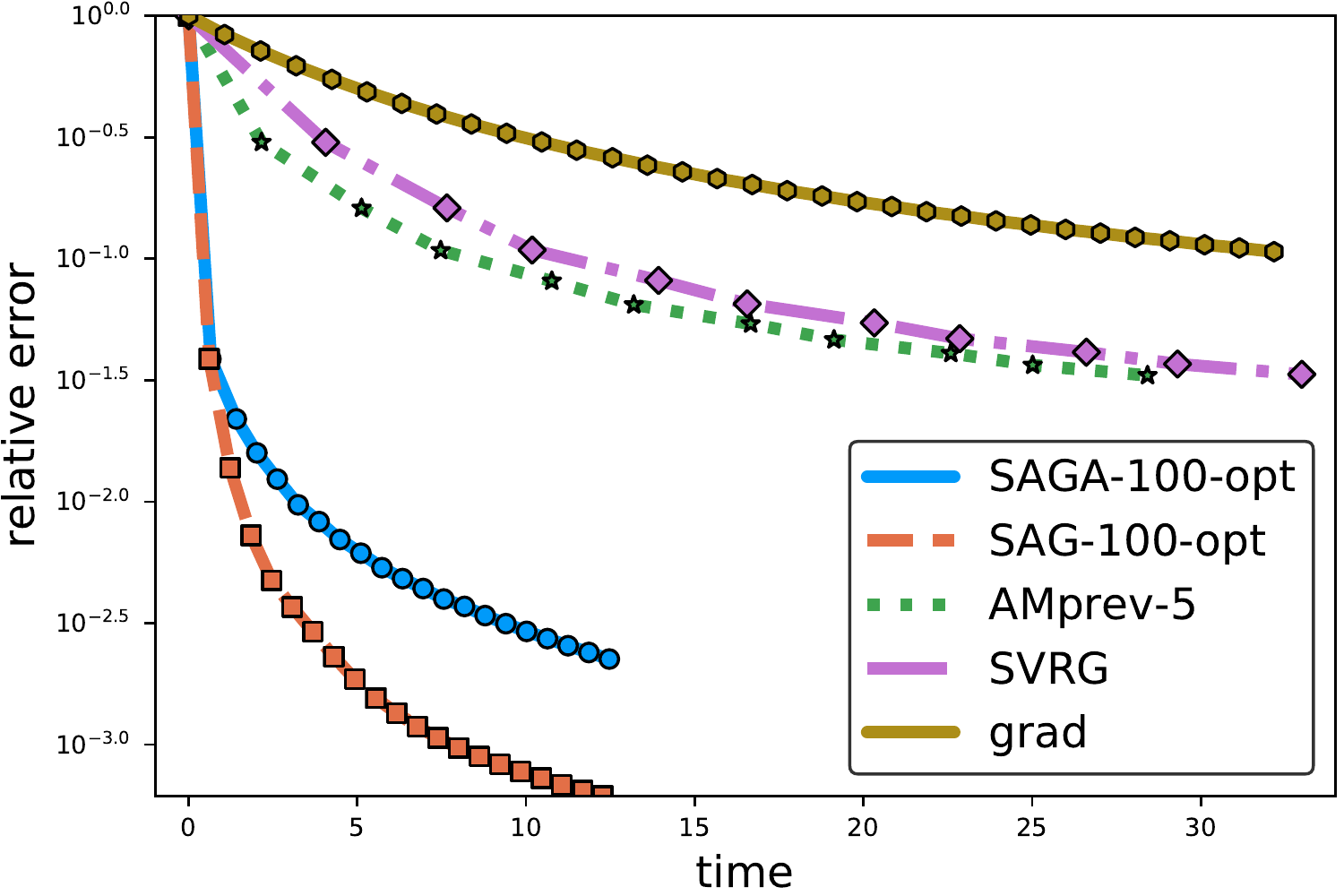}
        \caption{ \texttt{w8a} }\label{fig:comp2}
\end{subfigure}\\
\begin{subfigure}[t]{\textwidth}
        \centering
\includegraphics[width =  0.4\textwidth ]{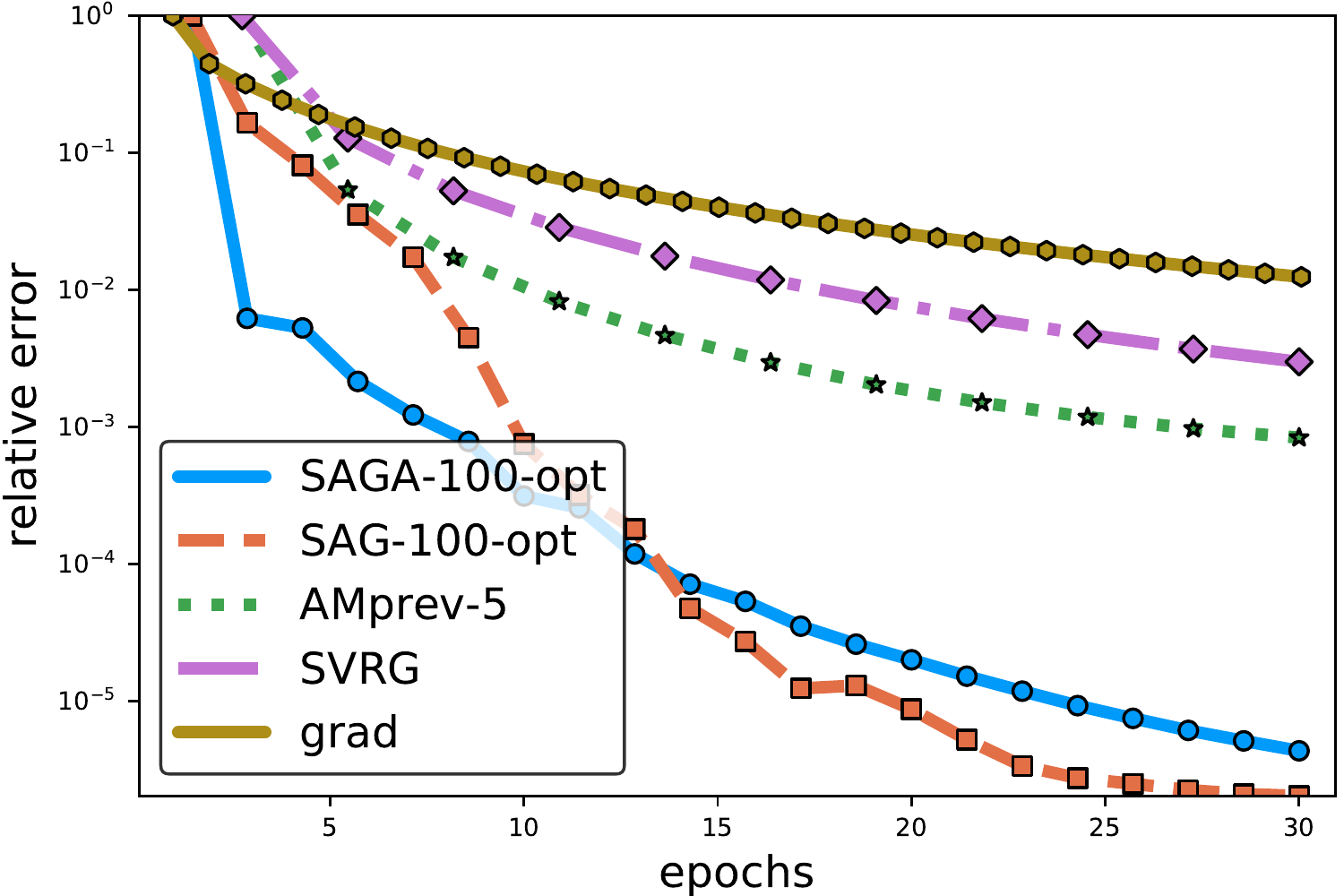}
\hspace{0.03\textwidth}
\includegraphics[width =  0.4\textwidth ]{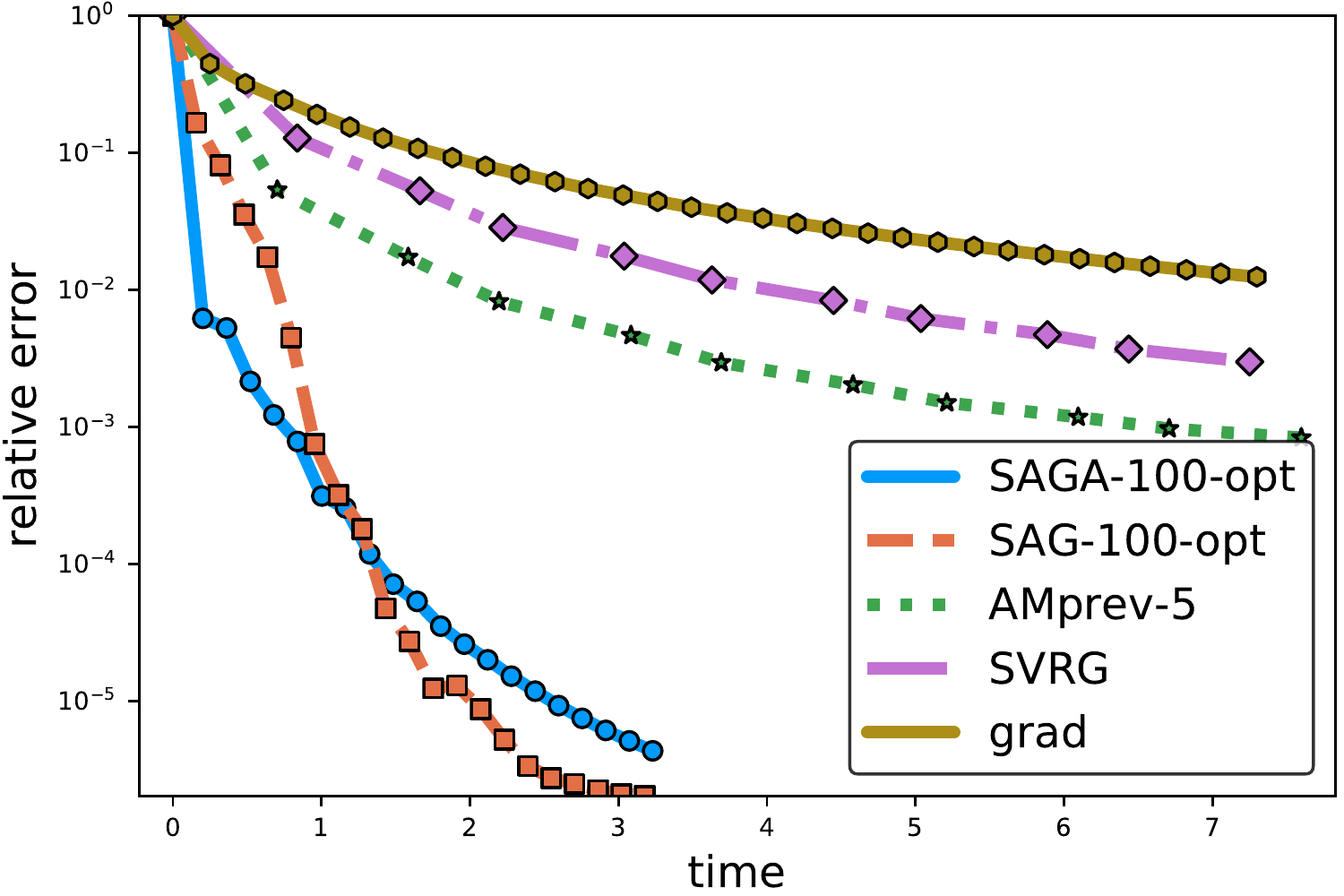}
        \caption{\texttt{a9a}   }\label{fig:comp3}
\end{subfigure}%
        \caption{Comparison of the methods on logistic regression problems~\eqref{eq:logistic} with data taken from LIBSVM~\cite{Chang2011}.}
\end{figure}
 
To use these insights in practice, we need to be able to efficiently determine the $\tau$ which corresponds  to the point at which the convergence regimes switches from being dominated by $C(\tau)$ to being dominated by $\LGconst$.
 This surmounts to choosing $\tau$ so that
\[  \LGconst  =
 \frac{1}{\tau}\frac{n-\tau}{n-1} L_{\max}     +\frac{\mu}{4}\frac{n}{\tau}. \]
Estimating $L_{\max}$ and $\mu$ is often possible, but  the cost of computing $\LGconst$ has a combinatorial dependency on  $n$ and $\tau.$ Thus to have a practical way of choosing $\tau$, we first need to bound  $\LGconst$. This can be done for losses with linear classifiers using concentration bounds. We leave this for future work.

\subsection{Comparative experiments}

We now compare the performance of \texttt{SAGA-opt} to several known methods such as \texttt{SVRG}~\cite{Johnson2013}, \texttt{grad} (gradient descent with fixed stepsizes) and \texttt{AMprev} (an improved version of \texttt{SVRG} that uses second order information)~\cite{bach-roux-gower2018}.  For the stepsize of \texttt{SAGA-opt} and \texttt{SAG-opt}, we found the stepsize $\alpha  \leq \frac{1}{n\mu + 4\bar{L}} $  given by theory to be a bit too conservative. Instead do we away with the $4$ and used $\alpha = \frac{1}{n\mu + \bar{L}} $ instead. For the remaining methods we used a grid search over $L_{\max} \times  2^m$ for $m = 21, 19, 17, \ldots, -10,-11.$

To illustrate how biased gradient estimates can perform well in practice (despite lack of proper theoretical understanding of these methods), we also test \texttt{SAG-opt}: a method that  uses the same Jacobian updates as \texttt{SAGA-opt},  but instead uses the biased gradient estimate $g^k = \frac{1}{n}\mJ^{k+1} \ones$. See Section~\ref{sec:sketchproject} for more details on biased gradient estimates.

In Figures~\ref{fig:comp1}, \ref{fig:comp2} and~\ref{fig:comp3} we compare the methods on three logistic regression problems~\eqref{eq:logistic} based on three different data sets taken from LIBSVM~\cite{Chang2011}. In all these problems the two methods with optimized non-uniform sampling \texttt{SAG-opt} and \texttt{SAGA-opt} were faster in terms of both epochs and time. The next best method was \texttt{AM-prev}, followed by \texttt{SVRG} and \texttt{grad}. It is interesting to see how well \texttt{SAG-opt} performs in practice, despite having biased gradient estimates. This is why we believe it is important to advance the analyse of biased gradient estimates as future work.

\section{Conclusion} \label{sec:conclusion}

We now provide a brief summary of some of the key contributions of this paper and a few selected pointers to possible future research directions.

\subsection{Summary of key contributions}

We developed and analyzed JacSketch---a novel family of variance reduced methods based on Jacobian sketching---and provided a link between variance reduction for empirical risk minimization and  recent results from the field of randomized numerical linear algebra on sketch-and-project type methods for solving linear systems. In particular, it turns out that variance reduction is obtained by taking an  SGD step on a stochastic optimization problem whose solution is the unknown Jacobian.  As a consequence of our analysis, we resolved the conjecture of \cite{Schmidtnonuni}  in the affirmative by proving a properly designed importance sampling for SAGA leading to the iteration complexity of $\cO(n + \tfrac{\bar{L}}{\mu}  ) \log\left(\frac{1}{\epsilon} \right)$. For this purpose we developed a new proof technique using a {\em stochastic Lyapunov function}. Our complexity result for uniform  mini-batch SAGA perfectly interpolates between the best known convergence rates of SAGA and gradient descent, and is  sufficiently precise as to inform the choice of the batch size that minimizes the over all complexity of the method. Additionally we design and analyse a 
 {\em reduced memory} variant of SAGA as a special case.
%

\subsection{Future work}

For future work we see many possible avenues including the following.

\paragraph{Structured sparse weight matrices.} One may wish to explore combinations of a weight matrix and different sketches to  design new efficient methods further improving iteration complexity. For this the weighting matrix will have to be highly structured (e.g., block diagonal or very sparse) so that the Jacobian update~\eqref{eq:jacobsol} can be  computed efficiently.


\paragraph{Bias-variance trade-off.} One can try to explore the bias-variance trade-off as opposed to merely focus on the extremes only: SAG (minimum variance) and SAGA (no bias). There is also no empirical evidence that unbiased estimators outperform the biased ones.

\paragraph{Johnson-Lindenstrauss sketches.}
One can design completely new methods using different sparse sketches, such as the fast Johnson-Lindenstrauss transform~\cite{Ailon:2009} or the Achlioptas transform~\cite{Achlioptas:2003}. The resulting method can then be analyzed through Theorem~\ref{theo:convgen}. But first these sketches need to be adapted to ensure we get an efficient method. In particular, computing $\Jac(x) \mS$ is only efficient if $\mS$ is row sparse, i.e., most of the rows of $\mS$ contain zeros only.

{\printbibliography}

\appendix 

\section{Proof of Inequality \eqref{eq:inew_intro}}

\begin{lemma} \label{lem:two_ineq} Let $S$ be a sampling whose support $\cG=\support(S)$ is  a partition of $[n]$. Moreover, assume all sets of this partition have cardinality $\tau$. Then
\[\frac{1}{|\cG|}\sum_{C\in \cG} L_C \quad \leq  \quad \bar{L} \quad \leq \quad \max_{C\in \cG} \frac{1}{\tau} \sum_{i\in C} L_i.\]
\end{lemma}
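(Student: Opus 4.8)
The plan is to exploit two elementary facts recorded earlier in the paper: that $L_C \leq \tfrac{1}{|C|}\sum_{i\in C} L_i$ for every nonempty $C$, and that, since $\cG = \support(S)$ is a partition of $[n]$ into sets of size $\tau$, we have $|\cG| = n/\tau$ and every $i\in[n]$ lies in exactly one $C\in\cG$. Recall also that $\bar L = \tfrac1n\sum_{i=1}^n L_i$.

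First I would prove the left inequality. Using $L_C \leq \tfrac1\tau\sum_{i\in C}L_i$ and summing over $C\in\cG$, the partition property gives
\[
\sum_{C\in\cG} L_C \;\leq\; \frac1\tau\sum_{C\in\cG}\sum_{i\in C}L_i \;=\; \frac1\tau\sum_{i=1}^n L_i .
\]
Dividing both sides by $|\cG| = n/\tau$ yields $\tfrac{1}{|\cG|}\sum_{C\in\cG}L_C \leq \tfrac1n\sum_{i=1}^n L_i = \bar L$.

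For the right inequality, I would rewrite $\bar L$ as an average over $\cG$ of the per-block averages. Again using the partition property,
\[
\bar L \;=\; \frac1n\sum_{i=1}^n L_i \;=\; \frac1n\sum_{C\in\cG}\sum_{i\in C}L_i \;=\; \frac{\tau}{n}\sum_{C\in\cG}\Bigl(\frac1\tau\sum_{i\in C}L_i\Bigr) \;=\; \frac{1}{|\cG|}\sum_{C\in\cG}\Bigl(\frac1\tau\sum_{i\in C}L_i\Bigr),
\]
where the last step uses $\tau/n = 1/|\cG|$. Since this is a convex combination (uniform average) of the numbers $\tfrac1\tau\sum_{i\in C}L_i$ over $C\in\cG$, it is bounded above by their maximum, giving $\bar L \leq \max_{C\in\cG}\tfrac1\tau\sum_{i\in C}L_i$.

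There is no real obstacle here; the only things to be careful about are invoking the already-established bound $L_C \leq \tfrac{1}{|C|}\sum_{i\in C}L_i$ and the identity $|\cG| = n/\tau$ (which follows immediately from $\cG$ being a partition into equal blocks of size $\tau$, or alternatively from Lemma~\ref{lem:hi8s8bvf786sfvs} with $c_1 = 1$). Both inequalities then reduce to the statement that a sum of block-wise upper bounds, suitably normalized, controls the global average.
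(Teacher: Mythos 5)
Your proposal is correct and follows essentially the same route as the paper's proof: the left inequality comes from $L_C \leq \tfrac{1}{\tau}\sum_{i\in C} L_i$ summed over the partition together with $|\cG| = n/\tau$, and the right inequality from writing $\bar{L}$ as the uniform average over $\cG$ of the block averages $\tfrac{1}{\tau}\sum_{i\in C} L_i$ and bounding by the maximum. No gaps.
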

\begin{proof}
 By assumption, $|\cG|= \tfrac{n}{\tau}$. The first inequality follows from
$
\sum_{C \in \cG} L_C \leq  \sum_{C\in \cG} \frac{1}{\tau} \sum_{i\in C} L_i
 = \frac{1}{\tau} \sum_{i=1}^n L_i  = \frac{n}{\tau} \bar{L}. 
$
On the other hand,
\[\bar{L} = \frac{1}{n} \sum_{i=1}^n L_i = \frac{1}{n} \sum_{C\in \cG} \sum_{i\in C} L_i = \frac{1}{|\cG|} \sum_{C\in \cG} \frac{1}{\tau } \sum_{i\in C} L_i \leq \max_{C\in \cG}\frac{1}{\tau } \sum_{i\in C} L_i.\]
\hfill \qed
\end{proof}

\newpage
\section{Duality of Sketch-and-Project and Constrain-and-Approximate} 

\begin{lemma}\label{lem:sol}
Let $\mJ^k,\Jac \in \R^{d \times n}$ and $\mS \in \R^{n \times \tau}.$
The sketch-and-project problem
\begin{equation}\label{eq:jacupdateFW} \mJ^{k+1}= \arg\min_{\mJ\in \R^{d \times n}} \frac{1}{2}\norm{\mJ-\mJ^{k}}_{\mW^{-1}}^2 \quad \mbox{subject to} \quad  \Jac \mS = \mJ \mS,\end{equation}
and the constrain-and-approximate problem
\begin{equation}\label{eq:jacupdatedual} \mJ^{k+1}=  \underset{\mJ \in \R^{d \times n}}{\arg} \underset{\mY\in \R^{d \times \tau}}{\min}  \frac{1}{2}\norm{\mJ-\Jac}_{\mW^{-1}}^2 \quad \mbox{subject to} \quad  \mJ = \mJ^{k}+ \mY \mS^\top \mW, \end{equation}
have the same solution, given by:
\begin{equation}\label{eq:jacobsolW}
\mJ^{k+1} = \mJ^{k} -(\mJ^{k}-\Jac) \mS (\mS^\top \mW \mS)^{\dagger} \mS^\top \mW.
\end{equation}
\end{lemma}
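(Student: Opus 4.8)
This is a standard least-norm / projection duality result, so the proof should be a direct calculation. Let me sketch the plan.

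First, I'd solve the sketch-and-project problem directly using Lagrange multipliers. The constraint $\mathbf{J}\mathbf{S} = \mathbf{\nabla F}\mathbf{S}$ is linear, and the objective is a strictly convex quadratic in the $\mathbf{W}^{-1}$ norm. The Lagrangian is $\frac{1}{2}\|\mathbf{J}-\mathbf{J}^k\|_{\mathbf{W}^{-1}}^2 - \langle \mathbf{\Lambda}, (\mathbf{J}-\mathbf{\nabla F})\mathbf{S}\rangle$ for a multiplier $\mathbf{\Lambda}\in\mathbb{R}^{d\times\tau}$. Setting the derivative in $\mathbf{J}$ to zero gives $(\mathbf{J}-\mathbf{J}^k)\mathbf{W}^{-1} = \mathbf{\Lambda}\mathbf{S}^\top$, i.e. $\mathbf{J} = \mathbf{J}^k + \mathbf{\Lambda}\mathbf{S}^\top\mathbf{W}$. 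This already has the form appearing in the constrain-and-approximate problem with $\mathbf{Y} = \mathbf{\Lambda}$, which is a clean way to see the duality. Substituting into the constraint $\mathbf{J}\mathbf{S} = \mathbf{\nabla F}\mathbf{S}$ yields $\mathbf{J}^k\mathbf{S} + \mathbf{\Lambda}\mathbf{S}^\top\mathbf{W}\mathbf{S} = \mathbf{\nabla F}\mathbf{S}$, so $\mathbf{\Lambda}(\mathbf{S}^\top\mathbf{W}\mathbf{S}) = (\mathbf{\nabla F}-\mathbf{J}^k)\mathbf{S}$.

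The main (mild) obstacle is that $\mathbf{S}^\top\mathbf{W}\mathbf{S}$ need not be invertible when $\mathbf{S}$ does not have full column rank, so I cannot simply invert it — I must use the pseudoinverse and justify that the resulting $\mathbf{\Lambda} = (\mathbf{\nabla F}-\mathbf{J}^k)\mathbf{S}(\mathbf{S}^\top\mathbf{W}\mathbf{S})^\dagger$ indeed solves $\mathbf{\Lambda}(\mathbf{S}^\top\mathbf{W}\mathbf{S}) = (\mathbf{\nabla F}-\mathbf{J}^k)\mathbf{S}$. This is where I'd invoke the standard fact that $\mathrm{Range}((\mathbf{\nabla F}-\mathbf{J}^k)\mathbf{S})^\top = \mathrm{Range}(\mathbf{S}^\top) \supseteq \mathrm{Range}(\mathbf{S}^\top\mathbf{W}\mathbf{S})$... more precisely, one uses $\mathbf{M}\mathbf{S}(\mathbf{S}^\top\mathbf{W}\mathbf{S})^\dagger(\mathbf{S}^\top\mathbf{W}\mathbf{S}) = \mathbf{M}\mathbf{S}$ for any $\mathbf{M}$, which holds because $(\mathbf{S}^\top\mathbf{W}\mathbf{S})^\dagger(\mathbf{S}^\top\mathbf{W}\mathbf{S})$ is the orthogonal projector onto $\mathrm{Range}(\mathbf{S}^\top\mathbf{W}\mathbf{S}) = \mathrm{Range}(\mathbf{S}^\top\mathbf{W}^{1/2}) = \mathrm{Range}(\mathbf{S}^\top)$ (since $\mathbf{W}\succ 0$), and $\mathbf{M}\mathbf{S}$ has rows in $\mathrm{Range}(\mathbf{S}^\top)$. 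Plugging this $\mathbf{\Lambda}$ back into $\mathbf{J} = \mathbf{J}^k + \mathbf{\Lambda}\mathbf{S}^\top\mathbf{W}$ gives exactly \eqref{eq:jacobsolW}. Feasibility of this $\mathbf{J}^{k+1}$ for \eqref{eq:jacupdateFW} follows from the same projector identity, and convexity guarantees the stationary point is the unique minimizer.

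For the constrain-and-approximate problem \eqref{eq:jacupdatedual}: substitute the constraint $\mathbf{J} = \mathbf{J}^k + \mathbf{Y}\mathbf{S}^\top\mathbf{W}$ into the objective to get an unconstrained quadratic in $\mathbf{Y}$, namely $\frac{1}{2}\|\mathbf{J}^k - \mathbf{\nabla F} + \mathbf{Y}\mathbf{S}^\top\mathbf{W}\|_{\mathbf{W}^{-1}}^2$; minimize over $\mathbf{Y}$ by setting the gradient to zero, which gives $(\mathbf{J}^k-\mathbf{\nabla F})\mathbf{S} + \mathbf{Y}\mathbf{S}^\top\mathbf{W}\mathbf{S} = 0$ — the same normal equations as before — hence the same solution. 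Alternatively, and more slickly, I'd just observe that the first-order optimality system I derived for \eqref{eq:jacupdateFW} (stationarity plus the feasibility constraint) is literally the pair of conditions defining a feasible-plus-optimal point of \eqref{eq:jacupdatedual} (with $\mathbf{Y}\leftrightarrow\mathbf{\Lambda}$), so the two problems share the solution \eqref{eq:jacobsolW}; I expect to present it this way to keep the argument short. Finally I note that in the paper's notation \eqref{eq:jacobsolW} reads $\mathbf{J}^{k+1} = \mathbf{J}^k - (\mathbf{J}^k - \mathbf{\nabla F})\mathbf{\Pi}_{\mathbf{S}}$ with $\mathbf{\Pi}_{\mathbf{S}} = \mathbf{S}(\mathbf{S}^\top\mathbf{W}\mathbf{S})^\dagger\mathbf{S}^\top\mathbf{W}$ as in \eqref{eq:PSdef}, matching \eqref{eq:jacobsolWintro}.
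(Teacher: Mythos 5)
Your proposal is correct, but it reaches the result by a somewhat different route than the paper. For the closed form, the paper changes variables via $\mZ = (\mJ-\mJ^{k})\mW^{-1/2}$ to reduce \eqref{eq:jacupdateFW} to a least-norm problem, invokes the pseudoinverse characterization of least-norm solutions, and then applies the identity $\mA^{\dagger} = (\mA^\top \mA)^{\dagger}\mA^\top$ with $\mA = \mW^{1/2}\mS$; you instead work directly with the KKT system, obtaining the normal equation $\mY \, \mS^\top \mW \mS = (\Jac - \mJ^{k})\mS$ and verifying that the pseudoinverse candidate solves it through the projector identity $\mM\mS(\mS^\top\mW\mS)^{\dagger}(\mS^\top\mW\mS)=\mM\mS$, which rests on $\Range{\mS^\top\mW\mS}=\Range{\mS^\top}$ for $\mW\succ 0$ (your first, parenthetical range claim is stated as an equality where only an inclusion of the row space of $(\Jac-\mJ^k)\mS$ in $\Range{\mS^\top}$ holds, but your ``more precisely'' formulation is the correct one and is what the argument actually uses). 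This consistency check is exactly the step the paper's least-norm argument sidesteps, so your route trades one pseudoinverse fact for another. For the equivalence of the two formulations, the paper gives a genuine duality argument: it forms the Lagrangian of \eqref{eq:jacupdateFW}, uses strong duality, and completes the square to exhibit \eqref{eq:jacupdatedual} as the dual problem; you instead either substitute the constraint of \eqref{eq:jacupdatedual} into its objective and observe that the resulting unconstrained quadratic in $\mY$ has the same normal equation, or note that the stationarity condition $(\mJ-\mJ^{k})\mW^{-1}=\mY\mS^\top$ (the paper's \eqref{eq:Lagjacdifffs3x}) plus feasibility is simultaneously the optimality system of both problems. Your version is shorter and avoids invoking strong duality, at the cost of not displaying the dual relationship that the paper emphasizes elsewhere (the ``sketch-and-project'' vs.\ ``constrain-and-approximate'' duality); one small point worth a sentence in a polished write-up is that the minimizing $\mY$ in \eqref{eq:jacupdatedual} need not be unique when $\mS^\top\mW\mS$ is singular, but any two solutions of the normal equation differ by $\Delta$ with $\Delta\mS^\top\mW=0$, so the resulting $\mJ^{k+1}$ is the same.
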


\begin{proof}
Let $\mZ = (\mJ-\mJ^{k}) \mW^{-1/2}$ so that~\eqref{eq:jacupdateFW} becomes
\begin{equation}\label{eq:aj8j8sdj} \mJ^{k+1}= \arg\min_{\mJ\in \R^{d \times n}} \frac{1}{2}\norm{\mZ}_{\mI}^2 \quad \mbox{subject to} \quad  \mZ \mW^{1/2} \mS = (\Jac   -\mJ^{k}) \mS.\end{equation}
It follows from one of the properties of pseudoinverse\footnote{The least norm solution to $\mA \mX =\mB$ is given by $\mX = \mA^\dagger \mB$.} that the least norm solution of the above is given by
$\mZ = (\Jac   -\mJ^{k}) \mS (\mW^{1/2} \mS)^\dagger.$
Substituting $\mZ = (\mJ -\mJ^{k}) \mW^{-1/2}$, multiplying on the right by $\mW^{1/2}$ gives
\begin{equation} \label{eq:japseisd}
 \mJ  = \mJ^{k} + (\Jac   -\mJ^{k}) \mS (\mW^{1/2} \mS)^\dagger \mW^{1/2}.
\end{equation}
Now it remains to use  another  pseudoinverse property:  $\mA^\dagger = (\mA^\top \mA)^\dagger \mA^\top$. We use it in \eqref{eq:japseisd}  with $ \mA = \mW^{1/2} \mS$,  which gives~\eqref{eq:jacobsolW}. Next we show using duality that~\eqref{eq:jacupdatedual} is equivalent to~\eqref{eq:jacupdateFW}. Consider the Lagrangian of~\eqref{eq:jacupdateFW}, namely
\begin{equation}
\label{eq:Lagjacsdfs}
L(\mJ,\mY) \eqdef \frac{1}{2}\norm{\mJ-\mJ^{k}}_{\mW^{-1}}^2 +\dotprod{ \mY, (\Jac -\mJ) \mS }
 = \frac{1}{2}\norm{\mJ-\mJ^{k}}_{\mW^{-1}}^2 +\dotprod{ \mY \mS^\top , \Jac - \mJ  }.
\end{equation}
By strong duality we know that 
$\eqref{eq:jacupdateFW} =\min_{\mJ \in \R^{d \times n}} \max_{\mY \in \R^{d \times \tau}} L(\mJ, \mY) =  
\max_{\mY \in \R^{d \times \tau}} \min_{\mJ \in \R^{d \times n}}  L(\mJ, \mY).
$ Now we will show that $\max_{\mY \in \R^{d \times \tau}} \min_{\mJ \in \R^{d \times n}}  L(\mJ, \mY) = \eqref{eq:jacupdatedual}.$
Differentiating $L(\mJ, \mY)$ in $\mJ$ and setting it to zero,
\begin{equation}\label{eq:Lagjacdifffs3x} 
\mY \mS^\top  = (\mJ - \mJ^{k}) \mW^{-1}. \end{equation}
Substituting~\eqref{eq:Lagjacdifffs3x} into~\eqref{eq:Lagjacsdfs} gives
\begin{eqnarray*}
L(\mJ, \mY) & =& \frac{1}{2}\norm{\mJ-\mJ^{k}}_{\mW^{-1}}^2 +\dotprod{ (\mJ-\mJ^{k}) \mW^{-1}, \Jac + \mJ^{k} - \mJ^k - \mJ  } \\
&= & -\frac{1}{2}\norm{\mJ-\mJ^{k}}_{\mW^{-1}}^2 -\dotprod{ (\mJ-\mJ^{k}) \mW^{-1}, \mJ^{k}- \Jac}.
\end{eqnarray*}
Adding and subtracting to the right hand side $\frac{1}{2}\norm{\mJ^{k} -\Jac}_{\mW^{-1}}^2$ and completing the square gives
\[L(\mJ, \mY)  = -\frac{1}{2}\norm{\mJ-\mJ^{k}+(\mJ^{k} -\Jac) }_{\mW^{-1}}^2 +\frac{1}{2}\norm{\mJ^{k} -\Jac}_{\mW^{-1}}^2\\
=  -\frac{1}{2}\norm{\mJ -\Jac }_{\mW^{-1}}^2 +\frac{1}{2}\norm{\mJ^{k} -\Jac}_{\mW^{-1}}^2.
\] Keeping in mind the constraint~\eqref{eq:Lagjacdifffs3x},  maximizing the above over $\mY$  gives~\eqref{eq:jacupdatedual}. \hfill \qed
\end{proof}

\section{Proof of Theorem~\ref{thm:rhobnbcollect}} \label{app:rho}

First we will establish that
	\begin{equation} \label{eq:asdo81n7}
	\frac{|\cal G|}{c_1^2}\sum_{C \in \cG} e_C e_C^\top \mW
	=\frac{|\cG|c_2}{c_1^2}\left(
	\begin{array}{@{}ccccc@{}}
	\frac{c_1}{c_2} w_1 & w_2 & \cdots & \frac{}{}w_{n-1} & w_{n}\\
	w_{1} & \frac{c_1}{c_2}w_2 & \cdots &  w_{n-1} &  w_{n} \\
	\vdots & & \ddots &  & \vdots \\
	w_{1} & \cdots &   & \frac{c_1}{c_2}w_{n-1} &  w_{n}  \\
	w_{1} &  w_{2} & \cdots & w_{n-1} & \frac{c_1}{c_2}w_n
	\end{array}
	\right).
	\end{equation}
	Indeed,  for every $i $ we have that
	$e_i^\top \frac{|\cal G|}{c_1^2} \left(\sum_{C \in \cG}  e_C e_C^\top \mW \right)e_i= w_i \frac{|\cal G|}{c_1^2} \sum_{C \in \cG\, : \, i \in C} 1 = w_i \frac{|\cal G|}{c_1},$
	and for every $i \neq j$ we have 
	$e_i^\top\frac{|\cal G|}{c_1^2} \left(\sum_{C \in \cG}  e_C e_C^\top \mW \right) e_j= w_j \frac{|\cal G|}{c_1^2}\sum_{C \in \cG\, : \, i,j \in C} 1 = w_j \frac{| \cG | c_2}{c_1^2}.$	Using~\eqref{eq:asdo81n7},~\eqref{eq:theorhoXX2} and the Gershgorin circle theorem to bound $\rho$ from above we get
$
	\rho \leq  \max_i \left\{ \left( \frac{|\cal G|}{c_1}  -1\right) w_i+\sum_{i\neq j} w_j \left|\frac{| \cG | c_2}{c_1^2}-1\right|\right\},
$
as claimed.	When $\mW=\mI$ we can get tighter results by using that $\left(\frac{|\cal G|}{c_1^2}\sum_{C \in \cG} e_C e_C^\top -\ones \ones^\top\right)$ is a 
	\emph{circulant matrix} with associated vector 
$
	v =  \left(\frac{| \cG | }{c_1}-1 ,\frac{| \cG | c_2}{c_1^2}-1, \ldots, \frac{| \cG | c_2}{c_1^2}-1\right) \in \R^n.
$ There is an elegant formula for calculating eigenvalues $\lambda_j$ of circulant matrices~\cite{varga1954} using $v$, given by
	\begin{equation}\label{eq:radeeigs}
	\lambda_{j} = v_1+ \sum_{k=1}^{n-1} \omega_j^{k} v_{n-k+1} = \frac{| \cG | }{c_1}-1 +\left( \frac{| \cG | c_2}{c_1^2}-1\right)\sum_{k=1}^{n-1} \omega_j^{k}, \quad \mbox{for }j=0,\ldots, n-1,
	\end{equation}
	where $\omega_j =e^{\frac{2 \pi \iu j}{n} }$ are the $n$-th roots of unity and $\iu$ is the imaginary number. From~\eqref{eq:radeeigs} we see that there are only two distinct eigenvalues. Namely, for  $j =0$ we have
	\begin{equation*}
	\label{eq:lambda0circ} \lambda_0  \overset{\eqref{eq:radeeigs}}{=} \frac{| \cG | }{c_1}-1 +\left( \frac{ | \cG | c_2}{c_1^2}-1\right)(n-1) = \frac{| \cG | }{c_1}\left(1+ (n-1) \frac{c_2}{c_1}\right) -n.
	\end{equation*}
	The other eigenvalue is given by any $j \neq 0$ since
	\begin{equation*}
	\label{eq:lambdajcirc} 
	\lambda_j  \overset{\eqref{eq:radeeigs}}{=} \frac{| \cG | }{c_1}-1  -\left( \frac{| \cG | c_2}{c_1^2}-1\right)+ \left( \frac{| \cG | c_2}{c_1^2}-1\right)\underbrace{\sum_{k=0}^{n-1} \omega_j^{k}}_{=0} =\frac{| \cG | }{c_1}\left(1  - \frac{c_2}{c_1}\right).
	\end{equation*}

 \clearpage
\section{Notation Glossary} \label{sec:notation_glossary}

{\footnotesize

\begin{table}[!h]
\begin{center}
\begin{tabular}{|c|l|c|}
 \hline
 $f(x)$ & $\tfrac{1}{n}\sum_{i=1}^n f_i(x)$ (convex loss function $f:\R^d\to \R$) & \eqref{eq:prob}\\
  $x^*$ & minimizer of $f$ & \eqref{eq:prob}\\ 
  $\mu$ & strong convexity constant of $f$ & Tab~\ref{tbl:complexity_summary} \& Assum~\ref{ass:str_convex_at_solution} \& Thm~\ref{theo:convpart}\\
  $\alpha$ & stepsize & \eqref{eq:xupdate} \\
$g^k$ & stochastic estimator of $\nabla f(x^k)$  & \eqref{eq:xupdate}, \eqref{eq:g^k=intro}, \eqref{eq:g^k-SAGA}, \eqref{eq:unbiasedgrad} \\
 $[n]$ & $\{1,2,\dots,n\}$ & \\
 $F(x)$ & $(f_1(x),\ldots, f_n(x))^\top\in \R^n$ (function $F:\R^d\to \R^n$) & \eqref{eq:F(x)} \\
  $\Jac(x)$ & $[\nabla f_1(x),\ldots, \nabla f_n(x)] \in \R^{d\times n}$ (Jacobian of $F$ at $x$) & \eqref{eq:jac_def}\\
  $\ones$ & $(1,1,\dots,1)^\top \in \R^n$  (vector of all ones) &
\eqref{eq:ubif98gf8}   \\  
  $f^*$ / $f^k$ & shorthand for $f(x^*)$ / $f(x^k)$ & \\
  $\mW$ & $n\times n$ symmetric positive definite ``weight'' matrix &  \eqref{eq:fro_norm}, \eqref{eq:PSdef} \\
$\norm{\mX}_{\mW^{-1}}$ & $(\Tr{\mX \mW^{-1} \mX^\top})^{1/2}$  (weighted Frobenius norm) & \eqref{eq:fro_norm}\\
  $\mS$ & a random (sketching) $n\times \tau$ matrix picked from $\cD$ & \\
  $\Proj_\mS$ & $\mS (\mS^\top \mW \mS)^{\dagger} \mS^\top  \mW$  (stochastic projection matrix) & \\
  $\theta_{\mS}$ & bias-correcting random variable &  \eqref{eq:bias-corr-eq-intro} \& Assum~\ref{def:sketch}\\
$\ED{\cdot}$ & $ \EE{\mS \sim \cD}{\cdot}$ (expectation over $\mS\sim \cD$) &  \\
$S$ or $S_k$  & sampling (a random subset of $[n]$) & \\
$\tau$ & $\E{|S|}$  (minibatch size) & \\
$C$  & subset of $[n]$ & \\
$e_C$ & $\sum_{i\in C} e_i$ ($e_i$ is the $i$th unit coordinate vector in $\R^d$) & \\
$p_C$ / $p_i$   & $\Prb{S=C}$ / $\Prb{i\in S}$ & Sec \ref{sec:SAGA-intro}, \ref{sec:minibatch_sketches}\\
$\mI_C$ & column submatrix of $\mI$ with columns indexed by $C$  &  Sec~\ref{sec:minibatch_sketches} \& Thm~\ref{theo:convpart} \\
$\cG=\support(S)$ & $\{C \subseteq [n] \;:\; p_C>0\}$  (support of sampling $S$) & Sec~\ref{sec:minibatch_sketches}\\
$f_C$ & $\frac{1}{|C|}\sum_{i \in C} f_i$  (subsampled loss function) & Sec~\ref{sec:minibatch_sketches} \& Thm~\ref{theo:convpart}, \ref{lem:two_ineqXXX} \\
  $L_C$ & smoothness constant of $f_C$ & Sec~\ref{sec:intro-summary-of-results}, \ref{sec:98hs98hs8gh} \& Thm~\ref{theo:convpart}, \ref{lem:two_ineqXXX}\\
  $L_i$ & smoothness constant of $f_i$ & Sec~\ref{sec:intro-summary-of-results}, \ref{sec:98hs98hs8gh}\\
 $L_{\max}$ & $\max_i L_i$ & Sec~\ref{sec:intro-summary-of-results}, \ref{sec:98hs98hs8gh} \& Thm~\ref{lem:two_ineqXXX}\\
$L$ & smoothness constant of $f = \frac{1}{n}\sum_i f_i$ & Sec~\ref{sec:intro-summary-of-results}, \ref{sec:98hs98hs8gh} \& Thm~\ref{lem:two_ineqXXX} \\
$\bar{L}$ & $  \tfrac{1}{n}\sum_{i=1}L_i$  & Sec~\ref{sec:intro-summary-of-results}, \ref{sec:98hs98hs8gh} \& Thm~\ref{lem:two_ineqXXX}\\
 $\cL_1$ & Expected smoothness constant of the stochastic gradient & Assum~\ref{ass:ES1} \& Thm~\ref{theo:convgen} \\
 $\cL_2$ &  Expected smoothness constant of the Jacobian & Assum~\ref{ass:ES2} \& Thm~\ref{theo:convgen} \\ 
 $\LGi$ & $ \tfrac{1}{c_1}\sum_{C \; :\; C \in \cG, \, i \in C} L_C$ & \\  
  $\LGconst$ & $\max_{i} \LGi$  (= $\cL_1$ for  $\tau$--uniform $S$ with $c_1$--uniform support) & Sec~\ref{sec:intro-summary-of-results}, \ref{sec:98hs98hs8gh}  \& Thm~\ref{lem:Lfhatuni}, \ref{lem:two_ineqXXX}\\  
$\kappa$ & Stochastic condition number &  Sec~\ref{sec:kappa} \& Lem~\ref{lem:stochcondition} \& Thm~\ref{theo:convgen} \\
$\rho$ & Sketch residual & \eqref{eq:rhointro} \& Thm~\ref{theo:convgen} \& Lem~\ref{lem:gradient_bounddeltaXX} \\
$\Psi^k$ / $\Psi_S^k$ & Lyapunov function / stochastic Lyapunov function & \eqref{eq:lyapgen} /  \eqref{eq:stochlyap} \\  
  $c_1 $ & $|\{C \, : \, C \in \support(S), \, 1 \in C\}|$ & Def~\ref{def:unif_support} \\
  $c_2$ &  $|\{C \, : \, C \in  \support(S), \, 1\in C; 2\in C\}|$ & \eqref{eq:c_2_xxx} \\
  \hline
\end{tabular}

\end{center}

\caption{Frequently used notation.}
\label{tbl:notation}
\end{table}

}

\end{document}